\begin{document}
 	\newcommand{\dbar}{\ensuremath{\overline\partial}}
 	\newcommand{\dbarstar}{\ensuremath{\overline\partial^*}}
 	\newcommand{\de}{\ensuremath{\partial}}
 	\newcommand{\C}{\ensuremath{\mathbb{C}}}
 	\newcommand{\R}{\ensuremath{\mathbb{R}}}
 	\newcommand{\D}{\ensuremath{\mathbb{D}}}
 	\newcommand{\T}{\ensuremath{\mathbb{T}}}
 	
 	\makeatletter
 	\newcommand{\sumprime}{\if@display\sideset{}{'}\sum%
 		\else\sum'\fi}
 	\makeatother
 	
 	\numberwithin{equation}{section}
 	
 	\newtheorem{theorem}{Theorem}[section]
 	\newtheorem{proposition}[theorem]{Proposition}
 	\newtheorem{conjecture}[theorem]{Conjecture}
 	\def\theconjecture{\unskip}
 	\newtheorem{corollary}[theorem]{Corollary}
 	\newtheorem{lemma}[theorem]{Lemma}
 	\newtheorem{observation}[theorem]{Observation}
 	\theoremstyle{definition}
 	\newtheorem{definition}{Definition}
 	\numberwithin{definition}{section}
 	\newtheorem{remark}{Remark}
 	\def\theremark{\unskip}
 	\newtheorem{question}{Question}
 	\def\thequestion{\unskip}
 	\newtheorem{example}{Example}
 	\def\theexample{\unskip}
 	\newtheorem{problem}{Problem}
 	
 	\def\vvv{\ensuremath{\mid\!\mid\!\mid}}
 	\def\intprod{\mathbin{\lr54}}
 	\def\reals{{\mathbb R}}
 	\def\integers{{\mathbb Z}}
 	\def\N{{\mathbb N}}
 	\def\complex{{\mathbb C}\/}
 	\def\CP{{\mathbb{CP}}\/}
 	\def\P{{\mathbb P}\/}
 	\def\dist{\operatorname{dist}\,}
 	\def\spec{\operatorname{spec}\,}
 	\def\interior{\operatorname{int}\,}
 	\def\trace{\operatorname{tr}\,}
 	\def\fs{\operatorname{\small FS}\,}
 	\def\I{\operatorname{I}}
 	\def\cl{\operatorname{cl}\,}
 	\def\essspec{\operatorname{esspec}\,}
 	\def\range{\operatorname{\mathcal R}\,}
 	\def\kernel{\operatorname{\mathcal N}\,}
 	\def\dom{\operatorname{dom}\,}
 	\def\linearspan{\operatorname{span}\,}
 	\def\lip{\operatorname{Lip}\,}
 	\def\sgn{\operatorname{sgn}\,}
 	\def\Z{ {\mathbb Z} }
 	\def\eps{\varepsilon}
 	\def\p{\partial}
 	\def\rp{{ ^{-1} }}
 	\def\Re{\operatorname{Re\,} }
 	\def\Im{\operatorname{Im\,} }
 	\def\dbarb{\bar\partial_b}
 	\def\eps{\varepsilon}
 	\def\Lip{\operatorname{Lip\,}}
 	\def\trace{\operatorname{tr}\,}
 	\def\df{\operatorname{DF}}
 	\def\fs{\operatorname{FS}}
 	\def\ec{\operatorname{E}}

 	\def\Hs{{\mathcal H}}
 	\def\E{{\mathcal E}}
 	\def\scriptu{{\mathcal U}}
 	\def\scriptr{{\mathcal R}}
 	\def\scripta{{\mathcal A}}
 	\def\scriptc{{\mathcal C}}
 	\def\scriptd{{\mathcal D}}
 	\def\scripti{{\mathcal I}}
 	\def\scriptk{{\mathcal K}}
 	\def\scripth{{\mathcal H}}
 	\def\scriptm{{\mathcal M}}
 	\def\scriptn{{\mathcal N}}
 	\def\scripte{{\mathcal E}}
 	\def\scriptt{{\mathcal T}}
 	\def\scriptr{{\mathcal R}}
 	\def\scripts{{\mathcal S}}
 	\def\scriptb{{\mathcal B}}
 	\def\scriptf{{\mathcal F}}
 	\def\scriptg{{\mathcal G}}
 	\def\scriptl{{\mathcal L}}
 	\def\scripto{{\mathfrak o}}
 	\def\scriptv{{\mathcal V}}
 	\def\frakg{{\mathfrak g}}
 	\def\frakG{{\mathfrak G}}
 	
 	\def\ov{\overline}
	\newcommand{\hb}{\mathbb H}
	\newcommand{\cx}{\mathbb C}
 	\newcommand{\ol}{\overline}
 	
 	\author{Mei-Chi Shaw}
 	
 	\thanks
 	{The author  is supported in part by NSF grant. She would also like to thank   Professor Christine Laurent-Thi\'ebaut  and the referee for helpful comments.}

 	\address{Department of Mathematics, University of Notre Dame, Notre Dame, IN 46556}
 	\email{Mei-Chi.Shaw.1@nd.edu}

 	\title[]  
 	{   $L^2$-Sobolev Theory for $\dbar$ on  Domains  in $\CP^n$}

 	\begin{abstract} In this article, we study  the range  of  the Cauchy-Riemann  operator $\dbar$   on    domains in the complex projective space $\CP^n$.  In particular, we show that $\dbar$ does not have closed range in $L^2$  for   (2,1)-forms  on the Hartogs triangle in $\CP^2$.  We also study the $\dbar$-Cauchy problem on pseudoconvex domains and use it to prove the Sobolev estimates for  $\dbar$ on pseudoconcave domains in $\CP^n$. 
 	 
	\end{abstract}	\maketitle
 	\bigskip

 \centerline{\it Dedicated to the memory of  Professor Joseph J. Kohn}

 \bigskip

 	\noindent{{\sc Mathematics Subject Classification} (2010): 32W05, 35N15.}
 	
 	\smallskip
 	
 	\noindent{{\sc Keywords}: Cauchy-Riemann operator;  Hartogs triangle,  complex projective space}

\section{Introduction}

 Since the fundamental work of Kohn for the $\dbar$-Neumann problem on smooth bounded strongly pseudoconvex 
domains in $\C^n$,   there has been tremendous progress on $L^2$-Sobolev theory of the
$\dbar$-operator and the $\dbar$-Neumann problem for bounded pseudoconvex domains in $\C^n$.
In particular, Kohn proved the following  landmark  results (see  \cite{Kohn63,Kohn64}).  
 
 \begin{theorem}[\textbf{Kohn 1963}] \label{th:Kohn63}Let $\Omega$ be bounded  strongly pseudoconvex domain with  smooth  boundary  in   $\C^n$, $n\ge 2$.  
Then the $\dbar$-Neumann operator 
$$N:L^2_{p,q}(\Omega)\to L^2_{p,q}(\Omega)$$ exists on $\Omega$. 
Furthermore,         the following sub-elliptic estimates hold for any $s\ge 0$
 $$\|Nf\|_{s+1}\le C \|f\|_s$$   
 and  $$\|\dbar^* Nf\|_{s+\frac 12}\le C \|f\|_s$$  
where $\|\ \|_s$ denotes  the Sobolev $s$-norm $W^s(\Omega)$. \end{theorem}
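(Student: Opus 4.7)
The strategy is the classical Hilbert space approach via the energy form, followed by subelliptic estimates. First I would set up $\dbar$ and $\dbar^*$ as closed densely defined operators on $L^2_{p,q}(\Omega)$ and introduce the sesquilinear form
\[
Q(u,v)=(\dbar u,\dbar v)+(\dbar^* u,\dbar^* v)
\]
on the domain $\scriptd=\dom(\dbar)\cap\dom(\dbar^*)$. The $\dbar$-Neumann operator $N$ is to be produced by inverting the Friedrichs extension of $Q$; the key analytic input is a coercive a~priori estimate for $Q$.

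The second step is the Morrey--Kohn--H\"ormander identity. Integrating by parts twice against an orthonormal frame adapted to the boundary, one writes, for $u\in\scriptd\cap C^\infty_{p,q}(\ov\Omega)$,
\[
\|\dbar u\|^2+\|\dbarstar u\|^2=\sum_{I,J,k}\|\ov{\p}_k u_{I,J}\|^2+\int_{b\Omega}\scriptl(u,u)\,dS+\text{l.o.t.},
\]
where $\scriptl$ is the Levi form and lower order means $O(\|u\|\cdot\|u\|_1+\|u\|^2)$. Strong pseudoconvexity forces $\scriptl$ to be positive definite, so this identity yields a positive boundary contribution. Combining with interior $L^2$ bounds on barred derivatives, I obtain the basic estimate $\|u\|^2\le CQ(u,u)$, and more importantly the subelliptic $\tfrac12$-estimate
\[
\|u\|_{1/2}^{\,2}\le CQ(u,u),\qquad u\in\scriptd.
\]
Extending this to all $u\in\scriptd$ requires Kohn's elliptic regularization $Q_\delta(u,v)=Q(u,v)+\delta(u,v)_1$ to justify the boundary integration by parts, followed by a passage $\delta\to 0$.

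With the coercive and subelliptic estimates in hand, the Lax--Milgram theorem produces a bounded $N:L^2_{p,q}(\Omega)\to L^2_{p,q}(\Omega)$ satisfying $Q(Nf,v)=(f,v)$ for $v\in\scriptd$, hence $\Box Nf=f$. The $\tfrac12$-estimate immediately upgrades this to $\|Nf\|_1\le C\|f\|$ and $\|\dbarstar Nf\|_{1/2}\le C\|f\|$, which is the case $s=0$. For general $s$, I would bootstrap by applying tangential pseudodifferential operators $\Lambda^s_t$, proving the commuted estimate
\[
\|\Lambda^s_t u\|_{1/2}^{\,2}\le C\bigl(Q(\Lambda^s_t u,\Lambda^s_t u)+\|u\|_s^{\,2}\bigr),
\]
and using noncharacteristicity of $\Box$ to trade tangential smoothness for full Sobolev smoothness in the normal direction, yielding $\|Nf\|_{s+1}\le C\|f\|_s$ and $\|\dbarstar Nf\|_{s+1/2}\le C\|f\|_s$.

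The main obstacle is the proof of the subelliptic $\tfrac12$-estimate together with its tangential pseudodifferential version: the boundary integration by parts is only formal on $\scriptd$, so the elliptic regularization and the commutator analysis between $\Lambda^s_t$, $\dbar$, and $\dbarstar$ (including the boundary condition $u\in\dom(\dbarstar)$, which forces the normal component to vanish) must be handled delicately in boundary normal coordinates. Once these commutators are controlled, the Sobolev estimates follow by iteration.
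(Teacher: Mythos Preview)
The paper does not prove this theorem: it is quoted in the introduction as a landmark background result, with a citation to Kohn's original papers \cite{Kohn63,Kohn64} (and expositions such as \cite{FollandKohn,ChenShaw01}). So there is no ``paper's own proof'' to compare against.

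That said, your outline is the standard route and is essentially the argument in Kohn's papers and in the monograph expositions. A couple of small remarks. First, before you can pass from the identity on $C^\infty_{p,q}(\ov\Omega)\cap\scriptd$ to all of $\scriptd$ you need the density of smooth forms satisfying the $\dbar$-Neumann boundary condition in $\scriptd$ for the graph norm of $Q$; this is a separate (Friedrichs-type) lemma, distinct from elliptic regularization, and it is where the smoothness of $b\Omega$ is used. Second, the basic $L^2$ estimate $\|u\|^2\le CQ(u,u)$ does not follow directly from the positivity of the Levi form alone (the identity controls only barred derivatives and a boundary integral); one typically adds a small multiple of $\|u\|^2$ or uses a compactness/Rellich argument to absorb the lower-order terms, and the resulting constant may depend on $\Omega$. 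With those two points filled in, the rest of your plan (Lax--Milgram for $N$, the $\tfrac12$-gain, and the tangential $\Lambda^s_t$ bootstrap with commutator control and recovery of normal regularity from the noncharacteristic direction) is exactly how the subelliptic estimates are established.
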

The solution  $\dbar^*Nf$ is called the {\it canonical solution} (or Kohn's solution), since it is the energy minimizing solution.

\begin{corollary} 
  Let     $f\in C^\infty_{p,q}( \overline \Omega)$ with $\dbar  f=0$ in $\Omega$, where $0\le p\le n$ and $1\le q\le n$.  There exists  $u= \dbar^* N f \in  C^\infty_{p,q-1}(\overline \Omega)$ satisfying 
$\dbar u=f$ in $\Omega$.  \end{corollary}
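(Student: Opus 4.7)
The plan is to deduce the corollary directly from Theorem \ref{th:Kohn63} in two moves: first verify that $u=\dbar^{*}Nf$ really solves $\dbar u=f$ whenever $\dbar f=0$, and then upgrade the Sobolev bounds provided by Kohn's subelliptic estimate to $C^{\infty}$ regularity up to the boundary via Sobolev embedding.

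For the first move, I would invoke the $L^{2}$ Hodge-type decomposition that comes packaged with the existence of $N$: for any $f\in L^{2}_{p,q}(\Omega)$ with $1\le q\le n$ one has
\[
f \;=\; \dbar\dbar^{*}Nf \;+\; \dbar^{*}\dbar Nf,
\]
since $\Box Nf=f$ and $N$ preserves the relevant domain conditions. The task is then to show that the second summand vanishes when $\dbar f=0$. Pairing $\dbar^{*}\dbar Nf$ with itself and using $\dbar^{2}=0$, one can rewrite the inner product so that only $\dbar f$ appears; alternatively one notes that $\dbar$ commutes with $N$ on the kernel of $\dbar$ in the appropriate sense, forcing $\dbar^{*}\dbar Nf=0$. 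Either way, $\dbar u=f$ holds in $\Omega$ with $u=\dbar^{*}Nf$.

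For the second move, the hypothesis $f\in C^{\infty}_{p,q}(\overline{\Omega})$ gives $f\in W^{s}(\Omega)$ for every $s\ge 0$. Feeding this into Kohn's subelliptic estimate $\|\dbar^{*}Nf\|_{s+\frac12}\le C_{s}\|f\|_{s}$ shows that $u=\dbar^{*}Nf$ lies in $W^{s+\frac12}(\Omega)$ for every $s\ge 0$, hence in $\bigcap_{s\ge 0}W^{s}(\Omega)$. By the Sobolev embedding theorem, this intersection equals $C^{\infty}(\overline{\Omega})$, so $u\in C^{\infty}_{p,q-1}(\overline{\Omega})$.

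The only genuinely delicate point is the algebraic identity $\dbar^{*}\dbar Nf=0$ when $\dbar f=0$; everything else is a direct quotation of Theorem \ref{th:Kohn63} combined with the standard Sobolev embedding. I would therefore spend most of the write-up making that vanishing precise (for instance, by applying $\dbar$ to the Hodge decomposition of $f$, noting that $\dbar\dbar^{*}\dbar Nf\in\mathrm{Range}(\dbar)\cap\mathrm{Range}(\dbar)^{\perp}$ under the appropriate orthogonality, and concluding $\dbar^{*}\dbar Nf=0$), and then let the Sobolev step be the short concluding paragraph.
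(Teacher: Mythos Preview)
The paper does not supply a proof of this corollary; it is stated as an immediate consequence of Kohn's Theorem~\ref{th:Kohn63}, and the sentence following it merely names $\dbar^{*}Nf$ as the canonical solution. Your two-move plan is exactly the standard argument one finds in the references the paper cites (e.g.\ \cite{FollandKohn,ChenShaw01}), and it is correct.

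One small remark on the step you flag as ``delicate'': the vanishing $\dbar^{*}\dbar Nf=0$ is not really subtle once you observe that on a bounded strongly pseudoconvex domain in $\C^{n}$ the harmonic space is trivial and $\dbar$ has closed range. The Hodge decomposition $f=\dbar\dbar^{*}Nf+\dbar^{*}\dbar Nf$ is then orthogonal, with the first term in $\text{Ker}(\dbar)$ and the second in $\text{Range}(\dbar^{*})=\text{Ker}(\dbar)^{\perp}$; since $f\in\text{Ker}(\dbar)$, the second term must vanish. This is cleaner than the pairing argument you sketch and avoids any domain-of-definition bookkeeping. The Sobolev-embedding step is fine as written.
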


Another  important result  for $\dbar$ is the global regularity for $\dbar$ proved later by Kohn based on the weighted $\dbar$-Neumann problem   (see \cite{Kohn73}).

 \begin{theorem}[\textbf{Kohn 1973}] \label{th:Kohn main}Let $\Omega$ be bounded pseudoconvex domain with  smooth  boundary  in   $\C^n$, $n\ge 2$.  
  Let     $f\in W^s_{p,q}( \Omega)$ with $\dbar  f=0$ in $\Omega$, where $0\le p\le n$ and $1\le q\le n$ and  $s> 0$,  there exists  $u_s\in  W^s_{p,q-1}(\Omega)$ satisfying 
$\dbar u_s=f$ in $\Omega$.  \end{theorem}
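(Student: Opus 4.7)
The plan is to follow Kohn's weighted $\dbar$-Neumann approach. Fix a strictly plurisubharmonic function $\phi$ on $\overline\Omega$ (for instance $\phi(z)=|z|^2$), and for each $t\ge 0$ introduce the weighted inner product
$$
\langle u,v\rangle_t=\int_\Omega \langle u,v\rangle\, e^{-t\phi}\, dV
$$
on $L^2_{p,q}(\Omega)$. Let $\dbar^*_t$ be the Hilbert-space adjoint of $\dbar$ with respect to $\langle\cdot,\cdot\rangle_t$, and let $\square_t=\dbar\dbarstar_t+\dbarstar_t\dbar$ be the weighted complex Laplacian acting on $(p,q)$-forms.

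The first step is the weighted basic estimate: by Morrey-Kohn-H\"ormander, for each $q\ge 1$ there exists a constant $C>0$ such that
$$
t\, \|u\|_t^2\le C\bigl(\|\dbar u\|_t^2+\|\dbarstar_t u\|_t^2\bigr),\qquad u\in\dom(\dbar)\cap\dom(\dbarstar_t),
$$
for $t$ sufficiently large (depending on the lower bound of the Levi form of $\phi$, together with the pseudoconvexity of $b\Omega$, which discards the boundary term). From this it follows that $\square_t$ has a bounded inverse $N_t$ on $L^2_{p,q}(\Omega)$, and that for every $\dbar$-closed $f\in L^2_{p,q}(\Omega)$ the form $u=\dbarstar_t N_t f$ solves $\dbar u=f$ with $\|u\|_t^2\le t^{-1}C\|f\|_t^2$.

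The second, and main, step is to upgrade this to Sobolev regularity. I would use the tangential-derivative method together with elliptic regularization \`a la Kohn-Nirenberg: one replaces $\square_t$ by the elliptic operator $\square_t+\delta\,\Lambda$ (for a small parameter $\delta>0$ and a second-order elliptic boundary operator $\Lambda$ adapted to $\dom(\dbarstar_t)$), solves this problem in $W^s$, differentiates the identity $(\square_t+\delta\Lambda)u_\delta=f$ tangentially up to order $s$, commutes the tangential derivatives $D^\alpha_\tau$ through $\dbar$ and $\dbarstar_t$, and integrates by parts. The commutator $[\dbarstar_t,D^\alpha_\tau]$ introduces error terms of the form $t\,(\de\phi)\cdot D^\alpha_\tau u$ and lower-order contributions involving tangential derivatives of $u$ of order $\le s$; schematically
$$
\|D^\alpha_\tau u\|_t^2\le C_s\bigl(\|D^\alpha_\tau f\|_t^2+\|u\|_s^2\bigr)+\frac{K_s}{t}\|D^\alpha_\tau u\|_t^2 .
$$
Choosing $t=t(s)$ large enough that $K_s/t<1/2$ absorbs the bad term on the right, and standard non-characteristic arguments (using $\dbar u=f$ and the ellipticity of $\square_t$ in the normal direction on $\dom(\square_t)$) recover normal derivatives from tangential ones. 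Letting $\delta\downarrow 0$ and passing to the limit yields $\|\dbarstar_t N_t f\|_s\le C_s\|f\|_s$ for $t\ge t(s)$.

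Finally, given $f\in W^s_{p,q}(\Omega)$ with $\dbar f=0$, choose $t\ge t(s)$ and set $u_s=\dbarstar_t N_t f$. Then $\dbar u_s=\dbar\dbarstar_t N_t f=f-\dbarstar_t\dbar N_t f=f$ (the last equality because $\dbar N_t f$ solves $\square_t(\dbar N_t f)=\dbar f=0$ and hence vanishes by injectivity of $N_t^{-1}$), and by the estimate above $u_s\in W^s_{p,q-1}(\Omega)$, as required. The decisive obstacle is the Sobolev estimate in the second step: one must track carefully how the commutators of tangential derivatives with $\dbarstar_t$ interact with the weight so that the large parameter $t$ genuinely dominates the error terms, noting that the exponent $s$ dictates how large $t$ must be chosen and that consequently the solutions $u_s$ for different $s$ need not coincide.
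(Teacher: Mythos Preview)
The paper does not supply its own proof of this theorem: it is quoted in the introduction as a classical result of Kohn \cite{Kohn73} and used as background, so there is no in-paper argument to compare against. Your sketch is a faithful outline of Kohn's original weighted $\dbar$-Neumann method (weighted Morrey--Kohn--H\"ormander estimate, elliptic regularization \`a la Kohn--Nirenberg, tangential differentiation with the large parameter $t=t(s)$ absorbing commutator errors), and the logical structure---including the observation that the solutions $u_s$ depend on $s$ through the choice of $t$---is correct.
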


  \begin{corollary}\label{co:global Kohn} Let $\Omega$ be bounded pseudoconvex domain with  smooth boundary  in   $\C^n$, $n\ge 2$.  
  Let     $f\in C^\infty_{p,q}(\overline \Omega)$ with $\dbar  f=0$ in $\Omega$, where $0\le p\le n$ and $1\le q\le n$. Then   there exists $u\in C^\infty_{p,q-1}(\overline \Omega)$ satisfying 
$\dbar u=f$ in $\Omega$.  \end{corollary}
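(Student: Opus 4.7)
The plan is to deduce this corollary from Theorem \ref{th:Kohn main} by a Mittag-Leffler construction that patches the family of Sobolev-regular solutions into one smooth solution. Since $f\in C^\infty_{p,q}(\overline\Omega)\subset W^s_{p,q}(\Omega)$ for every $s$, Theorem \ref{th:Kohn main} produces, for each integer $s\ge 1$, a solution $u_s\in W^s_{p,q-1}(\Omega)$ of $\dbar u_s=f$, with $\|u_s\|_s\le C_s\|f\|_s$ by the open mapping theorem applied to $\dbar$ acting between the appropriate Banach spaces. The Sobolev embedding theorem gives $C^\infty(\overline\Omega)=\bigcap_{s\ge 0} W^s(\Omega)$, so it suffices to construct one $u$ that solves $\dbar u=f$ and lies in every $W^s_{p,q-1}(\Omega)$.

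The difficulty is that the $u_s$ generally disagree; however, any two such solutions differ by a $\dbar$-closed form, so the discrepancies can be absorbed into the non-uniqueness of the $\dbar$-equation. I would construct inductively a sequence $\tilde u_k\in W^k_{p,q-1}(\Omega)$ with $\dbar\tilde u_k=f$ and $\|\tilde u_{k+1}-\tilde u_k\|_{W^k(\Omega)}\le 2^{-k}$ as follows. Set $\tilde u_1=u_1$. Having chosen $\tilde u_k$, the form $u_{k+1}-\tilde u_k$ is $\dbar$-closed and lies in $W^k$. When $q\ge 2$, apply Theorem \ref{th:Kohn main} once more, at bidegree $(p,q-1)$, to find $v_{k+1}\in W^k_{p,q-2}(\Omega)$ with $\dbar v_{k+1}=u_{k+1}-\tilde u_k$; then replace $v_{k+1}$ by a smooth approximant $\eta_{k+1}\in C^\infty_{p,q-2}(\overline\Omega)$ chosen close enough that $\|u_{k+1}-\tilde u_k-\dbar\eta_{k+1}\|_{W^k}\le 2^{-k}$, and put $\tilde u_{k+1}=\tilde u_k+(u_{k+1}-\tilde u_k-\dbar\eta_{k+1})$. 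The telescoping series $u=\tilde u_1+\sum_{k\ge 1}(\tilde u_{k+1}-\tilde u_k)$ then converges in every $W^s$-norm, giving $u\in\bigcap_s W^s_{p,q-1}(\Omega)=C^\infty_{p,q-1}(\overline\Omega)$ with $\dbar u=f$.

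The main obstacle is the case $q=1$: there is no lower bidegree to descend to, and $u_{k+1}-\tilde u_k$ is a $\dbar$-closed $(p,0)$-form, i.e., a form with holomorphic coefficients. To close the induction here I would invoke the density in $W^k$ of holomorphic $p$-forms smooth up to the boundary inside the $W^k$-holomorphic $p$-forms on $\Omega$, a Kohn-type density theorem for smooth bounded pseudoconvex domains that is itself a consequence of the weighted $\dbar$-Neumann theory underlying Theorem \ref{th:Kohn main}. Granted this density in every $W^k$, the Mittag-Leffler scheme above produces the desired smooth solution $u$ in the full range $1\le q\le n$, and the quantitative Sobolev control at each step reduces ultimately to the estimates of Theorem \ref{th:Kohn main}.
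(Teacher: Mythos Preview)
The paper does not supply its own proof of this corollary; it simply records it as a consequence of Theorem~\ref{th:Kohn main} and refers to Kohn's original paper \cite{Kohn73}. Your Mittag--Leffler telescoping argument is exactly the standard route from $W^s$-solvability to $C^\infty(\overline\Omega)$-solvability (as in \cite{Kohn73} or \cite{ChenShaw01}), including the correct identification of the extra ingredient needed when $q=1$, so your proposal is correct and matches the classical derivation the paper is invoking.
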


  For $s\ge 0$,  let  $H^{p,q}_{W^s}(\Omega)$ be the Dolbeault cohomology with Sobolev $W^s$ coefficients   defined   by 
$$H^{p,q}_{W^s}(\Omega)=\frac { \{f\in W^s_{p,q}(\Omega)\mid \dbar f=0\}}{\{f\in W^s_{p,q}(\Omega) \mid 
f=\dbar u, u\in W^{s}_{p,q-1}(\Omega)\}}.$$
When $s=0$, we also use the notation $H^{p,q}_{L^2}(\Omega)$ for the $L^2$ Dolbeault cohomology.

Similarly, we use $H^{p,q}(\Omega)$ and $H^{p,q}(\overline \Omega)$ to denote the Dolbeault cohiomology group for $(p,q)$-forms  with $C^\infty(\Omega)$ and $C^\infty(\overline \Omega)$ coefficients respectively. Using these notation, Theorem \ref{th:Kohn main}  can be formulated  as 
\begin{equation}\label{eq:Ws} H^{p,q}_{W^s}(\Omega)=0,\qquad s>0.\end{equation}
  Corollary \ref{co:global Kohn} can be written as  
\begin{equation}\label{eq:C infinity} H^{p,q}(\overline \Omega)=0\end{equation}

When $s=0$,       $L^2$ existence for $\dbar$  was proved by  H\"ormander \cite{Hormander65}  for bounded    pseudoconvex convex domains, not necessarily with smooth boundary.
   
\begin{theorem}[\textbf{H\"ormander 65}]\label{th:L2 Hormander 0} Let   $\Omega$ be a bounded pseudoconvex domain  in $\C^n$. Then   $$H^{p,q}_{L^2}(\Omega)=0,\qquad q>0.$$
\end{theorem}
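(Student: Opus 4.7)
The strategy is H\"ormander's weighted $L^2$ method. Since $\Omega$ is bounded, I would take the weight $\varphi(z)=t|z|^2$ with a large parameter $t>0$: this function is smooth, strictly plurisubharmonic on $\C^n$ with complex Hessian $t\,I$, and bounded on $\Omega$, so the norm of $L^2_{p,q}(\Omega,e^{-\varphi})$ is equivalent to that of $L^2_{p,q}(\Omega)$, and solvability in either space implies solvability in the other. This sidesteps the need for the more elaborate triple-weight construction used for unbounded domains.

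The core input is the Kohn--Morrey--H\"ormander basic estimate. For a smooth $(p,q)$-form $f$ compactly supported in $\Omega$, integration by parts against the weighted adjoint $\dbar^{*}_\varphi$ produces an identity whose dominant terms are the Levi--Hessian contribution of $\varphi$ and the $L^2$ norms of $\partial f_{IJ}/\partial \bar z_k$. With $\varphi=t|z|^2$ the Hessian term is bounded below by $tq\|f\|^2_\varphi$, yielding
$$\|f\|^2_\varphi \ \leq\ \frac{1}{tq}\bigl(\|\dbar f\|^2_\varphi+\|\dbar^{*}_\varphi f\|^2_\varphi\bigr).$$
A standard Hilbert-space duality argument (closed range theorem applied to the pair $\dbar\colon L^2_{p,q-1}\to L^2_{p,q}$ and $\dbar\colon L^2_{p,q}\to L^2_{p,q+1}$) then converts this basic inequality into $L^2$ solvability: for every $\dbar$-closed $g\in L^2_{p,q}(\Omega)$ there exists $u\in L^2_{p,q-1}(\Omega)$ with $\dbar u=g$ and $\|u\|\leq C\|g\|$, which is exactly $H^{p,q}_{L^2}(\Omega)=0$.

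Because $\Omega$ need not have smooth boundary, the integration by parts above and the density of compactly supported smooth forms in the graph norm of $(\dbar,\dbar^{*}_\varphi)$ are not automatic. I would handle this through an interior exhaustion: by Oka's characterization of pseudoconvexity, $\Omega$ admits a smooth strictly plurisubharmonic exhaustion function, whose sublevel sets for almost every value are smoothly bounded pseudoconvex domains $\Omega_\nu\Subset\Omega$ with $\bigcup_\nu\Omega_\nu=\Omega$. On each $\Omega_\nu$ the boundary terms in the integration by parts have definite sign thanks to pseudoconvexity, so the basic estimate holds and, combined with the Hilbert-space argument above (or directly with Theorem~\ref{th:Kohn63}), furnishes solutions $u_\nu$ of $\dbar u_\nu=g|_{\Omega_\nu}$ obeying a uniform bound $\|u_\nu\|_{L^2(\Omega_\nu)}\leq C\|g\|_{L^2(\Omega)}$. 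Extending each $u_\nu$ by zero to $\Omega$, a weakly convergent subsequence in $L^2_{p,q-1}(\Omega)$ produces the sought-after global solution, with the $L^2$ estimate preserved in the limit by lower semicontinuity.

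The principal obstacle is precisely this rough boundary: both the initial integration by parts and the ensuing density of smooth forms in the domain of $\dbar^{*}_\varphi$ become genuinely delicate without a defining function. The interior exhaustion together with weak $L^2$ compactness is what delivers the theorem in full generality, and is the step where H\"ormander's result moves decisively beyond Kohn's smooth-boundary framework of Theorem~\ref{th:Kohn63}.
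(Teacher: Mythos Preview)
Your argument is correct and is precisely H\"ormander's original proof: the weight $\varphi=t|z|^2$, the basic estimate with the Levi--Hessian term $tq\|f\|^2_\varphi$, the functional-analytic closed-range step, and the interior exhaustion by smoothly bounded pseudoconvex sublevel sets to handle an arbitrary boundary. The paper itself acknowledges this route in the Remark following Theorem~\ref{th:L2 Hormander}.

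However, the proof the paper actually presents (Theorem~\ref{th:L2 Hormander}) takes a genuinely different path. Rather than introducing an auxiliary weight on $\C^n$, the paper embeds $\Omega\subset\C^n\hookrightarrow\CP^n$ and works with the Fubini--Study metric $\omega$. The Bochner--Kodaira--Morrey--Kohn identity \eqref{eq:BKMKH} then has a curvature term $\Theta$ which, for $(0,q)$-forms, satisfies $\langle\Theta u,u\rangle_\omega=q(2n+1)|u|^2_\omega$; this replaces your Hessian term $tq\|f\|^2_\varphi$ and yields the estimate $\|u\|_\omega^2\le\frac{1}{q(2n+1)}\|f\|_\omega^2$ with a constant independent of the domain. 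A comparison of the Fubini--Study and Euclidean pointwise norms and volume forms (equations \eqref{eq:g2}--\eqref{eq:u e w}), followed by a scaling optimization, converts this back to a Euclidean $L^2$ bound with constant $c_{n,q}=\frac{e(n+2q+1)}{4q(2n+1)}$. Your approach is more elementary and self-contained in $\C^n$, and gives the slightly better constant $e/q$; the paper's approach is thematically aligned with its focus on $\CP^n$ and has the feature that the curvature of the ambient K\"ahler metric does the work that your artificial weight does, so no weight function or parameter $t$ enters at all.
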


 Similar  results also hold for domains in a Stein manifold.  Both  $L^2$ and Sobolev  regularity for $\dbar$   have numerous applications.  Though the $L^2$ and Sobolev theory for $\dbar$ has been studied extensively  for domains in $\C^n$,  (see 
monographs \cite{FollandKohn, Hormander90, ChenShaw01, Straube10} for expositions on the subject), much less is known for $\dbar$ on domains 
in complex manifolds which are not Stein.  

 In this paper we   present some recent  results  of the 
$L^2$ and Sobolev theory for $\dbar$ on domains  in the complex projective space $\CP^n$.  
  There are many known results on $L^2$ existence theorems for $\dbar$ on 
pseudoconvex domains in $\CP^n$. In particular, we have 
$$H^{p,q}_{L^2}(\Omega)=0 \qquad \text{for all } 0\le p\le n,\ 1\le q\le n$$ 
for any pseudoconvex domain $\Omega\subset \CP^n$  with Lipschitz boundary $b\Omega$ (see Theorem \ref{th:L2 Lipschitz}).

 One of the main results in this paper  is to show that $\dbar$ might not have closed range on some    pseudoconvex domain in $\CP^2$  if the Lipschitz condition is dropped.  In particular, Theorem \ref{th:L2 Hormander 0} does not hold for arbitrary pseudoconvex  domains in $\CP^n$.  
 The examples are given by  the Hartogs triangles in $\CP^2$ (see   Theorem \ref{th:Nonclosed 21} and Corollary \ref{co:non Hausdorff}).
  
   The Hartogs triangles in $\C^2$ and $\CP^2$  are important examples of domains which are not   Lipschitz. Hartogs triangles in $\CP^2$  are also interesting examples  in complex foliation theory. They can be viewed as  Levi-flat hypersurfaces with singularities  since they are both pseudoconvex and pseudoconcave.
Non-closed range properties for $\dbar$ on domains in complex manifolds with smooth Levi-flat boundaries have been obtained in \cite{ChakrabartiShaw15} (see also \cite{LaurentShaw15}).

The plan of the paper is as follows.
  In section~\ref{sec:FS}  we summarize  some known results for $\dbar$ in $L^2$  for pseudoconvex domains in $\CP^n$ and  give    an alternative proof for Theorem \ref{th:L2 Hormander 0}  (see Theorem \ref{th:L2 Hormander}).   In section \ref{sec:concave}, we prove the Sobolev estimates for $\dbar$ on pseudoconcave domains in $\CP^n$ using the $\dbar$-Cauchy problem with weights. 
In section \ref{sec:Hartogs 21}, we give some basic properties of holomorphic  functions and forms on the Hartogs triangles. The non-closed range property for $\dbar$ for $(2,1)$-forms with $L^2$ coefficients  on  Hartogs triangles in $\CP^2$ is proved  in section \ref{sec:nonclosed proof}. 
  Theorem \ref{th:Nonclosed 21}   is a stronger assertion of the earlier results proved in  \cite{LaurentShaw18} and \cite{BFLS22}.  The dimension of $H^{2,1}_{L^2}(\hb^+)$ is 
 not only infinite, but is uncountable.

We collect  several  open problems which 
  are related to the $L^2$ or  Sobolev estimates for $\dbar$ on  domains in $\C^n$ or  $\CP^n$  in section \ref{sec:Open Problems}.
  Since  Hartogs triangles in $\CP^2$  are both pseudoconvex and pseudconcave, Theorem \ref{th:Nonclosed 21}  also provide  examples that  $\dbar$ does not have closed $L^2$  range  on some   {\it  pseudoconcave} domains  in $\CP^2$. We remark that $L^2$ theory for even  
 pseudoconcave domains with smooth boundary  remains an open question (see Problem \ref{prob:L2 pseudoconcave}). The missing ingredient is exactly 
 the lack of  Kohn's type     Sobolev estimates \eqref{eq:Ws} for  pseudoconvex domains in $\CP^n$ (see Problem \ref{prob:Sobolev CPn}).

  \section{$L^2$ theory for $\dbar$  on pseudoconvex domains  in $\CP^n$}\label{sec:FS}
 In this section, we review some known results on pseudoconvex domains in $\CP^n$. 
Let $\omega$ be the K\"{a}hler form associated with the Fubini-Study metric in $\CP^n$.  
Let $\Omega$ be a pseudoconvex  domain  in $\CP^n$ such that $\overline \Omega\neq \CP^n$ and $\Omega$ has   $C^2$-smooth boundary $b\Omega$.  Let $\delta$ be the distance function from $z$ to $b\Omega$.    
 
 Let 
$\delta(z)=dist(z, b\Omega )$ be the distance, with respect to the Fubini-Study metric, from $z$ to the boundary $b\Omega$. Let $\Omega_\eps=\{z\in\Omega \mid \delta(z)>\eps\}$. It then follows from Takeuchi's theorem~\cite{Takeuchi64} that there exists a universal constant $K_0>0$ such that
\begin{equation}\label{eq:Takeuchi}   i\partial \dbar  (-\log \delta ) \ge K_0\omega
\end{equation}
on $\Omega$. In particular, there exists $\epsilon_0>0$ such that    
\begin{equation}\label{eq:2.1}
\partial \dbar  (-\delta)(\zeta,\overline \zeta)\ge K_0\epsilon |\zeta|^2_{\omega}
\end{equation} for all
$\zeta\in T^{1,0}_x(b\Omega_\epsilon)$ for $ 0\le \epsilon \le \epsilon_0 $.
(See also \cite{GreeneWu78, CaoShaw05} for  different proofs of Takeuchi's theorem.)   
  
  Using \eqref{eq:2.1}, we have  the following theorem.
  
  \begin{theorem}[\textbf{Takeuchi}]\label{th:Takeuchi} Let $\Omega$ be a pseudoconvex domain in $\CP^n$ such that $\overline \Omega\neq \CP^n$. Then 
  the Dolbeault cohomology group $H^{p,q}(\Omega)=0$ for all $q>0$. 
  \end{theorem}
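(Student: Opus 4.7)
The plan is to interpret Takeuchi's inequality (2.1)--(2.2) as saying that $\Omega$ carries a strictly plurisubharmonic exhaustion function, so $\Omega$ is Stein and every positive-degree Dolbeault cohomology vanishes by Cartan's Theorem B.

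First I would observe that $\delta$ is continuous on $\Omega$, tends to $0$ as $z\to b\Omega$, and is bounded below on compact subsets, so $\varphi:=-\log\delta$ is a continuous exhaustion of $\Omega$. Inequality (2.1) asserts that $i\de\dbar\varphi \ge K_0\omega$ on $\Omega$ in the sense of currents, so $\varphi$ is strictly plurisubharmonic with respect to the Fubini--Study metric. Because $\delta$ is only Lipschitz globally (though $C^2$ near $b\Omega$), I would apply a standard Richberg-type regularization to replace $\varphi$ by a smooth strictly plurisubharmonic exhaustion $\widetilde\varphi$ with $i\de\dbar\widetilde\varphi \ge \tfrac12 K_0\omega$. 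By Grauert's characterization of Stein manifolds, $\Omega$ is then Stein, and the Dolbeault isomorphism together with the vanishing of coherent analytic sheaf cohomology yields $H^{p,q}(\Omega)=0$ for all $p$ and all $q\ge 1$.

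A more hands-on alternative that avoids the Grauert black box proceeds by exhaustion and $L^2$ methods. By (2.2), for small $\epsilon>0$ the sublevel set $\Omega_\epsilon=\{\delta>\epsilon\}$ has smooth strongly pseudoconvex boundary, so Kohn's Corollary above solves $\dbar u_\epsilon = f|_{\Omega_\epsilon}$ on each such $\Omega_\epsilon$ for any $\dbar$-closed $f\in C^\infty_{p,q}(\Omega)$ with $q\ge 1$. For $q\ge 2$, a standard Mittag-Leffler/diagonal argument along an exhaustion $\Omega_{\epsilon_k}\nearrow\Omega$ patches the $u_{\epsilon_k}$ into one global smooth solution. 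For $q=1$ the patching demands Runge-type approximation of holomorphic $(p,0)$-forms on $\Omega_{\epsilon_{k+1}}$ by holomorphic $(p,0)$-forms on the larger $\Omega_{\epsilon_k}$; this is precisely where H\"ormander's weighted $L^2$ method with the convexified Takeuchi weight $\chi(-\log\delta)$ is needed.

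The main obstacle, in either approach, is not the cohomology vanishing itself but converting Takeuchi's geometric estimate into a usable analytic tool: either producing a genuinely smooth strictly plurisubharmonic exhaustion from the Lipschitz function $-\log\delta$, or securing the Runge-type approximation of holomorphic forms on nested strongly pseudoconvex subdomains. Both routes ultimately rest on the positivity $i\de\dbar(-\log\delta)\ge K_0\omega$ furnished by \eqref{eq:Takeuchi}.
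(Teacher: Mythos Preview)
Your proposal is correct. The paper does not actually supply a proof of this theorem: it simply states ``Using \eqref{eq:2.1}, we have the following theorem'' and attributes the result to Takeuchi, treating it as known background. Your first route---reading \eqref{eq:Takeuchi} as saying $-\log\delta$ is a strictly plurisubharmonic exhaustion, regularizing \`a la Richberg, invoking Grauert's characterization to conclude $\Omega$ is Stein, and then applying Cartan's Theorem~B---is precisely the classical argument the paper is alluding to, so you have filled in exactly the details the paper omits. Your second, hands-on exhaustion argument is a legitimate alternative but is more work than necessary here; the paper's later sections do use such exhaustion-by-strongly-pseudoconvex-subdomains arguments, but for the $L^2$ results rather than for the $C^\infty$ Dolbeault vanishing of Theorem~\ref{th:Takeuchi}.
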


  \subsection{$L^2$ existence theorems for $(0,q)$-forms}

    Let $L^2_{p, q}(\Omega)$ be the space of $(p, q)$-forms $u$ on $\Omega$ with respect to the Fubini-Study metric $\omega$ such that
 \[
 \|u\|^2_\omega=\int_{\Omega} |u|_\omega^2  dV_\omega <\infty.
 \]
 We will also use $(\cdot, \ \cdot)_\omega$ to denote the associated inner product.  Let   $\dbar\colon L^2_{p, q-1}(\Omega)\to L^2_{p, q}(\Omega)$ be the weak maximal closure of $\dbar$   and 
 let $\dbar^*_\omega$ be the Hilbert space  adjoint of $\dbar$. We now recall an integration by parts formula.

  Let $L_1, \ldots, L_n$ be a local orthonormal frame field of type $(1, 0)$  and $\phi^1, \ldots, \phi^n$ be the coframe field.   For a $(p, q)$-form $u$, we set
 \[
 \langle \Theta u, u\rangle_\omega =\sum_{j, k=1}^n \langle \bar{\phi}^j\wedge \big( \bar L_k\lrcorner R(L_j, \bar L_k) u\big), \ u\rangle_\omega,
 \]
 where $R$ is the curvature  operator on $(p,q)$-forms with respect to the Fubini-Study metric and  $\lrcorner$ is the usual contraction operator 
 (see e.g \cite{CSW04, FuShaw22}).  
We  have that if $u$ is a $(p, q)$-form on $\CP^n$ with $q\ge 1$, then
\begin{equation}\label{eq:c1}
\langle\Theta u, u\rangle_\omega=0, \quad \text{if }\quad p=n; \quad\quad 
\langle\Theta u, u\rangle_\omega\ge 0, \quad  \text{if}\quad p\ge 1;
\end{equation}
and
\begin{equation}\label{eq:c2}
\langle\Theta u, u\rangle_\omega=q(2n+1)|u|^2 \quad \text{if}\quad  p=0. 	
\end{equation}
 With the above notations, we can now state the following 
 {\it Basic Identity} (see \cite{Wu88, Siu00, CSW04}).
 
 \begin{theorem}[\textbf{Bochner-Kodaira-Morrey-Kohn}]  Let $\Omega$ be a domain  with  $C^2$  boundary $b\Omega$ in in $\CP^n$ and let $\omega$ be the Fubini-Study metric. 
  For any $u\in C^1_{p, q}(\overline{\Omega})\cap \dom(\dbar^*_\omega)$, we have
 \begin{equation}\label{eq:BKMKH}
 	\|\dbar u\|^2_\omega+\|\dbar^*_\omega u\|_{\omega}^2 =\|\overline\nabla u\|_{\omega}^2 +(\Theta u,\ u)_\omega+  
 	\int_{b\Omega} \langle(\partial \dbar \rho) u, \ u\rangle_\omega  dS_\omega 
 \end{equation}
 where  $dS_\omega$ is the induced surface element on $b\Omega$ and 
$
 \|\overline\nabla u\|_\omega^2=\sum_{j=1}^n \|\nabla_{\bar L_j} u\|_\omega^2.  
$
 \end{theorem}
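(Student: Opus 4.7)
The plan is to follow the classical Bochner--Kodaira--Morrey--Kohn derivation, now carried out for the K\"ahler Fubini--Study metric, and to be careful about the boundary contributions. I would start by choosing a local orthonormal frame $L_1,\ldots,L_n$ of type $(1,0)$ with dual coframe $\phi^1,\ldots,\phi^n$, writing a $(p,q)$-form as $u=\sideset{}{'}\sum_{I,J} u_{I\bar J}\,\phi^I\wedge\bar\phi^J$ where the prime denotes summation over strictly increasing multi-indices. In these frames
$$
\dbar u \;=\; \sideset{}{'}\sum_{I,J}\sum_k (\bar L_k u_{I\bar J})\,\bar\phi^k\wedge\phi^I\wedge\bar\phi^J \;+\;\text{(Christoffel terms)},
$$
and the formal adjoint on the interior is $\dbar^*_\omega u = -\sideset{}{'}\sum\sum_k (L_k u_{I k\bar J})\,\phi^I\wedge\bar\phi^{J}$ plus lower-order terms in the connection coefficients.

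Next, I would compute $\|\dbar u\|_\omega^2+\|\dbar^*_\omega u\|_\omega^2$ by integration by parts on $\Omega$. Each pairing $(\bar L_k u_{I\bar J},\bar L_k u_{I\bar J})$ contributes to $\|\overline\nabla u\|_\omega^2$. The cross-terms arise from applying integration by parts twice, which introduces the commutator $[L_j,\bar L_k]$. For the K\"ahler Fubini--Study metric the torsion vanishes, so the only surviving part of the commutator, when acting on the coefficients of $u$, contributes the curvature operator $R(L_j,\bar L_k)$. Reorganizing the double sum as
$$
\sum_{j,k}\bigl\langle \bar\phi^j\wedge\bigl(\bar L_k\lrcorner R(L_j,\bar L_k)u\bigr),u\bigr\rangle_\omega
$$
gives exactly $\langle\Theta u,u\rangle_\omega$.

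Each integration by parts in $L_k$ or $\bar L_k$ also produces a boundary integral involving $\bar L_k\rho$ or $L_k\rho$, where $\rho$ is a defining function for $b\Omega$. The condition $u\in\dom(\dbar^*_\omega)$ is equivalent to the vanishing on $b\Omega$ of the complex normal part of $u$, i.e.\ $\sideset{}{'}\sum_k (\bar L_k\rho)\,u_{Ik\bar J}=0$ after appropriate normalization. I would use this boundary condition to show that every boundary term containing one undifferentiated $\rho$ cancels, and that the only surviving boundary contribution is the one in which both a $\partial$ and a $\dbar$ hit $\rho$; that term reassembles precisely into $\int_{b\Omega}\langle(\partial\dbar\rho)u,u\rangle_\omega\,dS_\omega$.

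The main obstacle is the combinatorial bookkeeping of the boundary integrals: one must show that after applying the defining equation of $\dom(\dbar^*_\omega)$, every cross-term with a single normal derivative of $\rho$ cancels, leaving only the Levi-form term. A secondary technical point is the identification of $\Theta$; here one uses that for a K\"ahler metric the Chern connection coincides with the Levi-Civita connection on $T^{1,0}$, so the curvature appearing through the commutator $[L_j,\bar L_k]$ is precisely the $R$ in the statement. Once the interior calculation yields $\|\overline\nabla u\|_\omega^2+(\Theta u,u)_\omega$ and the boundary reduction yields the $\partial\dbar\rho$ integral, the identity \eqref{eq:BKMKH} follows by collecting terms.
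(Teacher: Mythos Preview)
The paper does not actually prove this theorem: immediately after the statement it writes ``For a proof of these results, see \cite{Wu88} or Proposition A.5 in \cite{CSW04}'' and moves on. So there is no in-paper proof to compare against.

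That said, your outline is the standard derivation one finds in those references: compute $\|\dbar u\|^2_\omega+\|\dbar^*_\omega u\|^2_\omega$ in a local unitary frame, integrate by parts, use the K\"ahler condition so that the commutators $[L_j,\bar L_k]$ produce exactly the curvature operator $R(L_j,\bar L_k)$ (with no torsion terms), and use the $\dom(\dbar^*_\omega)$ boundary condition $\sum_k(\bar L_k\rho)\,u_{Ik\bar J}=0$ on $b\Omega$ to kill the single-normal boundary terms and leave only the Levi-form integral $\int_{b\Omega}\langle(\partial\dbar\rho)u,u\rangle_\omega\,dS_\omega$. Your identification of the two delicate points---the combinatorial cancellation at the boundary and the recognition of $\Theta$ via the curvature of the Chern/Levi-Civita connection---is accurate, and nothing in your sketch is wrong in spirit. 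If you want to promote the sketch to a full proof you will need to track the zeroth-order connection terms carefully (they also rearrange into $\Theta$), but the structure you describe is exactly what the cited sources carry out.
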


For a proof of these results, see  \cite{Wu88} or  Proposition A.5 in    \cite{CSW04}. We first exploit the positivity of the curvature $\Theta$  for $(0,q)$-forms.  When $p=0$, we have the following proposition using \eqref{eq:c2}. 

\begin{proposition}\label{prop:dbar1} Let $\Omega$ be a 
pseudoconvex domain in $\CP^n$ with $C^2$ boundary and $1\le q\le n-1$.  Then 
	\begin{equation}\label{eq:e1}
		\|\dbar u\|_\omega^2+\|\dbarstar u\|_\omega^2\ge q(2n+1)\|u\|_\omega^2
	\end{equation} 
for any $(0, q)$-form $u\in \dom(\dbar)\cap\dom(\dbarstar_\omega)$. \end{proposition}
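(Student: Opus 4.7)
The plan is to derive the estimate directly from the Bochner-Kodaira-Morrey-Kohn identity \eqref{eq:BKMKH} applied to $(0,q)$-forms, using the curvature computation \eqref{eq:c2} to produce the coefficient $q(2n+1)$ and pseudoconvexity to control the boundary term.

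First I would take $u \in C^1_{0,q}(\overline{\Omega}) \cap \dom(\dbar^*_\omega)$ and apply \eqref{eq:BKMKH}. Since $\Omega$ is pseudoconvex with $C^2$ boundary and $u \in \dom(\dbar^*_\omega)$ (so $u$ has vanishing complex normal component on $b\Omega$), a standard computation gives
\[
\int_{b\Omega} \langle (\partial \dbar \rho) u, u \rangle_\omega \, dS_\omega \ge 0,
\]
where $\rho$ is a $C^2$ defining function of $\Omega$. Dropping this nonnegative boundary term together with the nonnegative term $\|\overline\nabla u\|_\omega^2$ yields
\[
\|\dbar u\|_\omega^2 + \|\dbar^*_\omega u\|_\omega^2 \ge (\Theta u, u)_\omega.
\]
By \eqref{eq:c2}, the right hand side equals $q(2n+1)\|u\|_\omega^2$ for $(0,q)$-forms, giving the asserted inequality on the dense subspace of smooth forms.

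Next I would pass from the smooth case to general $u \in \dom(\dbar) \cap \dom(\dbar^*_\omega)$ by density. The required ingredient is the classical fact that, for a $C^2$ pseudoconvex domain, $C^\infty_{0,q}(\overline\Omega) \cap \dom(\dbar^*_\omega)$ is dense in $\dom(\dbar) \cap \dom(\dbar^*_\omega)$ in the graph norm $(\|u\|_\omega^2 + \|\dbar u\|_\omega^2 + \|\dbar^*_\omega u\|_\omega^2)^{1/2}$; this is the analog in $\CP^n$ of the Hörmander--Friedrichs approximation for domains in $\C^n$, and it works verbatim since the Fubini-Study metric is smooth on $\overline\Omega$. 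Both sides of \eqref{eq:e1} are continuous in this norm, so the inequality survives in the limit.

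The only delicate step is the density/approximation, since the coefficient $q(2n+1)$ does not depend on any geometric quantity of $\Omega$ and must be preserved exactly in the limit; this is automatic once one knows the graph-norm closability of $C^1_{0,q}(\overline\Omega) \cap \dom(\dbar^*_\omega)$ inside $\dom(\dbar) \cap \dom(\dbar^*_\omega)$. No additional ideas beyond \eqref{eq:BKMKH}, \eqref{eq:c2}, and pseudoconvexity are needed; the proof really is just ``read off the curvature constant from the basic identity.''
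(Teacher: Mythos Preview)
Your proposal is correct and follows exactly the route the paper indicates: the paper does not write out a detailed proof but simply says the proposition follows ``using \eqref{eq:c2}'' after stating the Bochner--Kodaira--Morrey--Kohn identity \eqref{eq:BKMKH}, which is precisely your argument of dropping the nonnegative gradient and Levi-form terms and reading off the curvature constant. Your added density step to pass from $C^1_{0,q}(\overline{\Omega})\cap\dom(\dbar^*_\omega)$ to all of $\dom(\dbar)\cap\dom(\dbar^*_\omega)$ is the standard completion the paper leaves implicit.
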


\begin{theorem}[\textbf{$L^2$ existence for $(0,q)$-Forms}] \label{th:L2 cpn} Let $\Omega$ be a 
pseudoconvex domain in $\CP^n$ such that $\overline \Omega \neq \CP^n$   and $1\le q\le n-1$.      For any $\dbar$-closed $(0,q)$-form
 $f\in L^2_{0, q}(\Omega)$,
there exists a $(0, q-1)$-form
$u\in L^2_{0, q-1}(\Omega)$ such that $\dbar u= f$ with 
\begin{equation}\label{eq:e2}
	  \|u\|_\omega^2\le \frac{1}{q(2n+1)} \|f\|_\omega^2. 
\end{equation}

\end{theorem}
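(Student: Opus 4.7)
The plan is to run H\"ormander's classical $L^2$ duality argument, using the a priori estimate \eqref{eq:e1} from Proposition \ref{prop:dbar1} as the sole analytic input. I would let $T\colon L^2_{0,q-1}(\Omega)\to L^2_{0,q}(\Omega)$ and $S\colon L^2_{0,q}(\Omega)\to L^2_{0,q+1}(\Omega)$ denote the maximal weak closures of $\dbar$, so that both are closed and densely defined and $ST=0$, giving $\range(T)\subset\ker(S)$. For the given $\dbar$-closed form $f\in L^2_{0,q}(\Omega)$, the goal will be to produce $u\in\dom(T)$ with $Tu=f$ and the norm bound \eqref{eq:e2}. By standard Hilbert-space duality it will suffice to show that the assignment $T^*\phi\mapsto(f,\phi)_\omega$ defines a bounded linear functional on $\range(T^*)\subset L^2_{0,q-1}(\Omega)$ of operator norm at most $\|f\|_\omega/\sqrt{q(2n+1)}$; Riesz representation will then yield $u$, and closedness of $T$ will give $u\in\dom(T)$, $Tu=f$, and the desired bound.

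The key step will be the orthogonal decomposition. For any $\phi\in\dom(T^*)$ I will write $\phi=\phi_1+\phi_2$ with $\phi_1\in\ker(S)$ and $\phi_2\in\ker(S)^\perp$, using that $\ker(S)$ is closed. Since $Tv\in\ker(S)$ for every $v\in\dom(T)$, one has $(Tv,\phi_2)_\omega=0$, so $\phi_2$ automatically lies in $\dom(T^*)$ with $T^*\phi_2=0$; consequently $\phi_1=\phi-\phi_2\in\dom(T^*)$ as well, with $T^*\phi_1=T^*\phi$. Then $\phi_1\in\ker(S)\cap\dom(T^*)$, and applying \eqref{eq:e1} to $\phi_1$ (whose $S$-image vanishes) gives
\[
\|\phi_1\|_\omega^2\le \frac{1}{q(2n+1)}\|T^*\phi_1\|_\omega^2=\frac{1}{q(2n+1)}\|T^*\phi\|_\omega^2.
\]

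Finally, since $f\in\ker(S)$ and $\phi_2\in\ker(S)^\perp$, $(f,\phi)_\omega=(f,\phi_1)_\omega$, and Cauchy-Schwarz together with the preceding inequality yields
\[
|(f,\phi)_\omega|\le \|f\|_\omega\,\|\phi_1\|_\omega\le \frac{\|f\|_\omega}{\sqrt{q(2n+1)}}\|T^*\phi\|_\omega,
\]
which is exactly the functional bound sought. There is no substantive obstacle here: the theorem reduces almost immediately to the uniform lower bound \eqref{eq:e1}, which itself packages the positive Fubini-Study curvature contribution \eqref{eq:c2} for $(0,q)$-forms together with the nonnegative boundary term from pseudoconvexity in \eqref{eq:BKMKH}. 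Once that estimate is in hand, existence together with the sharp constant $1/(q(2n+1))$ in \eqref{eq:e2} falls out from the Riesz representation step, with no need for the boundary to be smoother than $C^2$ or for any further regularization argument.
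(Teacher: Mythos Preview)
Your duality argument is correct and is precisely the mechanism the paper has in mind when it says that \eqref{eq:e2} ``is a consequence of \eqref{eq:e1}'' in the $C^2$ case. However, you have a genuine gap: the theorem is stated for an \emph{arbitrary} pseudoconvex domain $\Omega\subset\CP^n$ with $\overline\Omega\neq\CP^n$, with no boundary regularity hypothesis whatsoever, while Proposition~\ref{prop:dbar1} --- your only analytic input --- is stated and proved only for domains with $C^2$ boundary (the Bochner--Kodaira--Morrey--Kohn identity \eqref{eq:BKMKH} involves a boundary integral and needs at least this much smoothness). Your final sentence, ``no need for the boundary to be smoother than $C^2$ or for any further regularization argument,'' reads as though you believe $C^2$ is being assumed; it is not.

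The paper closes this gap by exhausting $\Omega$ from inside by pseudoconvex domains $\Omega_\epsilon$ with smooth boundary (available via Takeuchi's theorem, cf.\ \eqref{eq:2.1}). On each $\Omega_\epsilon$ your argument yields a solution $u_\epsilon$ with $\|u_\epsilon\|_\omega^2\le \frac{1}{q(2n+1)}\|f\|_\omega^2$, the constant being independent of $\epsilon$; a weak limit then gives the solution on $\Omega$ with the same bound. You should add this step.
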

\begin{proof} If $\Omega$ has $C^2$ boundary,   estimate \eqref{eq:e2} is then a consequence of \eqref{eq:e1}. 
 The general case is then proved by exhausting $\Omega$ from inside by pseudoconvex domains with smooth boundaries. 
\end{proof}

Notice that the constant   ${1}/{q(2n+1)}$ in \eqref{eq:e2} is independent of the diameter of the domain in $\CP^n$ with respect to the Fubini-Study metric. 

\begin{corollary}\label{co:L2 vanishing}  Let $\Omega$ be a 
pseudoconvex domain in $\CP^n$ such that $\overline \Omega \neq \CP^n$.  We have
$$H^{0,q}_{L^2} (\Omega)=0 \qquad \text{for every }q>0.$$
\end{corollary}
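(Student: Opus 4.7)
The corollary is mostly just a restatement of Theorem~\ref{th:L2 cpn}, but one needs to address the degree $q=n$ which sits outside the range $1\le q\le n-1$ covered by Proposition~\ref{prop:dbar1}. My plan is therefore to invoke Theorem~\ref{th:L2 cpn} for the range $1\le q\le n-1$ and run a parallel Bochner--Kodaira--Morrey--Kohn argument for the top degree $q=n$. In the range $1\le q\le n-1$, if $f\in L^2_{0,q}(\Omega)$ with $\dbar f=0$, then Theorem~\ref{th:L2 cpn} produces $u\in L^2_{0,q-1}(\Omega)$ with $\dbar u=f$, which is exactly the vanishing of the class $[f]\in H^{0,q}_{L^2}(\Omega)$.

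For $q=n$, observe first that every $f\in L^2_{0,n}(\Omega)$ is automatically $\dbar$-closed since $\CP^n$ carries no nonzero $(0,n+1)$-forms. The task is therefore purely existence: produce $u\in L^2_{0,n-1}(\Omega)$ with $\dbar u=f$. I would reprove the basic estimate of Proposition~\ref{prop:dbar1} at $q=n$: assuming $\Omega$ has $C^2$ pseudoconvex boundary, apply identity \eqref{eq:BKMKH} to any $u\in C^1_{0,n}(\overline\Omega)\cap \dom(\dbar^*_\omega)$. The boundary integral is nonnegative by pseudoconvexity, the $\|\overline\nabla u\|^2$ term is nonnegative, and by \eqref{eq:c2} the curvature term equals $n(2n+1)\|u\|_\omega^2$. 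Combined with $\dbar u=0$ for the top-degree form, this yields
\begin{equation*}
\|\dbar^*_\omega u\|_\omega^2 \;\ge\; n(2n+1)\,\|u\|_\omega^2
\end{equation*}
for all $(0,n)$-forms in $\dom(\dbar^*_\omega)$, after the usual density argument.

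The existence of $u$ then follows from the standard Hilbert space duality: for fixed $f\in L^2_{0,n}(\Omega)$, the anti-linear functional $\dbar^*_\omega v\mapsto (v,f)_\omega$ on $\mathrm{Range}(\dbar^*_\omega)$ is bounded by $\|f\|_\omega/\sqrt{n(2n+1)}\,\|\dbar^*_\omega v\|_\omega$; Hahn--Banach (or the Riesz representation applied after completing) then produces $u\in L^2_{0,n-1}(\Omega)$ satisfying $\dbar u=f$ with $\|u\|_\omega^2\le \|f\|_\omega^2/(n(2n+1))$.

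Finally, to drop the $C^2$-boundary assumption, I would exhaust $\Omega$ from inside by a sequence of smooth pseudoconvex domains $\Omega_j\Subset \Omega_{j+1}\Subset\Omega$ with $\bigcup_j\Omega_j=\Omega$; Takeuchi's theorem~\eqref{eq:Takeuchi} ensures such an exhaustion exists since $-\log\delta$ is a strictly plurisubharmonic exhaustion function on $\Omega$. Solve $\dbar u_j=f|_{\Omega_j}$ on each $\Omega_j$ with the uniform bound above, then extract a weakly convergent subsequence in $L^2_{0,q-1}(\Omega)$. The limit gives the desired global solution, completing the vanishing of $H^{0,q}_{L^2}(\Omega)$ for all $q>0$. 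No serious obstacle is anticipated; the only point requiring care is the $q=n$ estimate, for which the BKM--Kohn identity delivers the needed coercivity for free because the $\dbar u$ term vanishes identically.
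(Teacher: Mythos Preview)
Your proposal is correct and follows the same Bochner--Kodaira--Morrey--Kohn route that underlies the paper's Theorem~\ref{th:L2 cpn}. The paper itself gives no separate proof of the corollary, treating it as immediate; you have correctly noticed that Theorem~\ref{th:L2 cpn} and Proposition~\ref{prop:dbar1} are stated only for $1\le q\le n-1$ and have supplied the missing top-degree case $q=n$ by the obvious extension of the same argument (the curvature identity \eqref{eq:c2} is stated for all $q\ge 1$, and at $q=n$ the estimate even simplifies since $\dbar u=0$ automatically, forcing $\ker(\dbar^*_\omega)=\{0\}$ on $(0,n)$-forms).
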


 Theorem \ref{th:L2 cpn} gives  an alternative proof of  H\"ormander's $L^2$ existence for $\dbar$ for bounded pseudoconvex domains in $\C^n$. 

\begin{theorem}[\textbf{H\"ormander's Theorem Revisited}]\label{th:L2 Hormander} Let $\Omega$ be a bounded  pseudoconvex domain in $\C^n$  with diameter $d$, where $d=\sup_{z,z'\in \Omega} |z-z'|$.   Then for any $f\in L^2_{p, q}(\Omega)$ with $\dbar f=0$, there is a $(p, q-1)$-form $u\in L^2_{(p, q-1)}(\Omega)$ such that $\dbar u= f$ with
\begin{equation}\label{eq:L2 bound}
	  \|u\|^2 \le  c_{n,q} d^2 \|f\|^2
\end{equation}	 
where $\|\ \|$ is the Euclidean norm and  
\begin{equation}\label{eq:c n q} c_{n,q}=   \frac {e(n+2q+1)}{4q(2n+1)}.\end{equation}  
\end{theorem}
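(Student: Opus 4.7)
The plan is to reduce to the case $p=0$ and then apply Theorem \ref{th:L2 cpn} on a scaled copy of $\Omega$ embedded in $\CP^n$. For the reduction, write $f = \sum_{|I|=p} dz^I \wedge f_I$ with $f_I$ a $\dbar$-closed $(0,q)$-form; since $\{dz^I\}_{|I|=p}$ is $\dbar$-closed and Euclidean-orthonormal, $\|f\|^2 = \sum_I \|f_I\|^2$, and if I can solve $\dbar u_I = f_I$ componentwise with the required bound, then $u := (-1)^p \sum_I dz^I \wedge u_I$ satisfies $\dbar u = f$ with $\|u\|^2 = \sum_I \|u_I\|^2$. So I may assume $p=0$.

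For the $(0,q)$ case, translate so that $0 \in \Omega \subset B(0,d)$, fix a parameter $\lambda > 0$ to be optimized, and consider $\tilde\Omega := \lambda\Omega \subset B(0,\lambda d)$, regarded as a pseudoconvex domain in $\CP^n$ via the standard affine chart $\C^n \hookrightarrow \CP^n$. Pulling $f$ back by the biholomorphism $w = \lambda z$ gives a $\dbar$-closed form $\tilde f$ on $\tilde\Omega$, and since $\overline{\tilde\Omega} \ne \CP^n$, Theorem \ref{th:L2 cpn} produces $\tilde u \in L^2_{0,q-1}(\tilde\Omega)$ with $\dbar\tilde u = \tilde f$ and $\|\tilde u\|^2_\omega \le \frac{1}{q(2n+1)} \|\tilde f\|^2_\omega$ in the Fubini-Study norm. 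A direct calculation with $\omega = i\partial\dbar \log(1+|w|^2)$ shows that the dual Fubini-Study metric on $T^{*0,1}$ has eigenvalues $(1+|w|^2)$ (multiplicity $n-1$) and $(1+|w|^2)^2$ (multiplicity one), while $dV_\omega = (1+|w|^2)^{-(n+1)} dV_E$. These pointwise facts yield bounds converting the Fubini-Study norms of $\tilde u$ and $\tilde f$ to Euclidean norms on $\tilde\Omega$; combined with the change-of-variables identities $\|\tilde f\|^2_E = \lambda^{2(n-q)} \|f\|^2_E$ and $\|\tilde u\|^2_E = \lambda^{2(n-q+1)} \|u\|^2_E$, I arrive at
\[
\|u\|^2 \le \frac{(1+\lambda^2 d^2)^\alpha}{\lambda^2\, q(2n+1)} \|f\|^2
\]
for a suitable exponent $\alpha = \alpha(n,q)$ coming from the pointwise comparison.

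Finally, setting $t := \lambda^2 d^2$ and minimizing $(1+t)^\alpha / t$ over $t > 0$ at the critical point $t = 1/(\alpha-1)$ yields a bound of the form $c_{n,q}\, d^2$ after invoking the elementary inequality $(1+1/m)^m < e$. The main obstacle will be carrying out the pointwise Fubini-Study/Euclidean comparison sharply enough — in particular tracking separately the contributions of the radial and tangential directions of $T^{*0,1}$, for the form degrees $q$ and $q-1$ — to pin down the correct $\alpha$ and extract the specific numerical constant $c_{n,q} = e(n+2q+1)/(4q(2n+1))$. The $d^2$ scaling itself falls out automatically from the minimization step.
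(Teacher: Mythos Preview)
Your approach is essentially the paper's: reduce to $p=0$, embed a scaled copy of $\Omega$ into $\CP^n$, apply Theorem~\ref{th:L2 cpn}, compare Fubini--Study and Euclidean norms, and optimize over the scale. Two small points where you diverge from the paper: first, the paper does \emph{not} track the radial/tangential eigenvalues separately but simply uses the crude pointwise bounds $1\le |dz_j|_\omega^2\le (1+|z|^2)^2$ together with $dV_\omega=(1+|z|^2)^{-(n+1)}dV_E$, which already gives $\alpha=n+1+2q$ and hence the stated constant---so your ``main obstacle'' is easier than you expect; second, to recover the factor $4$ in the denominator of $c_{n,q}$ the paper translates $\Omega$ into $B(0,d/2)$ rather than $B(0,d)$, so your choice $0\in\Omega\subset B(0,d)$ would produce $4c_{n,q}$ instead.
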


\begin{proof} Since the domain $\Omega$ is in $\C^n$, $p$ plays no role. We  may assume that $p=0$. We first embed $\C^n$ in $\CP^n$ and  view $\Omega$ as a domain in $\CP^n$ endowed with the Fubini-Study metric.  Using Theorem \ref{th:L2 cpn}, there exists $u\in L^2_{0,q-1}(\Omega)$ such that 
\begin{equation}\label{eq:u fs} \|u\|_\omega^2\le \frac{1}{q(2n+1)} \|f\|_\omega^2. 
\end{equation}

Next we compare norm $\|\ \|_\omega$ with the Euclidean norm $\|\ \|$.  
 The K\"{a}hler form $\omega$  is given by 
 \begin{align}
 \omega&= i\partial\dbar \log (1+|z|^2) \\
             &=i\sum_{\alpha, \beta=1}^n g_{\alpha\bar\beta}(z)\, dz_\alpha\wedge d\bar z_\beta
 \end{align}
 where
 \begin{equation}\label{eq:g1}
 g_{\alpha\bar\beta}(z)=\frac{\partial^2\log (1+|z|^2)} {\partial z_\alpha \partial \bar z_\beta} =\frac{(1+|z|^2)\delta_{\alpha\bar\beta}-\bar z_\alpha z_\beta}{(1+|z|^2)^2}.
 \end{equation}
The volume form $dV_\omega$ with respect to $\omega$  is   
\begin{equation}\label{eq:g2}
dV_{\omega}=\det(g_{\alpha\bar\beta}(z)) dV_{\ec}=\frac{1}{(1+|z|^2)^{n+1}} dV_{\ec}
\end{equation}
where $dV_{\ec}$ is the Euclidean volume form. Furthermore, we have
\begin{equation}\label{eq:w we}\frac 1{(1+|z|^2)^2}\omega_{\ec} \le \omega\le\frac 1{1+|z|^2} \omega_{\ec}.\end{equation}

Assume that $\Omega$  has diameter  $d=2\epsilon$ in $\C^n$ with respect to the Euclidean norm for some $\epsilon>0$.    Without loss of generality, we may assume that  $\Omega\subset B_\epsilon(0)=\{z\in \C^n\mid |z|<\epsilon\}$.  Then using \eqref{eq:g2}, we have
$$  \frac 1 {(1+\epsilon^2)^{n+1}} dV_{\ec}\le dV_\omega\le   dV_{\ec}.$$   
Using  \eqref{eq:w we},   we have 
$$\frac 1{(1+\epsilon^2)^2}\omega_{\ec}\le \omega\le \omega_{\ec}$$ 
and 
\begin{equation}\label{eq:dz w}1\le |dz_j|_\omega \le (1+|z|^2).\end{equation}

  Let  $f=\sum_K f_K d\bar z_K$,  where $K$ is a multi-index with $|K|=q$. It follows from \eqref{eq:g2} and  \eqref{eq:dz w}  that 
$$ |f|^2_\omega dV_\omega \le |f|^2  (1+|z|^2)^{2q}  dV_\omega  \le (1+\epsilon^2)^{2q} |f|^2 dV_0,$$
 and 
\begin{equation}\label{eq:f e w} \|f\|^2_\omega\le (1+\epsilon^2)^{2q} \|f\|^2.\end{equation}
  Similarly, we have  
$$|u|^2_\omega  dV_\omega\ge |u|^2 dV_\omega \ge\frac 1{ (1+\epsilon^2)^{n+1}} |u|^2 dV_0,$$
and 
 \begin{equation}\label{eq:u e w} \|u\|^2_\omega \ge \frac 1{(1+\epsilon^2)^{n+1}} \|u\|^2.\end{equation}

It follows from \eqref{eq:u fs}, \eqref{eq:f e w}  and \eqref{eq:u e w}  that  we have
\begin{equation}\label{eq:u f epsilon} \| u\|^2\le \frac {(1+\epsilon^2)^{n+1+2q} }{q(2n+1)}\|f\|^2,\end{equation}
when we assume that  $\Omega$  has diameter $2\epsilon$.

Suppose that $\Omega$   lies in $B_1(0)$,  the ball of radius $1$. By  scaling, we have for any $\epsilon>0$, 
\begin{equation}\label{eq:u f epsilon} \| u\|^2\le \frac 1{q(2n+1)}\frac  {(1+\epsilon^2)^{n+1+2q} }{\epsilon^2}  \|f\|^2.\end{equation}
Since $\epsilon>0$ is arbitrary, we see that the function 
$$\phi(\epsilon) =\frac  {(1+\epsilon^2)^{n+1+2q} }{\epsilon^2} $$
achieves its minimum $m$  at $\epsilon = 1/\sqrt{n+2q}$ with 
$$m=  (n+2q+1) \bigg(1+\frac 1{n+2q}\bigg)^{n+2q}.$$ 
Since 
 $ (1+  1/(n+2q))^{n+2q}\nearrow e$, let    $c_{n,q}$ be defined by \eqref{eq:c n q}. Then 
 it follows from   \eqref{eq:u f epsilon}  that 
$$\|u\|^2\le   c_{n,q} 2^2 \|f\|^2.$$
 when $\Omega$  has diameter 2. 

Suppose the domain $\Omega$ in $\C^n$  is with arbitrary diameter  $d$. Notice that the Euclidean metric admits a dilation. Thus    \eqref{eq:L2 bound} follows easily from a scaling argument.
\end{proof}

\begin{remark}   Theorem \ref{th:L2 Hormander} is an alternative proof of the H\"ormander's $L^2$ theory (see Theorem \ref{th:L2 Hormander 0} and  \cite{Hormander65}). H\"ormander's method is to use the weight function $\varphi = t|z|^2$ to obtain  the $L^2$ existence  with estimate \eqref{eq:L2 bound} with   $c_{n,q}=e/q$.   In comparison, the proof here uses the positive curvature of the Fubini-Study metric and the constant $c_{n,q}$ given by \eqref{eq:c n q}  is comparable up to a factor  to H\"ormander's results. 

\end{remark}

\subsection{The $\dbar$-Neumann operator with weights}
When $p>0$, $L^2$ theory for $\dbar$ on a domain $\Omega$  in $\CP^n$  requires more work since the curvature $\Theta$ is only nonnegative.  
Let $\Omega$ be a  pseudoconvex  domain with Lipschitz boundary $b\Omega$ in  $\CP^n$.  We may assume that there exists a Lipschitz defining function $\rho=-\delta$ such that 
\begin{equation}\label{eq:delta weight}i\partial \dbar (-\log  \delta)\ge C\omega\end{equation}
for some $C>0$.  Let $t>0$ and let $\phi_t= -t \log \delta$. Then $\phi_t$ is a strictly plurisubharmonic function on $\Omega$. Using $\phi_t$ as the weight function in  H\"ormander's $L^2$ methods with the weight function $\phi_t$,   we have  
$$e^{-\phi_t}= e^{t\log\delta}=\delta^t.$$ 
 We use $\|\ \|_t$ to denote the $L^2$  norm with weight under the  under the Fubini-Study metric. 

Let  
$$\dbar :L^2_{p,q-1}({\Omega,\delta^t})\to L^2_{p,q}(\Omega,\delta^t)$$     be the  weak  maximal  $L^2$ closure of $\dbar$  and its  Hilbert space adjoint is denoted by  $\dbar^*_t$ 
 such that  $\dbar^*_t: L^2_{p,q}({\Omega,\delta^t})\to L^2_{p,q-1}(\Omega,\delta^t)$ and if $v\in \text{Dom}(\dbar^*_t)$ if and only if 
 $$(\dbar u, v)_t= (u, \dbar^*_t v)_t\qquad \text{for every } u\in \text{Dom}(\dbar).$$ 
Now we use the H\"ormander's $L^2$ methods with the weight function $\phi_t$  combined with the Bochner-Kodaira-Morrey-Kohn formula. Using the same notation as before, but now we suppress the dependence of the Fubini-Study metric $\omega$ and emphasize on the weighted norm with respect to $\phi_t$.

 \begin{theorem}[\textbf{Bochner-Kodaira-Morrey-Kohn-H\"ormander}]  Let $\Omega$ be a domain in $\CP^n$ with  $C^2$  boundary $b\Omega$.
  For any $u\in C^1_{p, q}(\overline{\Omega})\cap \dom(\dbar^*_t)$, we have
 \begin{equation}\label{eq:BKMKH1}
 	\|\dbar u\|^2_t+\|\dbar^*_t u\|_t^2 =\|\overline\nabla u\|_t^2 +(\Theta u,\ u)_t+ ((i\partial\dbar \phi_t) u, u)_t+ 
 	\int_{b\Omega} \langle(\partial \dbar \rho) u, \ u\rangle_t e^{-\phi_t} dS 
 \end{equation}
 where  $dS$ is the induced surface element on $b\Omega$ and 
$
 \|\overline\nabla u\|_t^2=\sum_{j=1}^n \|\nabla_{\bar L_j} u\|_t^2.  
$
 \end{theorem}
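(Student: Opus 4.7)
The plan is to mimic the derivation of the unweighted Bochner-Kodaira-Morrey-Kohn formula \eqref{eq:BKMKH}, now using the weighted inner product $(\cdot,\cdot)_t$ and carefully tracking the single extra interior term that the weight $e^{-\phi_t}$ produces.

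First I would describe how the weighted adjoint $\dbar^*_t$ relates to the unweighted $\dbarstar_\omega$. From $(u,v)_t=\int_\Omega \langle u,v\rangle_\omega e^{-\phi_t}\,dV_\omega$ and the definition of the Hilbert-space adjoint, a direct computation on smooth forms gives
\begin{equation*}
\dbar^*_t v \;=\; e^{\phi_t}\,\dbarstar_\omega\!\bigl(e^{-\phi_t}v\bigr) \;=\; \dbarstar_\omega v \;-\; (\de\phi_t)\,\lrcorner\, v,
\end{equation*}
so that $\dbar^*_t$ differs from $\dbarstar_\omega$ by a zeroth-order operator, namely contraction with $\de\phi_t$. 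Since multiplication by $e^{-\phi_t}$ does not alter the symbol of $\dbar^*$ at the boundary, $\dom(\dbar^*_t)\cap C^1_{p,q}(\ov\Omega)$ is characterized by the same Neumann-type condition involving only $d\rho$ that defines $\dom(\dbarstar_\omega)\cap C^1_{p,q}(\ov\Omega)$.

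Next I would expand $\|\dbar u\|_t^2+\|\dbar^*_t u\|_t^2$ and run the same integration-by-parts scheme that produces \eqref{eq:BKMKH}, but with the measure $e^{-\phi_t}dV_\omega$ in place of $dV_\omega$. Since the Chern connection, the Fubini-Study curvature operator $\Theta$, and the defining function $\rho$ are all unaffected by the weight, this immediately generates the terms $\|\ov\nabla u\|_t^2$, $(\Theta u,u)_t$, and the boundary Levi form $\int_{b\Omega}\langle(\de\dbar\rho)u,u\rangle_t e^{-\phi_t}dS$ just as in the unweighted case. The new feature is that whenever $\dbar$ or $\dbar^*_t$ is integrated by parts a derivative can fall on the weight $e^{-\phi_t}$, producing a commutator proportional to $\de\phi_t$ (resp.\ $\dbar\phi_t$); a final integration by parts converts these first-order contributions into the single interior Hessian term $((i\de\dbar\phi_t)u,u)_t$.

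The main obstacle is the algebraic bookkeeping of these lower-order commutator terms: one has to check that the zeroth-order pieces generated by the derivatives of $e^{-\phi_t}$, together with the $|\de\phi_t|^2$ cross terms coming from squaring the perturbation formula for $\dbar^*_t$, collapse to exactly one copy of $i\de\dbar\phi_t$ with no leftover quadratic-in-$\de\phi_t$ contribution. This is the classical cancellation underlying H\"ormander's weighted $L^2$ estimate, and once carried out in combination with the already-derived \eqref{eq:BKMKH} the weighted identity \eqref{eq:BKMKH1} follows at once.
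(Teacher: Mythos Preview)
The paper does not actually prove this theorem: it is stated without proof, just as the unweighted Bochner--Kodaira--Morrey--Kohn identity \eqref{eq:BKMKH} immediately preceding it is stated without proof (with a pointer to \cite{Wu88} and \cite{CSW04}). The weighted identity is presented as the standard combination of \eqref{eq:BKMKH} with H\"ormander's weighted technique, and the paper simply uses it.

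Your outline is the correct and standard derivation. The formula $\dbar^*_t v=\dbarstar_\omega v-(\de\phi_t)\lrcorner v$ is right, the boundary condition is indeed unchanged since the weight is smooth up to the boundary, and the mechanism you describe---the derivatives of $e^{-\phi_t}$ produced by integration by parts combine with the cross term from squaring $\dbarstar_\omega v-(\de\phi_t)\lrcorner v$ so that the $|\de\phi_t|^2$ pieces cancel and only $((i\de\dbar\phi_t)u,u)_t$ survives---is exactly the H\"ormander cancellation. So your proposal is correct and matches what the cited references carry out; there is nothing further to compare against in the paper itself.
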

 
 \begin{corollary}[\textbf{Weighted $L^2$ Existence for $\dbar$}]\label{co:L2 weights} Let $\Omega$ be a domain in $\CP^n$ with   Lipschitz   boundary $b\Omega$. 
 For any $0\le  p \le n-1$, $1\le q\le n$  and   
 $f\in L^2_{p, q}(\Omega,\delta^t)$ such that $f$ is $\dbar$-closed,
there exists a $(p, q-1)$-form
$u\in L^2_{p, q-1}(\Omega,\delta^t)$ such that $\dbar u= f$ with 
\begin{equation} 
	C t \|u\|_t^2\le    \|f\|_t^2 
\end{equation}
where $C$ is the same constant as in \eqref{eq:delta weight}.

\end{corollary}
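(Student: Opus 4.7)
The plan is to derive the weighted estimate
\begin{equation*}
Ct\,\|u\|_t^2\le \|\dbar u\|_t^2+\|\dbarstar_t u\|_t^2
\end{equation*}
for $u\in \dom(\dbar)\cap\dom(\dbarstar_t)$ of bidegree $(p,q)$ with $0\le p\le n-1$ and $1\le q\le n$, and then conclude the existence of $u$ solving $\dbar u=f$ with the desired bound via the standard Hilbert-space duality argument of H\"ormander.

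First, since $\Omega$ has only Lipschitz boundary, I would exhaust $\Omega$ from inside by smoothly bounded strictly pseudoconvex subdomains $\Omega_j$, obtained as regularized sublevel sets of $-\log\delta$ (whose strict plurisubharmonicity is guaranteed by Takeuchi's inequality \eqref{eq:delta weight}). On each $\Omega_j$ the weight $\phi_t=-t\log\delta$ is smooth, and the identity \eqref{eq:BKMKH1} applies to any smooth form $u\in C^1_{p,q}(\overline{\Omega_j})\cap\dom(\dbarstar_t)$. Three contributions on the right-hand side are nonnegative: the boundary integral, because $\Omega_j$ is pseudoconvex and $u\in\dom(\dbarstar_t)$ forces the tangential condition that turns $\partial\dbar\rho$ into the Levi form acting on complex tangential components; the curvature term $(\Theta u,u)_t$, which is nonnegative by \eqref{eq:c1} and \eqref{eq:c2} for the range $0\le p\le n-1$; and the gradient term $\|\overline\nabla u\|_t^2$. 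The only remaining term is the Hessian of the weight, and here the hypothesis \eqref{eq:delta weight} gives
\begin{equation*}
i\partial\dbar\phi_t=t\,i\partial\dbar(-\log\delta)\ge tC\omega,
\end{equation*}
which translates into the pointwise bound $((i\partial\dbar\phi_t)u,u)_t\ge tC\|u\|_t^2$ for $(p,q)$-forms with $q\ge 1$. Discarding the three nonnegative contributions yields the basic estimate on $\Omega_j$.

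Next, I would extend the basic inequality from smooth forms up to the boundary to all of $\dom(\dbar)\cap\dom(\dbarstar_t)$ by the standard graph-norm density argument (available on $\Omega_j$ since the boundary is smooth). Given a $\dbar$-closed $f\in L^2_{p,q}(\Omega_j,\delta^t)$, define the antilinear functional $\Lambda(\dbarstar_t\alpha)=(\alpha,f)_t$ on $\range(\dbarstar_t)$. For $\alpha\in\dom(\dbarstar_t)$, decompose $\alpha=\alpha_1+\alpha_2$ with $\alpha_1\in\ker(\dbar)\cap\dom(\dbarstar_t)$ and $\alpha_2\in\ker(\dbar)^\perp\subset\overline{\range(\dbarstar_t)}$; since $f\in\ker(\dbar)$ we have $(\alpha,f)_t=(\alpha_1,f)_t$, and the basic estimate (applied to $\alpha_1$, for which $\dbar\alpha_1=0$) gives $\|\alpha_1\|_t^2\le (tC)^{-1}\|\dbarstar_t\alpha_1\|_t^2\le (tC)^{-1}\|\dbarstar_t\alpha\|_t^2$. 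Hence $|\Lambda(\dbarstar_t\alpha)|^2\le (tC)^{-1}\|f\|_t^2\,\|\dbarstar_t\alpha\|_t^2$, so by Hahn-Banach and Riesz representation there exists $u_j\in L^2_{p,q-1}(\Omega_j,\delta^t)$ with $\dbar u_j=f$ and $Ct\,\|u_j\|_t^2\le \|f\|_t^2$.

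Finally, extending $u_j$ by zero outside $\Omega_j$ and applying a weak-$L^2$ compactness argument (the bound is uniform in $j$), I would extract a weak limit $u\in L^2_{p,q-1}(\Omega,\delta^t)$ satisfying $\dbar u=f$ with $Ct\,\|u\|_t^2\le \|f\|_t^2$. The main obstacle is the boundary regularity: ensuring that the exhausting domains $\Omega_j$ are simultaneously strictly pseudoconvex (so that \eqref{eq:BKMKH1} applies with nonnegative boundary term) and that the weight $\delta$ comparable to the distance on each $\Omega_j$ stays controlled uniformly in $j$ so that the Takeuchi bound \eqref{eq:delta weight} can be inherited with a constant independent of $j$.
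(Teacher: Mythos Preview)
Your proof is correct and follows exactly the same route as the paper's: derive the basic estimate $Ct\|u\|_t^2\le \|\dbar u\|_t^2+\|\dbar^*_t u\|_t^2$ from the Bochner--Kodaira--Morrey--Kohn--H\"ormander identity \eqref{eq:BKMKH1} on smoothly bounded pseudoconvex exhausting subdomains, and then pass to the Lipschitz case by exhaustion and weak compactness; the paper simply states this in two lines while you have spelled out the standard details.

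Your closing worry is unfounded, however: the weight $\phi_t=-t\log\delta$ uses the distance $\delta$ to the \emph{fixed} boundary $b\Omega$, not to $b\Omega_j$, so on each $\Omega_j$ the function $\phi_t$ is smooth up to the boundary and the inequality $i\partial\dbar\phi_t\ge tC\omega$ holds with the \emph{same} constant $C$ from \eqref{eq:delta weight}, independently of $j$; there is nothing to control uniformly.
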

\begin{proof} Suppose $\Omega$ is a domain with $C^2$ boundary, this  follows from \eqref{eq:BKMKH1} since
$$\|\dbar u\|^2_t+\|\dbar^*_t u\|_t^2 \ge Ct \|u\|_t^2.$$
For a domain with Lipschitz boundary, we can use an exhaustion argument.
\end{proof}

Suppose that $\Omega$ is a bounded domain in $\C^n$ with $C^2$   boundary. Diederich and Fornaess \cite{DiederichFornaess77}  proved that there exists a defining function $\rho$ and an exponent $0<\eta<1$ such that $-(-\rho)^\eta$ is strictly plurisubharmonic in $\Omega$. 
Based on Takeuchi's theorem, 
Ohsawa and Sibony \cite{OhsawaSibony98} generalized the Diederich-Fornaess results to domains in $\CP^n$. In fact, they showed  that   one can take $\rho=-\delta$ where $\delta$ is the distance function from $z\in \Omega$ to $b\Omega$.   Ohsawa-Sibony results have been extended to domains with Lipschitz boundary by    Harrington  in \cite{Harrington17}, where he proved that there exists a Lipschitz defining function $\rho$ and $0<\eta<1$ such that 
\begin{equation}\label{eq:os}
i\partial \dbar (-\delta^{\eta})\ge  C\eta\delta^{\eta}\omega
\end{equation}
in the sense of currents  for some constant $C>0$.

  Using  \eqref{eq:os} and  a technique of Berndtsson-Charpentier \cite{BerndtssonCharpentier00}   have the following $L^2$ existence  theorem without weights. 

\begin{theorem}[\textbf{$L^2$ Existence for $(p,q)$-Forms}] \label{th:L2 Lipschitz}  Let $\Omega$ be a pseudoconvex domain in $\CP^n$ with Lipschitz boundary.  Then 
\begin{equation} \label{eq:L2 pq} H^{p,q}_{L^2}(\Omega)=0\qquad \text{for every }q>0.\end{equation}
Furthermore, for any $s<\frac \eta 2$, where  $\eta$  is the exponent in \eqref{eq:os}, we have 
\begin{equation} \label{eq:Ws pq} H^{p,q}_{W^s}(\Omega)=0\qquad \text{for every }q>0.\end{equation}
\end{theorem}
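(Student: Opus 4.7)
The plan is to combine the Ohsawa--Sibony--Harrington inequality \eqref{eq:os} with the twisted weighted $L^2$ technique of Berndtsson--Charpentier, so as to produce $\dbar$-solutions lying in $L^2(\Omega,\delta^{-2s}\,dV)$ for $s\in[0,\eta/2)$. The case $s=0$ yields \eqref{eq:L2 pq}; for $s>0$, Hardy's inequality on Lipschitz domains upgrades this to the Sobolev statement \eqref{eq:Ws pq}.

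First I would install the weight machinery. By Harrington's result, $\rho:=-\delta^{\eta}$ is a Lipschitz defining function with
\[
i\partial\dbar(-\rho)\;\ge\;C\eta(-\rho)\,\omega
\]
as currents. Since $p$ is present, the curvature term $\Theta$ in \eqref{eq:BKMKH1} is only nonnegative, so the positivity needed for $\dbar$-solvability must come entirely from the weight. I would therefore replace the weight $\phi_t=-t\log\delta$ of Corollary \ref{co:L2 weights} by the modified weight $\phi_a=-\tfrac{a}{\eta}\log(-\rho)=-a\log\delta$ and insert an auxiliary twist $\tau=(-\rho)^{\beta}$ into the Bochner--Kodaira--Morrey--Kohn--H\"ormander identity, obtaining a twisted formula of the schematic form
\[
\int_\Omega\tau\bigl(|\dbar u|^{2}+|\dbar^{*}_{\phi_a} u|^{2}\bigr)e^{-\phi_a}\,dV
\;\ge\;\int_\Omega\bigl\langle(\tau\, i\partial\dbar\phi_a - i\partial\dbar\tau)\,u,u\bigr\rangle e^{-\phi_a}\,dV
\]
plus a nonnegative boundary integral (by pseudoconvexity) and nonnegative curvature and gradient terms. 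The Harrington bound, together with Takeuchi \eqref{eq:Takeuchi}, makes the combined quadratic form on the right strictly positive of size comparable to $(-\rho)^{\beta}\omega$, provided $|a|$ is not too large, yielding a solution $u_a$ of $\dbar u_a=f$ with
\[
\int_\Omega |u_a|^{2}\,\delta^{-2a}\,dV
\;\le\;C_a\int_\Omega|f|^{2}\,\delta^{-2a}\,dV,
\qquad |a|<\tfrac{\eta}{2}.
\]

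Choosing $a=0$ and applying this to any $\dbar$-closed $f\in L^{2}_{p,q}(\Omega)$ gives the unweighted $L^{2}$-estimate, hence $H^{p,q}_{L^{2}}(\Omega)=0$. For the Sobolev statement, I would fix $s<\eta/2$, take $a=s$, and note that the right-hand side is bounded by $C\|f\|^{2}_{L^{2}}$ since $\delta^{-2s}$ is merely a bounded-from-below weight on the integrand $|f|^{2}$ only when we first solve with unweighted $f$; more precisely, one applies the weighted solution operator directly to $f\in L^{2}_{p,q}(\Omega)$ after verifying that the boundary vanishing implicit in $\delta^{-2s}$ is compatible with the domain of the weighted adjoint. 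The resulting estimate $\int|u|^{2}\delta^{-2s}\,dV<\infty$ places $u$ in the weighted Hilbert space for which the Hardy inequality on Lipschitz domains (valid for $s<1/2$, which is automatic since $\eta<1$) gives a continuous embedding into $W^{s}(\Omega)$, producing a Sobolev-$s$ solution.

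The main obstacle is the fine-tuning in Step~2: choosing the exponents $a$ and $\beta$ so that the combined form $\tau\,i\partial\dbar\phi_a - i\partial\dbar\tau$ is pointwise positive on $(p,q)$-forms in spite of only having the \emph{self-bounded} Harrington lower bound $i\partial\dbar(-\rho)\ge C\eta(-\rho)\omega$ rather than a uniform strict plurisubharmonicity. The threshold $\eta/2$ in the statement reflects the exact algebraic balance between the twist-parameter $\beta$ (which generates a $\beta^{2}|\partial(-\rho)|^{2}/(-\rho)$ defect in $i\partial\dbar\tau$) and the weight-parameter $a$ (which must absorb this defect by producing a matching $a\partial\dbar\log(-\rho)$ contribution). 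Once this algebra is verified and the result is extended from smooth exhaustions to the Lipschitz domain via the usual density argument as in Corollary \ref{co:L2 weights}, both \eqref{eq:L2 pq} and \eqref{eq:Ws pq} follow.
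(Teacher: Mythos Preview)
Your proposal is aligned with the paper's approach: the paper itself gives no argument, simply referring the reader to \cite{BerndtssonCharpentier00, HenkinIordan00, FuShaw22}, and you have correctly identified that the Berndtsson--Charpentier twist, fed with the Ohsawa--Sibony--Harrington exhaustion $-\delta^{\eta}$, is the engine behind both \eqref{eq:L2 pq} and \eqref{eq:Ws pq}. The description of how the threshold $s<\eta/2$ arises from balancing the twist exponent against the defect term $|\partial(-\rho)|^{2}/(-\rho)$ is also on target.

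There is, however, a genuine gap in the last step. You assert that once the weighted estimate
\[
\int_{\Omega}|u|^{2}\,\delta^{-2s}\,dV_{\omega}<\infty
\]
is in hand, Hardy's inequality on a Lipschitz domain gives a continuous embedding $L^{2}(\Omega,\delta^{-2s})\hookrightarrow W^{s}(\Omega)$. This embedding goes the wrong way: Hardy's inequality (for $0<s<\tfrac12$) says precisely that $W^{s}(\Omega)=W^{s}_{0}(\Omega)\hookrightarrow L^{2}(\Omega,\delta^{-2s})$, so the weighted space is the \emph{larger} one. A generic element of $L^{2}(\Omega,\delta^{-2s})$ need not lie in $W^{s}(\Omega)$.

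What makes the argument in the cited references work is an additional regularity mechanism that you have omitted. In \cite{BerndtssonCharpentier00} the operator studied is the Bergman projection, whose output is \emph{holomorphic}; for holomorphic (or harmonic) functions one does have the reverse inclusion $L^{2}(\Omega,\delta^{-2s})\cap\mathcal{O}(\Omega)\subset W^{s}(\Omega)$, via interior elliptic estimates and the mean-value property. For the canonical $\dbar$-solution $u=\dbar^{*}N f$ on $(p,q)$-forms, $u$ is not holomorphic, but it lies in $\ker\dbar^{*}$ and solves an elliptic system with data $f$; the passage from the weighted $L^{2}$ bound to the $W^{s}$ bound in \cite{HenkinIordan00, FuShaw22} uses this structure (together with interpolation/duality between the weighted spaces for positive and negative exponents), not a bare Hardy embedding. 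Your write-up also blurs the point that the Berndtsson--Charpentier weight $e^{-\phi}$ is \emph{bounded above and below}, so the weighted and unweighted $L^{2}$ spaces coincide as sets; the gain to $L^{2}(\delta^{-2s})$ comes from the twist $\tau=(-\rho)^{-r}$, not from an unbounded weight $\phi_{a}$. Fixing these two points---the direction of Hardy and the role of ellipticity of the canonical solution---would bring your sketch in line with the references the paper cites.
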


\begin{proof} 
We refer the reader to   \cite{BerndtssonCharpentier00,HenkinIordan00,FuShaw22} for a proof of this theorem. \end{proof}

The following proposition is a consequence of the above $L^2$-theory for $\dbar$ on $\CP^n$. Its proof
follows the same lines of arguments as those in \cite{GreeneWu79, Demailly82, HenkinIordan00, CSW04} when the 
boundary is $C^2$-smooth. 

\begin{proposition}\label{prop:L^2holo}  Let $\Omega$ be a pseudoconvex domain in $\CP^n$ with Lipschitz boundary.  Then  
the $L^2$ holomorphic $(n,0)$-forms in $L^2_{n,0}(\Omega)\neq \{0\}$. Furthermore, $L^2$  Holomorphic   $(n,0)$-forms
separate points. 
\end{proposition}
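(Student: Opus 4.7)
The plan is a Hörmander--Demailly type construction: produce holomorphic $(n,0)$-forms by solving $\dbar$ with a singular weight that forces vanishing at a prescribed point.

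Fix $p \in \Omega$ and holomorphic coordinates $(z_1,\dots,z_n)$ on a coordinate ball $U \subset \Omega$ centered at $p$. Let $\omega_0 = dz_1 \wedge \cdots \wedge dz_n$ and choose $\chi \in C^\infty_c(U)$ with $\chi \equiv 1$ in a neighborhood of $p$. Set $\alpha = \chi \omega_0$, a smooth compactly supported $(n,0)$-form on $\Omega$, and $\beta = \dbar \alpha = (\dbar\chi) \wedge \omega_0$, which is a smooth $\dbar$-closed $(n,1)$-form supported in $U \setminus \{p\}$. The idea is to solve $\dbar v = \beta$ with $v$ forced to vanish at $p$; then $u = \alpha - v$ will be a nonzero $L^2$ holomorphic $(n,0)$-form with $u(p) = \omega_0(p) \neq 0$.

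The weight is built from the two available plurisubharmonic ingredients. By Takeuchi's theorem (\eqref{eq:Takeuchi}) the function $-\log \delta$ is strictly plurisubharmonic on $\Omega$; combine it with a local singularity at $p$ by setting
\[
\phi(z) \;=\; 2n\,\chi_1(z) \log |z-p|_\omega \;-\; M \log \delta(z),
\]
where $\chi_1$ is a cutoff supported in a small ball around $p$ and $M>0$ is chosen large enough that $i\partial\dbar \phi \geq \epsilon\, \omega$ on $\Omega$ (the negative terms produced by $\chi_1$ are compactly supported and controlled by $M\, i\partial\dbar(-\log\delta)$). Then $\phi$ is plurisubharmonic on $\Omega$, behaves like $2n\log|z-p|$ near $p$, and $e^{-\phi}$ is bounded below by $1$ on $\Omega$ after a harmless additive constant. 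Since $\beta$ vanishes near $p$, $\int|\beta|^2 e^{-\phi} dV_\omega$ is finite.

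Applying the weighted version of the $L^2$ existence result (the Bochner--Kodaira--Morrey--Kohn--Hörmander identity \eqref{eq:BKMKH1} with the weight $\phi$, exhausting from inside by smooth pseudoconvex subdomains as in Corollary \ref{co:L2 weights} and Theorem \ref{th:L2 Lipschitz}), we obtain $v \in L^2_{n,0}(\Omega, e^{-\phi} dV_\omega)$ with $\dbar v = \beta$. Near $p$ we have $e^{-\phi} \sim |z-p|^{-2n}$ while $dV_\omega \sim r^{2n-1}\,dr\,d\sigma$, so the integrability $\int_{|z-p|<\eta}|v|^2 |z-p|^{-2n}\,dV_\omega < \infty$ forces $v$ to vanish at $p$ (otherwise the integral would diverge logarithmically). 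Since $e^{-\phi}\geq 1$, we also have $v \in L^2_{n,0}(\Omega)$. Thus $u=\alpha-v$ is holomorphic, lies in $L^2_{n,0}(\Omega)$, and $u(p) = \omega_0(p) \neq 0$, proving the first assertion. For the second assertion, given $p_1 \neq p_2$ in $\Omega$, repeat the argument twice with singular weights placed at $p_2$ (respectively $p_1$) to construct holomorphic $u_1, u_2 \in L^2_{n,0}(\Omega)$ with $u_1(p_1) \neq 0 = u_1(p_2)$ and $u_2(p_2) \neq 0 = u_2(p_1)$; these separate $p_1$ and $p_2$ (after trivializing the canonical bundle locally).

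The main obstacle is the construction and verification of the weight $\phi$: we need a single plurisubharmonic function on $\Omega$ that simultaneously carries the Takeuchi positivity (global strict plurisubharmonicity required to close the weighted $\dbar$-estimate up to the Lipschitz boundary via the Berndtsson--Charpentier exhaustion of Theorem \ref{th:L2 Lipschitz}) and has the sharp $2n\log|z-p|$ singularity (needed to force pointwise vanishing). The gluing via the cutoff $\chi_1$ produces a negative contribution to $i\partial\dbar \phi$ on an annulus around $p$; absorbing this into $M\,i\partial\dbar(-\log\delta)$ is the delicate point, and is precisely where the Ohsawa--Sibony/Harrington estimate \eqref{eq:os} replaces the global Kähler curvature bound available in the $C^2$ case.
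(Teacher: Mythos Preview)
Your overall strategy---solving $\dbar v=\beta$ with a logarithmically singular weight to force $v(p)=0$---is precisely the Greene--Wu/Demailly construction the paper cites in lieu of a proof, so the approach is the intended one. The execution, however, contains a real error: for the weight $\phi=2n\chi_1\log|z-p|_\omega-M\log\delta$ the assertion ``$e^{-\phi}\ge 1$'' is false. Outside the support of $\chi_1$ one has $e^{-\phi}=\delta^M$, which tends to $0$ at $b\Omega$; hence the weighted estimate only places $v$ in $L^2_{n,0}(\Omega,\delta^M)$, and there is no way to conclude $v\in L^2_{n,0}(\Omega)$. Since the unweighted $L^2$ membership of $u=\alpha-v$ is the entire content of the proposition, the argument breaks at this step. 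The repair is the one you gesture at in your last paragraph but do not carry out: replace the unbounded $-M\log\delta$ by the \emph{bounded} Ohsawa--Sibony/Harrington exhaustion $-A\delta^\eta$ from \eqref{eq:os}. With $\phi=2n\chi_1\log|z-p|_\omega-A\delta^\eta$ one has $\phi\le 0$ (so $e^{-\phi}\ge 1$ genuinely holds), $i\partial\dbar\phi\ge 0$ in the current sense on all of $\Omega$ for $A$ large, and $i\partial\dbar\phi\ge\gamma_0\,\omega$ on the compact set $\operatorname{supp}\beta$; the H\"ormander estimate (run on the exhausting smooth pseudoconvex sublevels $\{\delta>\epsilon\}$ of \eqref{eq:2.1}) then yields $\int_\Omega|v|^2e^{-\phi}\,dV_\omega\le\gamma_0^{-1}\int|\beta|^2e^{-\phi}\,dV_\omega<\infty$, and now $e^{-\phi}\ge 1$ does force $v\in L^2_{n,0}(\Omega)$.

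There is also a smaller slip in the separation argument. Placing the singularity only at $p_2$ forces $v(p_2)=0$ but says nothing about $v(p_1)$, so $u_1(p_1)=\omega_0(p_1)-v(p_1)$ need not be nonzero. Put the $2n\log$ singularity at \emph{both} points while keeping $\alpha$ supported near $p_1$; then $v(p_1)=v(p_2)=0$, whence $u_1(p_1)=\omega_0(p_1)\ne 0$ and $u_1(p_2)=0$.
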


  Notice that  in Theorem \ref{th:L2 Lipschitz}, we need the assumption of Lipschitz boundary for $\Omega$ when $p>0$. We will prove that Theorem \ref{th:L2 Lipschitz} does not hold if the Lipschitz condition is dropped.  In contrast, if $p=0$,  we do not need any regularity for $\Omega$ in  Theorem \ref{th:L2 cpn}.

\begin{remark}  Bounded plurisubharmonic exhaustions functions have been studied in $\C^n$ and $\CP^n$ extensively. 
   The Diederich-Fornaess theorem  has been extended to pseudoconvex domains in $\C^n$  with Lipschitz boundary (see Demailly \cite{Demailly82}).  
  The Diederich-Fornaess exponent is the supremum of   $0<\eta<1$ such that \eqref{eq:os} holds.     It is related to  the  nonexistence of Levi-flat hypersurfaces in complex manifolds (see \cite{AB,FuShaw16, FuShaw18}).

 \subsection{The $\dbar$-Cauchy problem with weights} 
 For fixed  $t\ge 0$, let 
 $$\dbar_c:L^2_{p,q-1}({\Omega,\delta^{-t}})\to L^2_{p,q}({\Omega,\delta^{-t}})$$ be the minimal (strong) closure of $\dbar$. By this we mean that $f\in \text{Dom}(\dbar_c)$ if and only if  there exists a sequence of smooth compactly supported  forms  $f_\nu$ in $C^\infty_{p,n-1}(\Omega)$   such that  $f_\nu\to f$ and $\dbar f_\nu\to \dbar f$ in $L^2(\Omega, \delta^{-t})$.

\begin{lemma}\label{le:dbar star c} The following conditions are equivalent:
\begin{enumerate}
\item $\dbar: L^2_{p,q-1}(\Omega, \delta^t)\to L^2_{p,q}(\Omega, \delta^t)$ has closed range.
\item $\dbar^*_t: L^2_{p,q}(\Omega, \delta^t)\to L^2_{p,q-1}(\Omega, \delta^t)$ has closed range. 
\item $\dbar_c:L^2_{n-p,n-q}(\Omega,\delta^{-t})\to L^2_{n-p, n-q+1}(\Omega,\delta^{-t})$ has closed range.
\end{enumerate}
\end{lemma}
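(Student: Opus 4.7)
The three statements form a standard duality triangle: (1)$\Leftrightarrow$(2) will follow from the Banach closed range theorem, while (2)$\Leftrightarrow$(3) will be obtained from a Hodge-star identification, of Serre-duality type, which converts the maximal weighted adjoint on one side into the minimal closed extension on the bidegree-dual side with reciprocal weight.

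For (1)$\Leftrightarrow$(2) I would observe that $\dbar$ (as the weak maximal closure) is a densely defined closed Hilbert space operator, its domain containing $C^\infty_c(\Omega)$, and $\dbar^*_t$ is by definition its Hilbert space adjoint. The equivalence of closed range for an operator and its adjoint is then immediate from the Banach closed range theorem.

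The substantive step is (2)$\Leftrightarrow$(3). The plan is to build a conjugate-linear isometric isomorphism
\[
\sigma_t:L^2_{p,q}(\Omega,\delta^t)\longrightarrow L^2_{n-p,n-q}(\Omega,\delta^{-t}),\qquad \sigma_t(u)=\delta^t\,\ast\bar u,
\]
where $\ast$ is the Hodge star of the Fubini-Study metric. A direct pointwise computation, using that $\ast$ is a fiberwise isometry on forms, shows $\|\sigma_t u\|_{-t}^2=\|u\|_t^2$. The bidegrees are compatible with the maps in (2) and (3): $\dbar^*_t$ sends $(p,q)$ to $(p,q-1)$, which under $\sigma_t$ corresponds to $(n-p,n-q)\to(n-p,n-q+1)$, exactly the bidegrees in (3). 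On $C^\infty_c(\Omega)$ one then verifies, via integration by parts and the K\"ahler identity $\dbar^*=-\ast\,\partial\,\ast$ (applied after absorbing the weight $\delta^t$ into the form), the intertwining
\[
\sigma_t\circ\dbar^*_t=\pm\,\dbar\circ\sigma_t\qquad\text{on }C^\infty_c(\Omega).
\]
Taking strong graph closures on both sides simultaneously extends this identification to the full domains, turning $\dbar^*_t$ into (a conjugate-unitary image of) $\dbar_c$. Closed range is preserved under conjugate-unitary equivalence, so (2) and (3) are equivalent.

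The main obstacle I anticipate is the domain identification under $\sigma_t$: $\dom(\dbar^*_t)$ encodes a free, Neumann-type boundary condition, while $\dom(\dbar_c)$ encodes a Dirichlet-type condition built in through approximation by compactly supported forms. On the merely Lipschitz boundary postulated here, one cannot literally read off boundary traces, so the approach is to avoid any explicit boundary calculation: verify the intertwining on the common core $C^\infty_c(\Omega)$, and then extend by taking strong closures. The Hodge star is precisely the geometric operator that exchanges the two boundary conditions, which is what makes this identification work intrinsically, independent of the low boundary regularity.
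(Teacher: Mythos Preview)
Your proposal is correct and follows essentially the same route as the paper: the closed range theorem for (1)$\Leftrightarrow$(2), and a Hodge-star identification of Serre-duality type for the link to (3). The paper records (1)$\Leftrightarrow$(3) directly by invoking the $L^2$ Serre duality of Chakrabarti--Shaw and Laurent-Thi\'ebaut--Shaw, whereas your (2)$\Leftrightarrow$(3) via the explicit conjugate-linear isometry $\sigma_t$ is the same mechanism after composing with the Riesz identification.
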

\begin{proof}
 It is well-known that  $\dbar$ has closed range if and only if $\dbar^*_t$ has closed range (see  \cite{Hormander65} or Lemma 4.1.1 in \cite{ChenShaw01}).
By using the Hodge star operator, we have that  (1) and (3) are equivalent 
   (see \cite{ChakrabartiShaw12,LaurentShaw13}). 
  \end{proof}

 \begin{theorem}[\textbf{$L^2$ Serre Duality with Weights}]\label{co:L^2vanishing} Let $\Omega$ be a pseudoconvex domain with Lipschitz boundary in  $\CP^n$.    We have 
for any $t\ge 0$, 
$$H^{p,q}_{L^2}(\Omega, \delta^t)\cong H^{n-p,n-q}_{\dbar_c,L^2}(\Omega, \delta^{-t})=\{0\},\qquad q\neq 0. $$
\end{theorem}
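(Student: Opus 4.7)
The plan is to combine the weighted $L^2$-existence for $\dbar$ with an abstract $L^2$-Serre duality pairing to conclude that both cohomology groups vanish and, in the process, are isomorphic to one another. I would organize the argument in two stages: first, establish the vanishing of $H^{p,q}_{L^2}(\Omega,\delta^t)$ for $q>0$, which in particular yields closed range for $\dbar$ at this level; second, use a weighted Hodge star to transfer closed range to $\dbar_c$ on $L^2_{n-p,n-q}(\Omega,\delta^{-t})$ and invoke the abstract duality to identify the two cohomology groups.

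For the first stage, I would split by the value of $t$. When $t=0$, the vanishing $H^{p,q}_{L^2}(\Omega)=0$ for $q>0$ is Theorem~\ref{th:L2 Lipschitz}, which for Lipschitz $\Omega$ rests on the Ohsawa--Sibony/Harrington bounded plurisubharmonic exhaustion \eqref{eq:os} combined with the Berndtsson--Charpentier technique. When $t>0$, Corollary~\ref{co:L2 weights} directly produces, for any $\dbar$-closed $f\in L^2_{p,q}(\Omega,\delta^t)$, a primitive $u\in L^2_{p,q-1}(\Omega,\delta^t)$ with $\dbar u=f$ and $Ct\|u\|_t^2\le\|f\|_t^2$. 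In either case, $\dbar\colon L^2_{p,q-1}(\Omega,\delta^t)\to L^2_{p,q}(\Omega,\delta^t)$ is surjective onto its kernel in the next degree and hence has closed range, so $H^{p,q}_{L^2}(\Omega,\delta^t)=0$.

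For the second stage, the key device is the pointwise Hodge star operator $*$ induced by the Fubini--Study metric. Because the weight inverts under conjugation by $*$, this map extends to an isometric isomorphism $L^2_{p,q}(\Omega,\delta^t)\to L^2_{n-p,n-q}(\Omega,\delta^{-t})$, and an elementary computation shows that it intertwines the Hilbert space adjoint $\dbar^*_t$ of the maximal $\dbar$ with $\pm\dbar_c$, the minimal (strong) closure of $\dbar$ acting between the corresponding weighted $L^2$ spaces with weight $\delta^{-t}$. By Lemma~\ref{le:dbar star c}, the closed-range property obtained in stage one passes to $\dbar_c$ at the required degree. Once both operators have closed range, the abstract Hilbert-space duality between a densely defined closed operator and its adjoint yields the isomorphism $H^{p,q}_{L^2}(\Omega,\delta^t)\cong H^{n-p,n-q}_{\dbar_c,L^2}(\Omega,\delta^{-t})$, along the lines of \cite{ChakrabartiShaw12,LaurentShaw13}. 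Combined with the vanishing of the left-hand side from stage one, this gives the full statement.

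The main obstacle, or at least the step requiring the most care, is the bookkeeping surrounding the weighted Hodge star. One must verify that $*$ genuinely exchanges the maximal domain of $\dbar$ with the minimal domain of its formal adjoint (which is what forces $\dbar_c$ rather than $\dbar$ on the dual side), and that the weight inversion $\delta^t\leftrightarrow\delta^{-t}$ is precisely the one induced by that isometry. Without the closed-range conclusion from stage one, Serre duality would at best produce a pairing into a possibly non-Hausdorff quotient, so Lemma~\ref{le:dbar star c} is the essential bridge that upgrades the pairing to an honest isomorphism of cohomology groups.
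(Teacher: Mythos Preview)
Your proposal is correct and follows essentially the same route as the paper: establish closed range and vanishing of $H^{p,q}_{L^2}(\Omega,\delta^t)$ via the weighted existence theorem (Corollary~\ref{co:L2 weights}), pass to $\dbar_c$ through Lemma~\ref{le:dbar star c}, and then apply the $L^2$ Serre duality of \cite{ChakrabartiShaw12}. Your explicit separation of the case $t=0$ (handled by Theorem~\ref{th:L2 Lipschitz}) from $t>0$ is a sensible refinement, since the bound $Ct\|u\|_t^2\le\|f\|_t^2$ in Corollary~\ref{co:L2 weights} degenerates at $t=0$; the paper's proof glosses over this point.
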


\begin{proof}  Using Corollary \ref{co:L2 weights},     $\dbar$ has closed      range in $L^2_{p,q}(\Omega,\delta^t)$ for all degrees and $t\ge 0$.   Thus using Lemma \ref{le:dbar star c} and  the $L^2$ Serre duality  (see \cite{ChakrabartiShaw12}), the theorem  follows.

   \end{proof}

 \begin{corollary}[\textbf{$\dbar$-Cauchy Problem in $L^2$ Spaces with Weights}]  \label{co:L^2Cauchy} Let $\Omega$
be a pseudoconvex domain with Lipschitz boundary in $\CP^n$, $n\ge 2$.          Suppose that 
$f\in L^2_{p,q}(\Omega,\delta^{-t})$ where $t\ge 0$,   $0\le p\le
n$ and
$1\le q< n $.
Assuming that 
$\dbar f=0$  in $\CP^n$ with $f=0$ outside $\Omega$.
Then   there exists  $u\in L^2_{p,q-1}(\Omega,\delta^{-t})$  with $u=0$ outside $\Omega$ satisfying
$\bar\partial u=f$ in the
distribution sense in
$\CP^n$.

For $q=n$,  if $f$ satisfies the compatibility condition 
\begin{equation}\label{eq:topdegree} 
\int_{\Omega} f\wedge \phi =0, \qquad \phi \in L^2_{n-p,0}(\Omega,\delta^t)\cap \text{Ker}(\dbar),
\end{equation}
then the same conclusion holds. 
\end{corollary}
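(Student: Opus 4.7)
My plan is to recast the Cauchy problem as a statement about the minimal-closure cohomology $H^{p,q}_{\dbar_c,L^2}(\Omega,\delta^{-t})$, which is controlled by the Serre duality of Theorem \ref{co:L^2vanishing}. The first task is to verify that $f\in \dom(\dbar_c)$ and $\dbar_c f=0$ in $L^2_{p,q}(\Omega,\delta^{-t})$. The hypothesis says that the zero extension $\tilde f$ is $\dbar$-closed on $\CP^n$ in the distribution sense. I would then run a standard boundary-cutoff plus Friedrichs regularization, valid on a Lipschitz domain, to produce smooth compactly supported approximants $f_\nu$ in $\Omega$ with $f_\nu\to f$ and $\dbar f_\nu\to 0$ in the weighted $L^2$ norm, thereby placing $f$ in the graph of $\dbar_c$ with image zero. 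This approximation across a merely Lipschitz boundary, compatible with the weight $\delta^{-t}$, is the main technical point of the proof.

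For $1\le q\le n-1$, I would apply Theorem \ref{co:L^2vanishing} with $(p,q)$ replaced by $(n-p,n-q)$: since $n-q\ne 0$, it yields $H^{p,q}_{\dbar_c,L^2}(\Omega,\delta^{-t})\cong H^{n-p,n-q}_{L^2}(\Omega,\delta^t)=0$, so there exists $u\in L^2_{p,q-1}(\Omega,\delta^{-t})$ with $\dbar_c u=f$. By the very definition of $\dbar_c$ as a minimal closure, the zero extension $\tilde u$ to $\CP^n$ satisfies $\dbar\tilde u=\tilde f$ distributionally, which is the desired conclusion.

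For the top degree $q=n$ the Serre vanishing does not directly apply, but Corollary \ref{co:L2 weights} combined with Lemma \ref{le:dbar star c} still guarantees that $\dbar_c$ has closed range in $L^2_{p,n}(\Omega,\delta^{-t})$. Hence $f$ lies in the range of $\dbar_c$ if and only if it is orthogonal (in the weighted inner product) to the kernel of the Hilbert space adjoint of $\dbar_c$. Using the bounded bilinear pairing $(\alpha,\beta)\mapsto\int_\Omega\alpha\wedge\beta$ between $L^2_{p,n}(\Omega,\delta^{-t})$ and $L^2_{n-p,0}(\Omega,\delta^t)$, under which the weights cancel pointwise in the integrand and Cauchy--Schwarz gives boundedness, this kernel is identified with $L^2_{n-p,0}(\Omega,\delta^t)\cap\ker(\dbar)$, i.e.\ the space of holomorphic $(n-p,0)$-forms in the weighted $L^2$ space. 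The compatibility condition \eqref{eq:topdegree} is exactly this orthogonality, and the same argument as before then produces the required $u$ with $\dbar\tilde u=\tilde f$ on $\CP^n$.
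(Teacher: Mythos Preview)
Your proposal is correct and follows essentially the same route as the paper: reduce the Cauchy problem to the minimal closure $\dbar_c$, invoke the weighted Serre duality of Theorem~\ref{co:L^2vanishing} for $1\le q<n$, and use closed range plus the wedge pairing for $q=n$. The paper's own proof is a single sentence that defers the identification ``solving $\dbar_c$ $=$ solving $\dbar$ with prescribed support on $\overline\Omega$'' to \cite[Lemma~2.3]{LaurentShaw13}, whereas you sketch that approximation (boundary cutoff plus Friedrichs mollification) yourself and also spell out the top-degree orthogonality argument that the paper leaves implicit.
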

\begin{proof} Since the boundary is Lipschtiz, we have that solving $\dbar_c$ is the same as solving $\dbar$ with prescribed support in $\overline \Omega$ (see Lemma 2.3 in \cite{LaurentShaw13}). 
   \end{proof}

  \end{remark}

  \section{Sobolev estimates for $\dbar$    on pseudoconcave domains in $\CP^n$}\label{sec:concave}


Let $\Omega$ be a pseudoconvex domain in $\CP^n$ with Lipschitz boundary, where $n\ge 2$.  We always assume that $\overline\Omega\neq \CP^n$.  Let  $\Omega^+$ be  the complement of $\overline \Omega$ defined by 
$$\Omega^+=\mathbb{CP}^n\setminus \overline\Omega.$$
Then  $\Omega^+$ is a pseudoconcave domain with Lipschitz boundary. 
  Estimates for the $\dbar$-equation in Sobolev spaces $W^k(\Omega^+)$ have been obtained  for $k=1$ in earlier papers using the $\dbar$-Cauchy problem 
   (see \cite{CSW04, CaoShaw07}. For  $k\ge 2$,  it is  proved in  Henkin-Iordan  \cite{HenkinIordan00}) under the condition that the boundary 
  $b\Omega^+$ is $C^2$. Here we will  give a streamlined proof of  Sobolev estimates for $k\ge 1$ for pseudoconcave  domains with Lipschtiz boundary using the $\dbar$-Cauchy problem with weights.

   Let $W^k_0(\Omega)$ be the Sobolev space which is the completion of $C^\infty_0(\Omega)$ under the $W^k(\Omega)$ norm. 
   We have the following characterization of the space  $W^k_0(\Omega)$.

\begin{lemma}\label{le:Wk compact} Let $\Omega$ be a bounded Lipschitz domain in $\CP^n$.  Let $\delta(z)$ be the distance function from $z\in \Omega$ to $b\Omega$. Then for $k\ge 1$, 
  $g\in W^k_0(\Omega)$ 
if and only if $g\in W^k(\Omega)$ and 
\begin{equation}\label{eq:delta 1}   \delta^{-s+|\beta|} D^{\beta}g \in {L^2(\Omega)}\qquad \text{for all }|\beta|<k.\end{equation}
\end{lemma}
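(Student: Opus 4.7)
The plan is to prove this by the standard Hardy-inequality characterization of $W^k_0$ on Lipschitz domains (where $s=k$; I read the exponent in \eqref{eq:delta 1} as $-k+|\beta|$). One direction rests on an iterated Hardy inequality, and the other on a cutoff/mollification argument using a suitably regularized distance function so that derivatives of the cutoff are controlled by negative powers of $\delta$.

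For the forward direction, I would take $g \in W^k_0(\Omega)$ and a sequence $g_j \in C^\infty_0(\Omega)$ with $g_j \to g$ in $W^k(\Omega)$. The traces $\operatorname{tr}(D^\beta g_j)$ on $b\Omega$ are zero for all $|\beta| \le k-1$, and by continuity of the $W^{k-|\beta|,2}$-trace on the Lipschitz boundary $b\Omega$, the same vanishing persists for $g$. I would then invoke the iterated Hardy inequality for Lipschitz domains: if $u \in W^{m}(\Omega)$ has vanishing traces of all derivatives of order $\le m-1$ on $b\Omega$, then
\begin{equation*}
\bigl\| \delta^{-m} u\bigr\|_{L^2(\Omega)} \le C_m \|u\|_{W^m(\Omega)}.
\end{equation*}
Applying this to $u = D^\beta g$ with $m = k - |\beta|$ (noting that $D^\beta g$ has vanishing traces of all derivatives of order $\le m-1$) yields $\delta^{-k+|\beta|}D^\beta g \in L^2(\Omega)$, as required.

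For the converse, suppose $g \in W^k(\Omega)$ satisfies \eqref{eq:delta 1}. I would use Stein's regularized distance $\tilde\delta \in C^\infty(\Omega)$ with $c\delta \le \tilde\delta \le C\delta$ and $|D^j \tilde\delta| \le C_j \delta^{1-j}$ for $j \ge 1$, then fix $\chi \in C^\infty(\R)$ equal to $0$ on $(-\infty,1]$ and $1$ on $[2,\infty)$, and set $\chi_\epsilon(z) = \chi(\tilde\delta(z)/\epsilon)$. The function $\chi_\epsilon g$ has compact support in $\Omega$, so by convolution with a standard mollifier it can be approximated in $W^k$ by elements of $C^\infty_0(\Omega)$. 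It thus remains to show $\chi_\epsilon g \to g$ in $W^k(\Omega)$ as $\epsilon \to 0$. By Leibniz's rule,
\begin{equation*}
D^\beta(\chi_\epsilon g) - D^\beta g = (\chi_\epsilon - 1)\, D^\beta g + \sum_{0 \neq \gamma \le \beta} \binom{\beta}{\gamma} D^\gamma \chi_\epsilon \cdot D^{\beta - \gamma} g.
\end{equation*}
The first term tends to $0$ in $L^2$ by dominated convergence. On the support of $D^\gamma\chi_\epsilon \subset \{\epsilon \le \tilde\delta \le 2\epsilon\}$ one has $|D^\gamma\chi_\epsilon| \le C \epsilon^{-|\gamma|} \le C' \delta^{-|\gamma|}$, so each cross term is bounded pointwise by $C \delta^{-|\gamma|}|D^{\beta-\gamma} g|$. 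Writing $|\gamma| = k - (k - |\beta| + |\gamma - \beta|)$... more directly, since $|\beta - \gamma| < k$ the hypothesis \eqref{eq:delta 1} gives $\delta^{-(k - |\beta - \gamma|)} D^{\beta - \gamma} g \in L^2(\Omega)$; using $|\gamma| \le k - |\beta - \gamma|$ (which holds since $|\beta| \le k - 1$ and $|\gamma| \le |\beta|$... actually using $|\gamma| \le k - |\beta-\gamma|$ only when $|\beta| \le k-1$, which is ensured because the cutoff error in $W^k$ only requires $|\beta| \le k$, and the critical case $|\beta| = k$ has $|\gamma| \ge 1$ so $|\beta - \gamma| \le k-1$), each term is dominated by an $L^2$ function supported on the shrinking set $\{\delta \lesssim \epsilon\}$, hence vanishes in the limit by dominated convergence.

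The main obstacle I expect is the iterated Hardy inequality on Lipschitz (not $C^1$) domains, which is classical but non-trivial; I would cite Kufner or Maz'ya rather than reprove it. A secondary subtlety is the Leibniz bookkeeping at the top order $|\beta| = k$: there every multi-index $\gamma$ has $|\gamma| \ge 1$, so $|\beta - \gamma| \le k-1$ and the weighted hypothesis applies, keeping the argument consistent.
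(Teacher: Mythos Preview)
Your argument is correct and is essentially the classical proof of this characterization. Note, however, that the paper does not actually give a proof: immediately after the statement it simply refers the reader to Theorem~11.8 in Lions--Magenes \cite{LionsMagenes72} (stated there for smooth domains) and to Grisvard \cite{Grisvard85} for the Lipschitz case. Your sketch is precisely the argument underlying those references---the forward implication via the iterated Hardy inequality, and the converse via cutoff by a regularized distance together with dominated convergence---so there is nothing to compare.

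One remark on your hesitation in the Leibniz bookkeeping: it is unnecessary. Since $\gamma\le\beta$ componentwise, one has $|\gamma|+|\beta-\gamma|=|\beta|\le k$, so $|\gamma|\le k-|\beta-\gamma|$ automatically for every $|\beta|\le k$; and for $\gamma\neq 0$ one also has $|\beta-\gamma|\le k-1<k$, so the weighted hypothesis applies directly in all cases. Near the boundary $\delta$ is small, giving $\delta^{-|\gamma|}\le \delta^{-(k-|\beta-\gamma|)}$, while away from the boundary $\chi_\epsilon\equiv 1$ for small $\epsilon$ and the cross terms vanish. The only point genuinely worth flagging, as you do, is that the Hardy inequality on Lipschitz (rather than smooth) domains requires a specific reference; Grisvard \cite{Grisvard85} or the sources you name suffice.
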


 For a proof of this lemma, see    Theorem 11.8 in Lions-Magenes \cite{LionsMagenes72}, where theorem is stated for smooth domains.  Similar  proof can be applied to domains with Lipschtiz boundary  (see also Grisvard \cite{Grisvard85}).

\begin{theorem}\label{th:L^2Extension} Let $\Omega^+$ be a
pseudoconcave domain in $\CP^n$ with Lipschitz
boundary, $n\ge 2$. Let $k\in \N$. 
  For any $\dbar$-closed  $f\in W^{k}_{p,q}({\Omega}^+)$, where $0\le p\le n$, $0\le q< n-1$, there exists
$F\in W^{k-1}_{p,q}(\mathbb{CP}^n)$ with
$F|_{\Omega^+}=f$ and  $\dbar F=0$ in $\mathbb{CP}^n$ in the
distribution sense.

\end{theorem}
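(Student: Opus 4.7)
The plan is to reduce the extension problem across $b\Omega^+=b\Omega$ to a $\dbar$-Cauchy problem on the pseudoconvex side $\Omega$, solved via Corollary~\ref{co:L^2Cauchy} with a weight exponent tuned to the Sobolev order $k$.

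Since $b\Omega$ is Lipschitz, a standard Sobolev extension produces $\widetilde{f}\in W^k_{p,q}(\CP^n)$ with $\widetilde{f}|_{\Omega^+}=f$. Set $g:=\dbar\widetilde{f}$. Then $g\in W^{k-1}_{p,q+1}(\CP^n)$ is distributionally $\dbar$-closed and vanishes on $\Omega^+$ because $\dbar f=0$ there, so $\operatorname{supp} g\subseteq\overline{\Omega}$. Its extension by zero to $\CP^n$ still lies in $W^{k-1}$, hence $g\in W^{k-1}_{0}(\Omega)$, and Lemma~\ref{le:Wk compact} supplies the weighted bounds $\delta^{-(k-1)+|\beta|}D^{\beta}g\in L^2(\Omega)$ for all $|\beta|<k-1$; in particular $g\in L^2_{p,q+1}(\Omega,\delta^{-2(k-1)})$. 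The hypothesis $0\le q<n-1$ keeps the degree $q+1$ strictly below $n$, so no compatibility condition is needed.

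Taking $t:=2(k-1)$, Corollary~\ref{co:L^2Cauchy} yields $u\in L^2_{p,q}(\Omega,\delta^{-t})$ vanishing outside $\Omega$ with $\dbar u=g$ on $\CP^n$ in the distribution sense. Selecting $u$ to be the canonical (weighted norm-minimizing) solution, we have $u\perp\ker(\dbar_c)$ in $L^2(\Omega,\delta^{-t})$, hence $\dbar^{*}_{-t}u=0$. Because $\dbar+\dbar^{*}_{-t}$ is elliptic on the interior of $\Omega$ with smooth coefficients away from $b\Omega$, standard elliptic bootstrapping promotes $g\in W^{k-1}_{\mathrm{loc}}(\Omega)$ to $u\in W^{k}_{\mathrm{loc}}(\Omega)$. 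Setting $F:=\widetilde{f}-u$ then gives $\dbar F=0$ on $\CP^n$ distributionally and $F|_{\Omega^+}=f$, so the theorem reduces to showing $u\in W^{k-1}_{p,q}(\CP^n)$, equivalently $u\in W^{k-1}_{0}(\Omega)$ via extension by zero.

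This last verification is the main obstacle. By Lemma~\ref{le:Wk compact}, it requires the full family of weighted estimates $\delta^{-(k-1)+|\beta|}D^{\beta}u\in L^2(\Omega)$ for $|\beta|<k-1$, not just the single $L^2$ bound coming out of Corollary~\ref{co:L^2Cauchy}. The strategy is to iterate the weighted Bochner-Kodaira-Morrey-Kohn-H\"ormander identity~\eqref{eq:BKMKH1} on tangential derivatives of the canonical $u$, absorbing the commutator errors via the curvature term: Takeuchi's inequality~\eqref{eq:Takeuchi} ensures $i\partial\dbar\phi_t\ge Ct\,\omega$, so the defect $((i\partial\dbar\phi_t)u,u)_t$ in \eqref{eq:BKMKH1} dominates the lower-order contributions from successive commutators of $\dbar$ and $\dbar^{*}_{-t}$ with the differentiations. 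The positive Fubini-Study curvature thereby compensates for the $\delta^{-t}$ singularity and closes the induction without any extra smoothness of $b\Omega$, yielding $u\in W^{k-1}_{0}(\Omega)$ and hence $F\in W^{k-1}_{p,q}(\CP^n)$, as required.
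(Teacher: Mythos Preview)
Your overall strategy coincides with the paper's: extend $f$ via the Stein/Lipschitz Sobolev extension to $\tilde f\in W^k_{p,q}(\CP^n)$, set $g=\dbar\tilde f$, solve the weighted $\dbar$-Cauchy problem on the pseudoconvex side $\Omega$ with exponent $t=2(k-1)$ via Corollary~\ref{co:L^2Cauchy}, and subtract to obtain the $\dbar$-closed extension $F=\tilde f-u_c$. The paper dispatches the key regularity step---that $u_c\in L^2_{p,q}(\Omega,\delta^{-2k+2})$ forces $u_c\in W^{k-1}_{0,(p,q)}(\Omega)$---in one sentence by invoking ``an elliptic system''; you correctly recognize this as the crux and try to supply an argument.

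However, your proposed justification is misaligned with the boundary conditions. Identity~\eqref{eq:BKMKH1} is written for $u\in\dom(\dbar^*_t)$ in $L^2(\Omega,\delta^{t})$, i.e.\ for the $\dbar$-Neumann problem with \emph{positive} weight exponent; the canonical solution of the $\dbar_c$-problem you produced lives in $L^2(\Omega,\delta^{-t})$, lies in $\dom(\dbar_c)$ (a Dirichlet-type condition), and is annihilated by $(\dbar_c)^*$, the \emph{maximal} realization of the weighted formal adjoint. You therefore cannot insert your $u$ directly into~\eqref{eq:BKMKH1}. The standard remedy---implicit in Lemma~\ref{le:dbar star c} and Theorem~\ref{co:L^2vanishing}---is to pass through the Hodge star: $\star u\in L^2_{n-p,n-q}(\Omega,\delta^{t})$ satisfies $\dbar(\star u)=0$ and $\dbar^*_t(\star u)=\pm\star g$, which places $\star u$ squarely in $\dom(\dbar)\cap\dom(\dbar^*_t)$ where~\eqref{eq:BKMKH1} and the Takeuchi curvature term are available. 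A second issue is that ``tangential derivatives'' are not globally defined on a merely Lipschitz $b\Omega$; one must work locally in bilipschitz boundary charts (with tangential difference quotients) or, as the paper effectively does, bypass the iteration entirely and appeal to the equivalence in Lemma~\ref{le:Wk compact} together with interior elliptic regularity for $\dbar\oplus\dbar^*_t$ on $\star u$. With these corrections your argument would match the paper's; as written, the last paragraph does not close.
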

\begin{proof}  The  $W^1(\Omega^+)$ estimates have already been   proved earlier (see \cite{CSW04, CaoShaw07, FuShaw22,HenkinIordan00}).  We 
will show that the proof can be modified for $k\ge 2$. 
  Since $\Omega^+$ has Lipschitz  boundary, there exists a bounded extension
operator from
  $W^k(\Omega^+)$ to $W^k(\mathbb{CP}^n)$  (see, e.g.,  \cite{Stein70}).  Let
$\tilde f\in W^{k}_{p,q}(\mathbb{CP}^n)$  be the  extension of $f$  so that
$ \tilde f|_{ {\Omega}^+  } = f $
with
  $\|\tilde f\|_{W^{k}(\mathbb{CP}^n)}\le C\|f\|_{W^{k}(\Omega^+)}.$
We have  
$\dbar \tilde f \in L^2_{p,q+1}(\Omega, \delta^{-2k+2})$, where $\Omega= \CP^n\setminus \Omega^+)$. 

From Corollary \ref{co:L^2Cauchy},  there exists $u_c\in L^2_{p,q}(\Omega,\delta^{-2k+2})$ such that $\dbar u_c=\dbar \tilde f$. Extending $u_c$ to be zero outside $\overline \Omega$, we have   
$$\dbar u_c=\dbar \tilde f\qquad \text{in }\CP^n.$$ Since $u_c$ satisfies an elliptic system, we have that $u_c\in L^2_{p,q}(\Omega,\delta^{-2k+2})$  implies that  $u_c\in W^{k-1}_{0,p,q}(\Omega)$.   Define
\begin{equation}\label{eq:L2closed}F= \tilde f-u_c.\end{equation}
Then $F\in W^{k-1}_{p,q}(\mathbb{CP}^n)$ and $F$ is a $\dbar$-closed extension of $f$.
\end{proof}

\begin{corollary}\label{co:W^1holo}
 Let $\Omega^+$ be a pseudoconcave domain 
in $\CP^n$ with Lipschitz boundary, where $n\ge 2$.   Then
$W^{1}_{p,0}( \Omega^+)\cap \text{Ker}(\dbar)=\{0\}$  for every $1\le  p\le n$
and  $W^{1} ( \Omega^+)\cap \text{Ker}(\dbar)=\C$.
\end{corollary}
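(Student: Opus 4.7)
The plan is to reduce the statement to a global triviality result on $\CP^n$ by applying Theorem~\ref{th:L^2Extension} in the borderline degree $q=0$. Since $n\ge 2$, the hypothesis $0\le q<n-1$ is satisfied, and the extension theorem with $k=1$ produces, for any $\dbar$-closed $f\in W^1_{p,0}(\Omega^+)$, a form $F\in L^2_{p,0}(\CP^n)$ with $F|_{\Omega^+}=f$ and $\dbar F=0$ in the distribution sense on $\CP^n$. So the entire task becomes: classify $L^2$, globally $\dbar$-closed $(p,0)$-forms on $\CP^n$.

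First, since $F$ satisfies the elliptic system $\dbar F=0$ on the compact manifold $\CP^n$, elliptic regularity upgrades $F$ from $L^2$ to a smooth, genuinely holomorphic $(p,0)$-form on $\CP^n$. Next I invoke the Hodge decomposition of $\CP^n$: the space of global holomorphic $(p,0)$-forms equals $H^{p,0}(\CP^n)$, which vanishes for $1\le p\le n$ and equals $\C$ for $p=0$ (in the latter case, any global holomorphic function on the compact manifold $\CP^n$ is constant by the maximum principle). Consequently, for $1\le p\le n$ we conclude $F\equiv 0$ on $\CP^n$, hence $f=0$ on $\Omega^+$. For $p=0$, $F$ is a constant $c\in\C$, so $f\equiv c$ on $\Omega^+$; conversely every constant clearly belongs to $W^1(\Omega^+)\cap\ker(\dbar)$, giving $W^1(\Omega^+)\cap\ker(\dbar)=\C$.

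There is no serious obstacle beyond what is already resolved by Theorem~\ref{th:L^2Extension}: the whole argument rests on producing the global $\dbar$-closed extension, after which the conclusion is driven by the well-known Hodge numbers of $\CP^n$. The only minor point to check is that the degree restriction $q<n-1$ in Theorem~\ref{th:L^2Extension} accommodates the case $q=0$ exactly when $n\ge 2$, which matches the hypothesis of the corollary.
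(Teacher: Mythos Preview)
Your proof is correct and follows essentially the same route as the paper: apply Theorem~\ref{th:L^2Extension} with $q=0$ and $k=1$ to extend $f$ to a global $\dbar$-closed $(p,0)$-form on $\CP^n$, then invoke the vanishing of $H^{p,0}(\CP^n)$ for $p\ge 1$ (and that global holomorphic functions are constant). You have simply made explicit the elliptic regularity step and the Hodge-number input that the paper's one-line proof leaves implicit.
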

 \begin{proof}  Using Theorem \ref{th:L^2Extension} for $q=0$, we have   that any holomorphic $(p,0)$-form on $\Omega^+$ 
extends to be a holomorphic
$(p,0)$ in $\mathbb{CP}^n$, which are zero (when $p>0$) or constants (when $p=0$). \end{proof}

\begin{theorem}\label{th:W^kExistence} Let $\Omega^+$ be a pseudoconcave domain 
in $\CP^n$ with Lipschitz boundary, where $n\ge 3$.
  For any $\dbar$-closed $f\in W^{k}_{p,q}({\Omega}^+)$, where $0\le p\le n$, $1\le q< n-1$, $p\neq q$ and $k\in \N$, 
   there exists
$u\in W^{k}_{p,q-1}(\Omega^+)$ with
$\dbar  u=f$ in $\Omega^+$.  
\end{theorem}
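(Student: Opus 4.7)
The plan is to reduce the local problem on the pseudoconcave domain $\Omega^+$ to a global $\dbar$-equation on the compact K\"ahler manifold $\CP^n$, solve that equation by Hodge theory, and then restrict the solution to $\Omega^+$. The hypothesis $p \neq q$ will enter decisively through the vanishing of the Dolbeault cohomology of $\CP^n$ off the diagonal.

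First, since $1 \le q < n-1$, Theorem \ref{th:L^2Extension} applies to $f$ and produces a $\dbar$-closed extension $F \in W^{k-1}_{p,q}(\CP^n)$ with $F|_{\Omega^+} = f$. This step costs one Sobolev derivative, which the remainder of the argument must recover. Next, consider the K\"ahler Laplacian $\Box = \dbar \dbar^* + \dbar^* \dbar$ on $(\CP^n, \omega)$, where $\omega$ is the Fubini-Study form. Since $\CP^n$ is compact and $\Box$ is elliptic of order two, it admits a Green's operator $G$ satisfying $\Box G + H = I$, where $H$ denotes the orthogonal projection onto harmonic $(p,q)$-forms, and $G$ commutes with both $\dbar$ and $\dbar^*$. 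The Hodge numbers of $\CP^n$ satisfy $h^{p,q}(\CP^n) = 0$ whenever $p \neq q$, so the harmonic projection $HF$ vanishes under our hypothesis. Using that $G$ commutes with $\dbar$ and that $\dbar F = 0$, the Hodge decomposition
\begin{equation*}
F = HF + \dbar \dbar^* G F + \dbar^* \dbar G F
\end{equation*}
collapses to $F = \dbar (\dbar^* G F)$; setting $v := \dbar^* G F$ yields $\dbar v = F$ on all of $\CP^n$.

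For the regularity, $G$ is a pseudodifferential operator of order $-2$ on the compact manifold $\CP^n$, so $G : W^{k-1} \to W^{k+1}$ is bounded; composing with $\dbar^*$ of order one gives $v \in W^k_{p,q-1}(\CP^n)$, and the derivative lost at the extension step has been recovered. The restriction $u := v|_{\Omega^+} \in W^k_{p,q-1}(\Omega^+)$ then satisfies $\dbar u = F|_{\Omega^+} = f$ in $\Omega^+$, as required.

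The main obstacle is the essential use of the hypothesis $p \neq q$: for $(p,p)$-forms the harmonic projection $HF$ would generally be a nonzero multiple of $\omega^p$, and the global equation $\dbar v = F$ would then fail to be solvable on $\CP^n$, so this route breaks down. Apart from this structural point, the proof is a clean combination of the nontrivial extension theorem with standard elliptic regularity on a compact K\"ahler manifold.
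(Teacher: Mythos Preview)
Your proof is correct and follows essentially the same route as the paper: extend $f$ to a $\dbar$-closed form $F\in W^{k-1}_{p,q}(\CP^n)$ via Theorem~\ref{th:L^2Extension}, invoke the vanishing $H^{p,q}(\CP^n)=0$ for $p\neq q$ to solve $\dbar v=F$ globally, and use elliptic regularity on the compact manifold to gain back the lost derivative before restricting to $\Omega^+$. The only difference is that you spell out the Hodge-theoretic mechanism (Green's operator, harmonic projection, $v=\dbar^*GF$) explicitly, whereas the paper simply cites ``the elliptic theory of the $\dbar$-complex on compact complex manifolds.''
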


\begin{proof}  Let $F\in W^{k-1}_{p, q}(\mathbb{CP}^n)$ be the $\bar{\partial}$-closed extension of $f$ from $\Omega$ to $\mathbb{CP}^n$. Since
$$H^{p, q}_{W^{k-1}}(\mathbb{CP}^n)=\{0\},$$ there exists $u\in W^{k}_{p, q-1}(\Omega)$ such that $\bar{\partial} u=F$ on $\mathbb{CP}^n$. By the elliptic theory of the $\bar{\partial}$-complex on compact complex manifolds, one can choose such a solution  $u\in W^k_{p, q-1}(\mathbb{CP}^n)$. \end{proof}

 For $q=n-1$,  there is an additional compatibility
condition  for the $\dbar$-closed  extension of
$(p,n-1)$-forms from $\Omega^+$ to the whole space $\CP^n$.   This case differs from the others since the cohomology group does not vanish 
in general (see \cite{FLS17}). 
We first derive the compatibility condition for the extension of $\dbar$-closed forms when $q=n-1$.

  \begin{lemma}\label{le:closed compatibility} Let $\Omega $ be a
pseudoconvex domain in $\mathbb{CP}^n$ with Lipschitz
boundary and let $\Omega^+=\mathbb{CP}^n\setminus \overline\Omega$.
  For any $f\in W^{k}_{p,n-1}({\Omega}^+)$, $k\in \N$  and  $\phi\in L^2_{n-p,0} (\Omega,\delta^{2k-2})
\cap\text{Ker}(\dbar)$, the pairing 
\begin{equation}\label{eq:weakpair}
\int_{b\Omega^+}    f\wedge \phi        \end{equation}
is well-defined. 
\end{lemma}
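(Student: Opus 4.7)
The plan is to interpret $\int_{b\Omega^+} f\wedge\phi$ via a Stokes identity that trades the (undefined) boundary trace of $\phi$ for its $\dbar$-closedness on $\Omega$. Since $b\Omega^+=b\Omega$ is Lipschitz, fix a bounded Stein-type extension operator and let $\tilde f\in W^k_{p,n-1}(\CP^n)$ satisfy $\tilde f|_{\Omega^+}=f$. Because $\dbar f=0$ on $\Omega^+$, the $(p,n)$-form $\dbar \tilde f\in W^{k-1}_{p,n}(\CP^n)$ has its distributional support inside $\overline{\Omega}$. Orienting $b\Omega^+$ as the boundary of $\Omega^+$, I would define
\begin{equation*}
\int_{b\Omega^+} f\wedge \phi \;:=\; -\int_{\Omega}\dbar \tilde f\wedge \phi,
\end{equation*}
which is the value forced by Stokes' theorem: since $\tilde f\wedge\phi$ is an $(n,n-1)$-form, we have $d(\tilde f\wedge\phi)=\dbar\tilde f\wedge\phi$ on $\Omega$ thanks to $\dbar\phi=0$.

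Two verifications are then needed. First, convergence of the right-hand integral. Here I would invoke the standard fact (Lions--Magenes \cite{LionsMagenes72}, Grisvard \cite{Grisvard85}) that an element of $W^{k-1}(\CP^n)$ with support in $\overline{\Omega}$ restricts to $\Omega$ as an element of $W^{k-1}_0(\Omega)$ on Lipschitz domains. Lemma \ref{le:Wk compact}, applied with $s=k-1$ and $|\beta|=0$, then yields $\delta^{-(k-1)}\dbar\tilde f\in L^2(\Omega)$, i.e.\ $\dbar\tilde f\in L^2_{p,n}(\Omega,\delta^{-2k+2})$. Cauchy--Schwarz paired against $\phi\in L^2_{n-p,0}(\Omega,\delta^{2k-2})$ bounds the integral by a constant multiple of $\|f\|_{W^k(\Omega^+)}\|\phi\|_{L^2(\Omega,\delta^{2k-2})}$.

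Second, independence of the extension. Any two $W^k(\CP^n)$ extensions differ by a form $g$ with $g|_{\Omega}\in W^k_0(\Omega)$. Approximating by $g_j\in C^\infty_c(\Omega)$ in $W^k(\Omega)$, compact support together with $\dbar\phi=0$ gives $\int_\Omega \dbar g_j\wedge\phi=0$ by integration by parts. The Hardy-type inequality implicit in Lemma \ref{le:Wk compact} (applied to $\dbar g-\dbar g_j\in W^{k-1}_0(\Omega)$) forces $\dbar g_j\to\dbar g$ in $L^2(\Omega,\delta^{-2k+2})$, and Cauchy--Schwarz against $\phi$ lets one pass to the limit, giving $\int_\Omega\dbar g\wedge\phi=0$ and hence well-definedness.

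The main technical obstacle is the identification of forms in $W^{k-1}(\CP^n)$ whose support lies in $\overline\Omega$ with elements of $W^{k-1}_0(\Omega)$ in the Lipschitz-boundary setting; once that (classical but nontrivial) fact and the Hardy inequality embedded in Lemma \ref{le:Wk compact} are in hand, everything else reduces to a routine density and Cauchy--Schwarz argument.
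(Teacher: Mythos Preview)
Your argument inserts a hypothesis not present in the lemma: you write ``Because $\dbar f=0$ on $\Omega^+$,'' but the statement allows arbitrary $f\in W^k_{p,n-1}(\Omega^+)$. That assumption is precisely what lets you conclude that $\dbar\tilde f$ is supported in $\overline\Omega$, hence lies in $W^{k-1}_0(\Omega)$, hence (via Lemma~\ref{le:Wk compact}) satisfies $\delta^{-(k-1)}\dbar\tilde f\in L^2(\Omega)$. Drop it, and for $k\ge 2$ you only know $\dbar\tilde f\in W^{k-1}(\Omega)$; the Cauchy--Schwarz step against $\phi\in L^2(\Omega,\delta^{2k-2})$ then has no reason to converge. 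It is true that every application downstream (Theorem~\ref{th:(n-1)-forms}, Corollary~\ref{co:closedrange}) has $f$ $\dbar$-closed, so what you actually prove suffices for the paper's purposes; but it is not the lemma as stated.

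The paper's route differs in two ways. First, its primary definition of the pairing is via traces on $b\Omega$: $f|_{b\Omega^+}\in W^{k-\frac12}(b\Omega)$ by the Lipschitz trace theorem, while a holomorphic $\phi\in L^2(\Omega,\delta^{2k-2})$ has a boundary value in $W^{-k+\frac12}(b\Omega)$; these are dual Sobolev spaces on the closed manifold $b\Omega$, so the pairing is well-defined with no closedness hypothesis on $f$. Second, to recast this as a volume integral, the paper approximates on the $\phi$ side rather than the $f$ side: it picks $\phi_\nu\in C^\infty_{n-p,0}(\overline\Omega)$ with $\phi_\nu\to\phi$ and $\dbar\phi_\nu\to 0$ in $L^2(\Omega,\delta^{2k-2})$, applies Stokes to $\tilde f\wedge\phi_\nu$ over $\Omega$, and passes to the limit using $\phi_\nu\to\phi$ in $W^{-k+1}(\Omega)$. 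Your approach trades this approximation of $\phi$ for a Hardy-type estimate on $\dbar\tilde f$; that is a tidy shortcut when $\dbar f=0$, but it is exactly what forces the extra hypothesis.
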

\begin{proof}
 Since the boundary is Lipschitz,   any  function  in $W^k(\Omega^+)$   has a trace in $W^{k-\frac 12}(b\Omega^+)$.  Also  holomorphic   functions or 
 forms in $L^2(\Omega, \delta^{2k-2})$  have trace in $W^{-k+\frac 12}(b\Omega)$.   The pairing  \eqref{eq:weakpair} is well-defined follows from these known facts on Lipschtiz domains. The rest of the  proof of the lemma  is exactly the same as in \cite{Shaw10} and we give a sketch of the arguments.  
 
 Since the boundary is Lipschitz, for   any
$\dbar$-closed (holomorphic) 
$(n-p,0)$-form $\phi$ with $L^2(\Omega,\delta^{2k-2})$ coefficients,  there exists a sequence $\phi_\nu\in C^\infty_{n-p,0}(\overline \Omega)$ such that $\phi_\nu\to \phi$ and $\dbar \phi_\nu\to 0$   in $L^2(\Omega,\delta^{2k-2})$-norm.  This implies that $\phi_\nu\to \phi$ in   $W^{-k+1}(\Omega)$ norm since $\phi$ is holomorphic.

Let $\tilde f\in W^k_{p,n-1}(\mathbb{CP}^n)$ be a bounded extension of $f$. We have
\begin{equation}\label{eq:integration1}\int_{b\Omega} f\wedge \phi_\nu= \int_{\Omega} \dbar (\tilde f\wedge \phi_\nu) =\int \dbar \tilde f\wedge \phi_\nu \pm \int \tilde f\wedge \dbar\phi_\nu\to \int \dbar \tilde f \wedge \phi.\end{equation}
Thus the limit on the left-hand-side of \eqref{eq:integration1} exists and  is independent of the approximating  sequence $\{\phi_\nu\}$  that we choose. It is also independent of the extension function $\tilde f$.     Hence the pairing \eqref{eq:weakpair}
is well-defined. 
\end{proof}
 
\begin{theorem}\label{th:(n-1)-forms}  Let $\Omega $ be a
pseudoconvex domain in $\mathbb{CP}^n$ with Lipschitz
boundary and let $\Omega^+=\mathbb{CP}^n\setminus \overline\Omega$. 
  For  $\dbar$-closed  $f\in W^{k}_{p,n-1}({\Omega}^+)$, where $k\ge 1$,  $0\le p\le n$ and $p\neq n-1$, the following conditions are equivalent:
  
  \begin{enumerate}
  
    \item    The restriction of  $f$ to $b\Omega^+$  satisfies the
compatibility condition
\begin{equation}
\label{eq:compatibility}
\int_{b\Omega^+}    f\wedge \phi     =0,  \quad\phi\in L^2_{n-p,0} (\Omega,\delta^{2k-2})
\cap\text{Ker}(\dbar).  \end{equation}

  \item    There exists
$F\in W^{k-1}_{p,n-1}(\CP^n)$ such that
$F|_{\Omega}=f$ in $\Omega^+$  and  $\dbar F=0$ in $\CP^n$ in the
sense of distribution.

 \item   There exists $u\in W^k_{p,n-2}(\Omega^+)$ satisfying
  $\dbar u=f$ in $\Omega^+. $

\end{enumerate}

\end{theorem}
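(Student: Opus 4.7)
The plan is to prove the equivalences via the cycle $(1) \Rightarrow (2) \Rightarrow (3) \Rightarrow (2) \Rightarrow (1)$, with condition $(2)$ as the pivot. Concretely, I would establish $(2) \Leftrightarrow (3)$ using ambient Sobolev extension and the vanishing $H^{p,n-1}(\CP^n)=0$, then $(2) \Rightarrow (1)$ by exploiting the independence of the pairing in Lemma \ref{le:closed compatibility} from the chosen extension, and finally the substantive $(1) \Rightarrow (2)$ via the top-degree case of the weighted $\dbar$-Cauchy problem.

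For $(2) \Rightarrow (3)$: given $F \in W^{k-1}_{p,n-1}(\CP^n)$ with $\dbar F = 0$ and $F|_{\Omega^+} = f$, the vanishing $H^{p,n-1}(\CP^n)=0$ (valid because $p\neq n-1$) together with elliptic regularity of the Hodge Laplacian on the compact K\"ahler manifold $\CP^n$ produces $U \in W^k_{p,n-2}(\CP^n)$ with $\dbar U = F$, and one sets $u := U|_{\Omega^+}$. For $(3) \Rightarrow (2)$: apply a bounded Sobolev extension operator for Lipschitz domains to extend $u$ to some $\tilde u \in W^k_{p,n-2}(\CP^n)$; then $F := \dbar \tilde u$ automatically lies in $W^{k-1}_{p,n-1}(\CP^n)$, is $\dbar$-closed, and restricts to $f$. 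For $(2) \Rightarrow (1)$: by Lemma \ref{le:closed compatibility}, the pairing $\int_{b\Omega^+} f \wedge \phi$ is independent of the extension $\tilde f$ and reduces to $\int_\Omega \dbar \tilde f \wedge \phi$ in the limit; choosing $\tilde f := F$ from (2) makes $\dbar \tilde f = 0$, so the pairing vanishes.

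The main step $(1) \Rightarrow (2)$ mirrors the proof of Theorem \ref{th:L^2Extension}. Extend $f$ to some $\tilde f \in W^k_{p,n-1}(\CP^n)$ via a Stein-type extension operator. Then $\dbar \tilde f$ is supported in $\overline{\Omega}$ (since $\dbar f = 0$ on $\Omega^+$), lies in $L^2_{p,n}(\Omega,\delta^{-2k+2})$, and is trivially $\dbar$-closed. To invoke the top-degree case of Corollary \ref{co:L^2Cauchy}, I must verify the compatibility condition \eqref{eq:topdegree}: for any holomorphic $\phi \in L^2_{n-p,0}(\Omega,\delta^{2k-2})$, approximate $\phi$ by $\phi_\nu \in C^\infty_{n-p,0}(\overline{\Omega})$ with $\dbar\phi_\nu \to 0$ (as in Lemma \ref{le:closed compatibility}) and apply Stokes on $\Omega$ to $\tilde f \wedge \phi_\nu$; in the limit the boundary term is the pairing $\int_{b\Omega^+} f \wedge \phi$, which vanishes by (1), so that
\begin{equation*}
\int_\Omega \dbar \tilde f \wedge \phi \,=\, 0.
\end{equation*}
Corollary \ref{co:L^2Cauchy} then supplies $u_c \in L^2_{p,n-1}(\Omega,\delta^{-2k+2})$, extended by zero outside $\overline{\Omega}$, with $\dbar u_c = \dbar \tilde f$ in $\CP^n$; setting $F := \tilde f - u_c$ gives the desired $\dbar$-closed extension of $f$. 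The Sobolev regularity $u_c \in W^{k-1}_0(\Omega)$ follows from interior ellipticity of the canonical-solution system applied to $u_c$ combined with the weighted characterization of $W^{k-1}_0(\Omega)$ in Lemma \ref{le:Wk compact}.

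The principal obstacle is the compatibility check in the $(1) \Rightarrow (2)$ step: one must justify the Stokes-type identity linking the formal boundary pairing of condition (1) to the interior integral $\int_\Omega \dbar \tilde f \wedge \phi$ in the delicate regime of merely Lipschitz boundary, $W^k$ regularity of $f$, and weighted $L^2$ control on the test form $\phi$. This is exactly the trace-duality argument packaged in Lemma \ref{le:closed compatibility}, and it transfers here with no essential change. A secondary technical point is the elliptic upgrade from weighted $L^2$ to $W^{k-1}_0$ regularity of the Cauchy solution $u_c$, which is handled by Lemma \ref{le:Wk compact} exactly as in the proof of Theorem \ref{th:L^2Extension}.
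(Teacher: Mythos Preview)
Your proposal is correct and follows exactly the strategy the paper sets up: the implication $(1)\Rightarrow(2)$ via the top-degree case of the weighted $\dbar$-Cauchy problem (Corollary~\ref{co:L^2Cauchy}), with Lemma~\ref{le:closed compatibility} converting the boundary condition~\eqref{eq:compatibility} into the interior orthogonality~\eqref{eq:topdegree}, is the intended argument, and your $(2)\Leftrightarrow(3)$ via the global vanishing $H^{p,n-1}(\CP^n)=0$ for $p\neq n-1$ mirrors Theorem~\ref{th:W^kExistence}. One small clean-up: in your step $(2)\Rightarrow(1)$ you take $\tilde f:=F$, but $F$ is only in $W^{k-1}$ whereas Lemma~\ref{le:closed compatibility} is formulated for $W^k$ extensions; it is tidier to close the cycle with $(3)\Rightarrow(1)$ directly, via the Stokes computation $\int_{b\Omega^+} f\wedge\phi=\int_{b\Omega^+}\dbar u\wedge\phi=(-1)^{p+n-2}\int_{b\Omega^+} u\wedge\dbar\phi=0$, which is precisely the argument used in the proof of Corollary~\ref{co:closedrange}.
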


 \begin{corollary}\label{co:closedrange}   Let $\Omega^+$ be the same as in Theorem \ref{th:(n-1)-forms}. 
  Then  $\dbar : W^{k}_{p,n-2}({\Omega}^+)\to W^k_{p,n-1}(\Omega^+)$ has closed range,  where $k\ge 1$ and  $0\le p\le n$. 
 
\end{corollary}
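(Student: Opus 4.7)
The plan is to deduce closed range directly from the equivalence furnished by Theorem~\ref{th:(n-1)-forms}. For $p\neq n-1$, that theorem identifies the range of $\dbar\colon W^k_{p,n-2}(\Omega^+)\to W^k_{p,n-1}(\Omega^+)$ with
$$R_k=\Big\{f\in W^k_{p,n-1}(\Omega^+)\,:\, \dbar f=0 \text{ in }\Omega^+,\ \int_{b\Omega^+}f\wedge\phi=0 \ \forall\,\phi\in\mathcal H_k\Big\},$$
where $\mathcal H_k:=L^2_{n-p,0}(\Omega,\delta^{2k-2})\cap\text{Ker}(\dbar)$. It therefore suffices to show that $R_k$ is a closed subspace of $W^k_{p,n-1}(\Omega^+)$.

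The condition $\dbar f=0$ already cuts out a closed subspace, since $\dbar\colon W^k\to W^{k-1}$ is continuous. The main step is to verify that, for each fixed $\phi\in\mathcal H_k$, the compatibility functional $\Lambda_\phi(f):=\int_{b\Omega^+}f\wedge\phi$ is continuous on $\{f\in W^k_{p,n-1}(\Omega^+):\dbar f=0\}$. Given such an $f$, I would fix a bounded Stein extension $\tilde f\in W^k_{p,n-1}(\CP^n)$ with $\|\tilde f\|_{W^k(\CP^n)}\lesssim\|f\|_{W^k(\Omega^+)}$. Following the proof of Lemma~\ref{le:closed compatibility}, the pairing equals (up to sign) $\int_\Omega \dbar\tilde f\wedge\phi$. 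Since $f$ is $\dbar$-closed in $\Omega^+$, the distribution $\dbar\tilde f$ is supported in $\overline\Omega$ and lies in $W^{k-1}(\CP^n)$; its restriction to $\Omega$ therefore lies in $W^{k-1}_0(\Omega)$. By Lemma~\ref{le:Wk compact} and the Hardy-type estimate $\|\delta^{-(k-1)}g\|_{L^2(\Omega)}\lesssim\|g\|_{W^{k-1}_0(\Omega)}$, the pairing with $\phi\in L^2(\Omega,\delta^{2k-2})$ is bounded, yielding
$$|\Lambda_\phi(f)|\le C\,\|f\|_{W^k(\Omega^+)}\,\|\phi\|_{L^2(\Omega,\delta^{2k-2})}.$$

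With both defining conditions now realized as kernels of continuous linear maps on $W^k_{p,n-1}(\Omega^+)$, their intersection $R_k$ is closed, which gives the corollary. The main obstacle will be this continuity step for $\Lambda_\phi$: it is the bridge between the boundary pairing definition and interior weighted duality on a merely Lipschitz domain, and it is precisely the reason the weight exponent $2k-2$ matches the Sobolev index $k$ through Lemma~\ref{le:Wk compact}. The excluded case $p=n-1$, if the corollary is intended to cover it, would require a separate argument since Theorem~\ref{th:(n-1)-forms} does not apply there; one might appeal to an $L^2$ Serre-type duality on $\Omega^+$ to bypass the explicit compatibility characterization.
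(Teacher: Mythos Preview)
Your approach is correct and essentially the same as the paper's: both rely on Theorem~\ref{th:(n-1)-forms} to characterize the range by the compatibility condition~\eqref{eq:compatibility}, together with the continuity of the boundary pairing established in Lemma~\ref{le:closed compatibility}. The paper packages the argument slightly differently: it takes $f$ in the closure of the range, picks $u_\nu\in W^k_{p,n-2}(\Omega^+)$ with $\dbar u_\nu\to f$, and then verifies \eqref{eq:compatibility} directly via a Stokes-type identity on $b\Omega^+$,
\[
\int_{b\Omega^+} f\wedge\phi=\lim_{\nu}\int_{b\Omega^+}\dbar u_\nu\wedge\phi=\pm\lim_{\nu}\int_{b\Omega^+} u_\nu\wedge\dbar\phi=0,
\]
which avoids re-deriving the explicit continuity bound for $\Lambda_\phi$ that you work out. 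Your observation about the case $p=n-1$ is well taken: the paper's proof also invokes Theorem~\ref{th:(n-1)-forms}, so it inherits the same restriction.
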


\begin{proof}
 Let $f$ be a $\dbar$-closed $(p,n-1)$-form in $ W^k_{p, n-1}(\Omega^+)$. Suppose that $f$ is in the closure of the range of 
 $\dbar : W^{k}_{p,n-2}({\Omega}^+)\to W^k_{p,n-1}(\Omega^+)$. There exists a sequence $u_\nu \in W^k_{p,n-2}(\Omega^+)$ such 
 that $\dbar u_\nu\to f$ in $W^k_{p,n-1}(\Omega^+)$. It suffices to show  that there exists $u\in  W^k_{p,n-2}(\Omega^+)$ such that $\dbar u=f$. 
 
 From Theorem \ref{th:(n-1)-forms}, it suffices to show that the condition \eqref{eq:compatibility} is satisfied for every $\phi\in L^2_{n-p,0} (\Omega,\delta^{2k-2})
\cap\text{Ker}(\dbar)$. 
This follows from 
\begin{equation}\label{eq:IntegrationW1}
\int_{b\Omega^+}    f\wedge \phi     =\lim_{\nu\to\infty}\int_{b\Omega^+}  \dbar u_\nu\wedge \phi  = \lim_{\nu\to\infty} (-1)^{p+n-2}\int_{b\Omega^+} u_\nu
\wedge\dbar \phi=0.\end{equation}
Thus $f=\dbar u$ for some $u\in W^k_{p,n-2}(\Omega^+)$. Thus the range of $\dbar$ is closed in $W^k_{p,n-1}(\Omega^+)$.

\end{proof}

 Combining the results, we have proved the following theorem. 
 
\begin{theorem}\label{th:W^1cohomology}   Let $\Omega^+$ be the same as in Theorem \ref{th:(n-1)-forms}. Then for any $k\in \N$, 
\begin{itemize}
\item  $H^{p,q}_{W^k}(\Omega^+)=0$,  if $0\le q< n-1$  and  $p\neq q$;
\item 
$H^{p,n-1}_{W^k}(\Omega^+)$ is Hausdorff and infinite dimensional,  if  $p\neq n-1$.
\end{itemize}
\end{theorem}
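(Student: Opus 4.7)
The theorem is billed as a synthesis, so my plan is to assemble the three pieces already established rather than prove anything from scratch.

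For the first bullet, I would split into two ranges of $q$. When $q=0$ and $p\neq 0$, a $\dbar$-closed element of $W^k_{p,0}(\Omega^+)$ lies in $W^1_{p,0}(\Omega^+)\cap\text{Ker}(\dbar)$ (since $W^k\hookrightarrow W^1$ for $k\ge 1$), so it vanishes by Corollary \ref{co:W^1holo}; hence exactness is automatic and $H^{p,0}_{W^k}(\Omega^+)=0$. When $1\le q<n-1$ and $p\neq q$, the vanishing is precisely the conclusion of Theorem \ref{th:W^kExistence}, which already produces a $W^k_{p,q-1}(\Omega^+)$ primitive for every $\dbar$-closed $f\in W^k_{p,q}(\Omega^+)$. (One should note that Theorem \ref{th:W^kExistence} assumed $n\ge 3$; for $n=2$ the range $1\le q<n-1$ is empty, so no statement is needed there.)

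For the second bullet, the Hausdorff assertion is immediate from Corollary \ref{co:closedrange}: closed range of $\dbar\colon W^k_{p,n-2}(\Omega^+)\to W^k_{p,n-1}(\Omega^+)$ means the space of exact forms is closed in the space of $\dbar$-closed forms, so the quotient $H^{p,n-1}_{W^k}(\Omega^+)$ inherits a Hausdorff topology. For infinite-dimensionality, the plan is to use the equivalence (1)$\Leftrightarrow$(3) in Theorem \ref{th:(n-1)-forms} to set up an injection
\[
H^{p,n-1}_{W^k}(\Omega^+) \hookrightarrow \bigl(L^2_{n-p,0}(\Omega,\delta^{2k-2})\cap \text{Ker}(\dbar)\bigr)^{*},
\]
sending $[f]$ to the functional $\phi\mapsto \int_{b\Omega^+} f\wedge\phi$ from Lemma \ref{le:closed compatibility}; injectivity is exactly the statement that the compatibility condition \eqref{eq:compatibility} characterizes exactness. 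Since the paired space $L^2_{n-p,0}(\Omega,\delta^{2k-2})\cap\text{Ker}(\dbar)$ contains the unweighted $L^2$ holomorphic $(n-p,0)$-forms on the pseudoconvex side $\Omega$ (because $\delta\le \text{const}$, so $\delta^{2k-2}$ is bounded above), which is infinite-dimensional by Proposition \ref{prop:L^2holo} (for $p=0$, the $L^2$ holomorphic $(n,0)$-forms separate points, and for $0<p\neq n-1$ one argues similarly using products with local holomorphic frames), the dual is infinite-dimensional; one then exhibits, for each linearly independent collection $\phi_1,\ldots,\phi_N$ on the $\Omega$-side, $\dbar$-closed $(p,n-1)$-forms $f_1,\ldots,f_N$ on $\Omega^+$ pairing to a non-singular matrix, for instance via Bochner--Martinelli-type kernels with poles at distinct interior points of $\Omega$.

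The routine steps are really just citations. The genuine obstacle is the infinite-dimensionality, and within it the only delicate point is producing enough linearly independent $\dbar$-closed forms on $\Omega^+$ to realize the functionals dual to a given infinite linearly independent set of holomorphic forms on $\Omega$; a clean way is to invoke an $L^2$-Serre duality for the Lipschitz pair $(\Omega,\Omega^+)$ (as developed in \cite{ChakrabartiShaw12, LaurentShaw13}) to upgrade the injection above to an isomorphism, after which infinite-dimensionality of the cohomology is transferred directly from infinite-dimensionality of the space of holomorphic $(n-p,0)$-forms on $\Omega$.
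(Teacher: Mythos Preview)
Your proposal is correct and matches the paper's approach, which simply states that the theorem follows by ``combining the results'' (namely Corollary~\ref{co:W^1holo}, Theorem~\ref{th:W^kExistence}, Theorem~\ref{th:(n-1)-forms}, and Corollary~\ref{co:closedrange}) without further elaboration. Your decomposition of the first bullet into the $q=0$ case and the $1\le q<n-1$ case, and of the second bullet into the Hausdorff part (via closed range) and the infinite-dimensionality part (via the pairing in Theorem~\ref{th:(n-1)-forms}), is exactly the intended synthesis.

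You are in fact more careful than the paper on one point: you correctly observe that the injection $H^{p,n-1}_{W^k}(\Omega^+)\hookrightarrow\bigl(L^2_{n-p,0}(\Omega,\delta^{2k-2})\cap\text{Ker}(\dbar)\bigr)^{*}$ alone does not yield infinite-dimensionality, and that one needs nondegeneracy from the other side---either by producing explicit $\dbar$-closed forms on $\Omega^+$ realizing prescribed functionals, or by invoking the $L^2$-Serre duality machinery of \cite{ChakrabartiShaw12,LaurentShaw13} to upgrade the injection to an isomorphism. The paper leaves this implicit (it is the content of the references \cite{Shaw10,FLS17} cited in the subsequent remark), so your identification of this as the only genuinely nontrivial step is apt.
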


\begin{remark}  It is still an open question if Theorems \ref{th:W^kExistence} and  \ref{th:(n-1)-forms} hold for $k=0$ (see Problem \ref{prob:L2 pseudoconcave}).  The missing ingredient is the lack of $W^1$-estimates with pseudoconvex domains in $\CP^n$.

 When the domain $\Omega$ is a  bounded domain with smooth boundary   in $\C^n$, there  has been a lot of  results obtained earlier.  
The space of $L^2$  harmonic forms  for the critical degree $q=n-1$  on an annulus between two concentric balls or  strongly pseudoconvex domains in $\C^n$ has been  computed in \cite{Hormander04}. This has been generalized to annulus between two pseudoconvex domains
in $\C^n$ in \cite{Shaw10, Shaw11}. We also remark that the conditions on the cohomology groups can be used to characterize domains with holes with Lipschtiz boundary in $\C^n$ (see     \cite{FLS17}). All these results depend on the Sobolev estimates for $\dbar$ proved by Kohn (see Theorem \ref{th:Kohn main}). 

 \end{remark}


  \section{Properties of holomorphic functions and forms on the Hartogs triangles} \label{sec:Hartogs 21}

We denote the homogeneous coordinates in $\CP^2$  by
$[Z_0,Z_1,Z_2]$. 
Let  $\hb^+$ and $\hb^-$  be the Hartogs triangles  defined by
\begin{align*}
\hb^+&=\{[Z_0:Z_1:Z_2]\in\cx\mathbb P^2~|~|Z_1|<|Z_2|\}\\
\hb^-&=\{[Z_0:Z_1:Z_2]\in\cx\mathbb P^2~|~|Z_1|>|Z_2|\}
\end{align*}
then $\hb^+\cap\hb^-=\emptyset$ and $\ol\hb^+\cup\ol\hb^-=\cx\mathbb P^2$.

 Let $U_j=\{[Z_0,Z_1,Z_2]\mid Z_j\neq 0\},$ $j=0,1,2.$ Then $\hb^+\subset U_2$. In local coordinates,   $$\hb^+= \{(z,w)\in \C^2\mid |w|<1\}.$$
 Thus $\hb^+$  is the  product  $\C\times D$, where $D$ is the unit disk. 
Hence $\hb^+$ is  pseudoconvex.

 In this section, we first recall some known results on the Hartogs triangles. 
 The Hartogs triangles are not Lipschitz. However,  
 some function properties  for the Hartogs triangles still hold.  Recall that   a domain $\Omega\subset \CP^n$ is called a Sobolev extension domain if for any 
$f\in W^s(\Omega)$, there exists $\tilde f \in W^s(\CP^n)$ such that $\tilde f=f$ on $\Omega$. 
 \begin{lemma}\label{le:extension hartogs cp2} The Hartogs triangles $\hb^+$ and $\hb^-$  are Sobolev extension domains.
 
 \end{lemma}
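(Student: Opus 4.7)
The proof proceeds by a partition-of-unity argument, reducing the extension problem to two local models. The boundary $b\hb^+=\{|Z_1|=|Z_2|\}$ is a smooth real hypersurface of $\CP^2$ everywhere except at the single point $p_0:=[1:0:0]$, where the simultaneous vanishing $Z_1=Z_2=0$ produces a cusp-type singularity. Accordingly, I would cover $\overline{\hb^+}$ by two open sets: $W_1\subset U_2$, a neighborhood of $b\hb^+\setminus\{p_0\}$ in which $\hb^+$ coincides with the smooth product domain $\{(z_0,z_1)\in\C^2:|z_1|<1\}$; and $W_2\subset U_0$, a small neighborhood of $p_0$ in which, in the affine coordinates $(w_1,w_2)=(Z_1/Z_0,Z_2/Z_0)$, the set $\hb^+$ appears as the classical Hartogs triangle $\{|w_1|<|w_2|\}$ near the origin. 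Fix a smooth partition of unity $\{\chi_1,\chi_2\}$ subordinate to this cover and decompose $f=\chi_1 f+\chi_2 f$.

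For the \textbf{smooth piece} $\chi_1 f$, whose support is compact in $W_1$ and hence bounded away from $p_0$, the relevant portion of the boundary is the smooth hypersurface $\{|z_1|=1\}\subset U_2$. I would apply Stein's universal extension operator (or, equivalently, reflection across $\{|z_1|=1\}$ followed by a smooth cutoff) to produce a $W^s$ function on $U_2$, and then extend it by zero to all of $\CP^2$, which is legitimate because the support is compact in $U_2\setminus\{p_0\}$.

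For the \textbf{singular piece} $\chi_2 f$, I would exploit the proper holomorphic map $\Phi\colon\D\times\D\to\C^2$ defined by $\Phi(\zeta,\eta)=(\zeta\eta,\eta)$, which is a coordinate chart for the blow-up of $\C^2$ at the origin. $\Phi$ is biholomorphic from $\D\times(\D\setminus\{0\})$ onto the local Hartogs cone $\{|w_1|<|w_2|<1\}$, and it collapses $\D\times\{0\}$ to the cusp point $(0,0)$ corresponding to $p_0$. Pull $\chi_2 f$ back via $\Phi$ to obtain a $W^s$ function on $\D\times(\D\setminus\{0\})$; extend across the smooth boundary $\{|\zeta|=1\}$ and across the codimension-two exceptional set $\{\eta=0\}$ by standard smooth-domain techniques together with the removal of codimension-two singularities; then push the result down to a $W^s$ extension on a full neighborhood of $p_0$ in $\CP^2$, matched to the $\{|w_1|>|w_2|\}$ side by a cutoff combined with a reflection $w_1\leftrightarrow w_2$. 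Combining the two local extensions via $\chi_1$ and $\chi_2$ yields the required $\tilde f\in W^s(\CP^2)$; the case $\hb^-$ is handled analogously after exchanging $Z_1$ and $Z_2$.

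The \textbf{main obstacle} is $W^s$-control of the extension near $p_0$: both $\Phi$ and its inverse branch degenerate along the exceptional set $\{\eta=0\}$, so the change of variables and subsequent push-down must be shown to preserve $W^s$-integrability with respect to the Fubini-Study metric on $\CP^2$. An alternative route is to appeal to a Jones-type extension theorem for $(\varepsilon,\delta)$- or John-type domains, provided the Hartogs triangle (or a suitable desingularization via $\Phi$) meets the required geometric hypothesis; either way, uniform Sobolev control across the cusp is the technical heart of the proof.
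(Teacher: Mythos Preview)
Your localization strategy matches the paper's exactly: the boundary is smooth away from $p_0=[1:0:0]$, so Stein's operator handles that part, and the only issue is a neighborhood of $p_0$ where, in the chart $U_0$, the domain coincides with the classical Hartogs triangle $T=\{|z|<|w|<1\}\subset\C^2$. At this point, however, the paper does not attempt any construction: it simply cites \cite{BFLS22}, where $T$ is shown to be a Sobolev extension domain, and declares that the same proof carries over. So the paper's argument for the singular piece is a one-line citation, not a self-contained proof.

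Your proposal instead tries to establish the local extension at $p_0$ directly via the blow-up map $\Phi(\zeta,\eta)=(\zeta\eta,\eta)$, and the gap you yourself flag is genuine. The inverse $\Phi^{-1}(w_1,w_2)=(w_1/w_2,w_2)$ has unbounded derivatives as $w_2\to 0$, and the real Jacobian of $\Phi$ is $|\eta|^2$, which vanishes on the exceptional set. Consequently pulling a $W^s$ function on the Hartogs cone back through $\Phi$ lands only in a \emph{weighted} Sobolev space on $\D\times\D^*$, and pushing a bidisc extension forward does not automatically yield a $W^s$ function near the origin of $\C^2$. The ``removal of codimension-two singularities'' you invoke is a principle for holomorphic or harmonic functions, not for general $W^s$ functions, so that step does not go through as written. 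Your alternative suggestion---appealing to a Jones-type $(\varepsilon,\delta)$ extension theorem---is closer to what is actually needed, but then the substance of the argument is verifying the relevant geometric hypothesis for $T$, which is precisely the content of the result in \cite{BFLS22} that the paper invokes.
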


 \begin{proof}
 The Hartogs triangle $\hb^+$ is  smooth except at the point $[1,0,0]$.  If  we set
$z= {z_1}/{z_0}$ and $w= {z_2}/{z_0}$, then the domain $\hb^+$  is defined by the inhomolgeneous coordinates $(z,w)$ by 
$$\hb^+=\{z,w)\in \C^2\mid |z|<|w|\}.$$
The Hartogs triangle $\hb^+$ and $\hb^-$ are not Lipschitz at $(0,0)$.   At (0,0), the singularity of $\hb^+$ and $\hb^-$ are the same as the Hartogs triangle  
$$T=\{(z,w)\in  \C^2\mid |z|<|w|<1\}.$$ 
It is proved  in  \cite{BFLS22} that $T$ is an extension domain. Thus the lemma follows from the same proof. 
\end{proof}

It is also proved in Theorem 3.13 in   \cite{BFLS22} that the weak and strong extensions of $\dbar$ are the same.    
Define the $L^2$ Dolbeault cohomolgy group with respect to  $\dbar_c$  as follows:
 $$H^{p,q}_{\dbar_c, L^2}(\hb^-)=\frac {\text{Ker}(\dbar_c)}{\text{Range}(\dbar_c)}.$$

The following  lemma is proved in Proposition 6 in  \cite{ChakrabartiShaw12}.
  \begin{lemma}\label{le:holoL2}
Let $\hb^+\subset\cx\mathbb P^2$ be the Hartogs' triangle.
Then we have the following:
\begin{enumerate}
\item The Bergman space of $L^2$ holomorphic functions   $L^2(\hb^+)\cap\mathcal{O}(\hb^+)$
on the domain $\hb^+$ separates points in $\hb^+$.

\item There exist non-constant  functions in the space $W^1(\hb^+)\cap\mathcal{O}(\hb^+)$.
However, this space does not separate points in $\hb^+$ and  is not dense in the Bergman space $L^2(\hb^+)\cap \mathcal{O}(\hb^+)$.
\item  Let $f\in W^2(\hb^+)\cap \mathcal{O}(\hb^+)$ be a holomorphic function on $\hb^+$ which is in the
Sobolev space $ W^2(\hb^+)$. Then $f$ is a constant.
  \end{enumerate}
\end{lemma}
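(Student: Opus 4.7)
The plan is to work in explicit coordinates. Since $|Z_1|<|Z_2|$ forces $Z_2\neq 0$, the domain $\hb^+$ is entirely contained in the affine chart $U_2$, and in the coordinates $w=Z_0/Z_2$, $\tau=Z_1/Z_2$ it is simply the product $\hb^+=\C_w\times D_\tau$. Consequently every $f\in\mathcal{O}(\hb^+)$ has a globally convergent power series $f(w,\tau)=\sum_{j,k\ge 0}c_{j,k}\,w^j\tau^k$ on $\C\times D$. The only boundary singularity of $\hb^+$ is the point $[1,0,0]$, which corresponds to $w\to\infty$ in the $U_2$-chart and to the origin in the $U_0$-chart with coordinates $(\zeta_1,\zeta_2)=(Z_1/Z_0,Z_2/Z_0)$, in which $\hb^+=\{|\zeta_1|<|\zeta_2|\}$ and the chart transition sends $w^j\tau^k$ to $\zeta_1^k\zeta_2^{-j-k}$. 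Since the Fubini--Study metric is smooth on $\CP^2$ and comparable to the Euclidean metric near $[1,0,0]$, all $L^2$, $W^1$ and $W^2$ questions reduce to growth/integrability conditions near that point together with the decay $(1+|w|^2)^{-3}$ at infinity in $w$.

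The next step is to exploit the isometric action of the torus $T^2\subset U(3)$ on $\hb^+$ via $(w,\tau)\mapsto(e^{i\alpha}w,e^{i\beta}\tau)$, which preserves $\hb^+$, the Fubini--Study metric, and the singular point. Averaging against Fourier modes shows that the monomials $\{w^j\tau^k\}$ are mutually orthogonal in $L^2(\hb^+)$, and more generally that $f$ lies in $W^k(\hb^+)$ if and only if every one of its monomial components does. Direct polar-coordinate computations then yield sharp thresholds: $w^j\tau^k\in L^2(\hb^+)$ if and only if $j\le 1$ (the constraint coming equally from the decay $(1+|w|^2)^{-3}$ at infinity in $w$ and from the radial integral $\int_0^\epsilon r^{3-2j}\,dr$ near the singular point in $U_0$); $w^j\tau^k\in W^1(\hb^+)$ if and only if $j=0$, because $\partial_{\zeta_2}(\zeta_1^k\zeta_2^{-j-k})$ supplies an extra factor $\zeta_2^{-1}$ changing the radial integrand to $r^{1-2j}$; and $w^j\tau^k\in W^2(\hb^+)$ if and only if $j=k=0$, since for $k\ge 1$ the second $\zeta_2$-derivative of $(\zeta_1/\zeta_2)^k$ produces a logarithmically divergent integral $\int_0^\epsilon r^{-1}\,dr$.

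The three parts of the lemma now follow. For (1), the functions $1$, $\tau$ and $w$ all lie in the Bergman space by the first threshold, and they manifestly separate any two points of $\hb^+$. For (3), any non-constant $f\in\mathcal{O}(\hb^+)$ contains some $c_{j,k}\neq 0$ with $(j,k)\neq(0,0)$, whose monomial component violates the $W^2$ threshold. For (2), the $W^1$ threshold shows that $W^1(\hb^+)\cap\mathcal{O}(\hb^+)$ consists exactly of the holomorphic functions $a(\tau)$ depending on $\tau$ alone; this contains $\tau$, hence non-constants, but obviously cannot distinguish two points sharing the same $\tau$-coordinate. Moreover, rotation in $\arg w$ produces an $L^2$-orthogonal decomposition of the Bergman space into the $w^0$- and $w^1$-sectors, so $w$ sits at positive $L^2$-distance from the $w^0$-sector that contains $W^1\cap\mathcal{O}(\hb^+)$, yielding non-density.

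The most delicate step is identifying the Sobolev behavior of $f$ at $[1,0,0]$, since the power-series expansion of $f$ is natural in the $U_2$-chart (where $[1,0,0]$ sits at infinity) while Sobolev integrability must be checked in the $U_0$-chart, and the chart transition $w^j\tau^k=\zeta_1^k\zeta_2^{-j-k}$ mixes the two variables. This is handled cleanly by the $T^2$-action, which extends to a global isometry of $\CP^2$ fixing $[1,0,0]$, making the projection onto Fourier modes bounded on every $W^k(\hb^+)$ and the monomial decomposition globally meaningful.
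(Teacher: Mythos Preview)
The paper does not prove this lemma; it simply cites Proposition~6 of \cite{ChakrabartiShaw12}. Your proposal therefore cannot be compared to any argument in the present paper, but it is a correct and self-contained proof of the result, carried out by exactly the monomial analysis one finds in that reference.

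A couple of minor points. First, your description of $W^1(\hb^+)\cap\mathcal{O}(\hb^+)$ as ``exactly the holomorphic functions $a(\tau)$ depending on $\tau$ alone'' is slightly loose: not every holomorphic $a$ on the disk gives an element of $W^1(\hb^+)$ (one needs $a$ in an appropriate weighted Sobolev/Bergman class on $D$). This does not affect the lemma, since for part~(2) you only use the forward implication (any $f\in W^1\cap\mathcal{O}$ has vanishing $w^j$-modes for $j\ge 1$) together with the single example $\tau\in W^1\cap\mathcal{O}$, both of which you establish. Second, the step ``$f\in W^k(\hb^+)$ iff every monomial component lies in $W^k$'' is stronger than what you actually need or prove; the torus averaging gives only the forward direction, and that is all the argument requires. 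With these caveats your thresholds $j\le 1$ for $L^2$, $j=0$ for $W^1$, and $j=k=0$ for $W^2$ are computed correctly, and parts~(1)--(3) follow as you indicate.
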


  \begin{lemma}\label{le:holo 20} The following results hold:
  \begin{enumerate}
   \item  $H^{2,0}_{L^2}(\hb^+)=0;$
  \item  $  H^{0,1}_{L^2}(\hb^+)=0;$  
  \item  $H^{2,1}_{L^2}(\hb^\pm)$  is infinite dimensional.

  \end{enumerate}
   \end{lemma}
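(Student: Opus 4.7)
The plan is to work in the affine chart $U_2=\{Z_2\neq 0\}$ with local coordinates $(u,v)=(Z_0/Z_2,Z_1/Z_2)$, in which $\hb^+$ is precisely the product $\C_u\times D_v$ with $D$ the unit disk. Direct computation with the Fubini-Study metric gives $|du\wedge dv|_\omega^2\,dV_\omega=dV_E$ (Euclidean volume), since $\det(g_{\alpha\bar\beta})$ and $\det(g^{\alpha\bar\beta})$ are reciprocals, and similarly $|du\wedge dv\wedge d\bar u|_\omega^2\,dV_\omega=(1+|u|^2+|v|^2)(1+|u|^2)\,dV_E$. For item (1), a $\dbar$-closed $L^2_\omega$ form $h\,du\wedge dv$ has Euclidean $L^2$ coefficient $h$ holomorphic on $\C\times D$; for a.e.\ $v\in D$ the slice $h(\cdot,v)$ is entire and square-integrable on $\C$, hence vanishes by the mean-value inequality, so $h\equiv 0$. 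For item (2), $\hb^+$ is pseudoconvex and $\overline{\hb^+}\neq\CP^2$ (its complement contains $\hb^-$), so Corollary~\ref{co:L2 vanishing} applies.

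For item (3), the plan is to embed the infinite-dimensional Bergman space $L^2(D)\cap\mathcal{O}(D)$ into $H^{2,1}_{L^2}(\hb^+)$. Given $g\in L^2(D)\cap\mathcal{O}(D)$, set
\[
\phi_g(u,v)=\frac{\bar u\,g(v)}{1+|u|^2},\qquad
\alpha_g=\dbar\bigl(\phi_g\,du\wedge dv\bigr)=\frac{g(v)}{(1+|u|^2)^2}\,du\wedge dv\wedge d\bar u,
\]
using $\partial_{\bar u}[\bar u/(1+|u|^2)]=1/(1+|u|^2)^2$ and $\partial_{\bar v}g=0$. The form $\alpha_g$ is smooth and $\dbar$-closed by construction. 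Using $\int_\C(1+|u|^2)^{-k}\,dA_u=\pi/(k-1)$ for $k>1$, the norm formula above gives $\|\alpha_g\|_\omega^2\le \tfrac{3\pi}{2}\|g\|_{L^2(D)}^2<\infty$, so $\alpha_g\in L^2_{2,1}(\hb^+)$.

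For injectivity of $g\mapsto[\alpha_g]$, suppose $\alpha_g=\dbar(h\,du\wedge dv)$ with $h$ in Euclidean $L^2$ on $\C\times D$. Matching coefficients gives $\partial_{\bar v}h=0$ and $\partial_{\bar u}h=g(v)/(1+|u|^2)^2$, so the general solution for each fixed $v$ is $h(u,v)=g(v)K(u)+H(u,v)$ with $K$ a fixed antiderivative and $H(\cdot,v)$ entire on $\C$. The Cauchy-Pompeiu formula applied to the $L^1$ function $1/(1+|u|^2)^2$ (of total integral $\pi$) yields the asymptotic $K(u)=1/u+O(1/u^2)$ at infinity; since the Taylor series of an entire function has only non-negative powers of $u$, the $1/u$-term of $h(\cdot,v)$ cannot be absorbed into $H$, and $\int_\C|u|^{-2}\,dA_u=\infty$ forces $g(v)=0$ on a set of full measure in $D$. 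By holomorphicity, $g\equiv 0$. This establishes the embedding and hence $\dim H^{2,1}_{L^2}(\hb^+)=\infty$; the case of $\hb^-$ is identical in the chart $U_1$.

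The principal obstacle is controlling the asymptotics of the minimal $\dbar$-primitive on $\C$: Cauchy-Pompeiu gives $K(u)\sim 1/u$, which is exactly the critical decay rate that lies outside $L^2(\C)$ and cannot be cancelled by any entire function. Although $\hb^+\cong\C\times D$ is Stein and all classical Dolbeault cohomology vanishes, the Fubini-Study $L^2$ condition combined with the non-Lipschitz singularity of $\hb^+$ at $[1:0:0]$ forces this infinite-dimensional cohomological obstruction.
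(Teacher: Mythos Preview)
Your arguments for (1) and (2) are essentially the paper's own: the metric-independence of the $(2,0)$-norm followed by an $L^2$-Liouville slicing, and the direct appeal to Corollary~\ref{co:L2 vanishing}.

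For (3) you take a genuinely different route. The paper does not construct any cohomology classes; it invokes Lemma~\ref{le:holoL2} (that $W^1$ holomorphic functions on $\hb^+$ fail to separate points) together with the duality machinery of \cite{LaurentShaw18,BFLS22}, so the infinite-dimensionality is deduced indirectly from a function-theoretic obstruction. Your argument is instead a direct, self-contained embedding of the Bergman space $L^2(D)\cap\mathcal O(D)$ into $H^{2,1}_{L^2}(\hb^+)$ by explicit test forms $\alpha_g$. This has the advantage of actually exhibiting the classes and requiring no outside references; the paper's approach has the advantage of tying (3) to the Sobolev separation phenomenon that drives the main theorem.

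Two small points on the injectivity step. First, once $\dbar(h-\phi_g)=0$ distributionally with $h\in L^2$ and $\phi_g$ smooth, elliptic regularity gives that $H:=h-\phi_g$ is genuinely holomorphic on $\C\times D$, so the slicewise picture is legitimate. Second, the phrase ``the $1/u$-term of $h(\cdot,v)$'' is informal since $h$ is not meromorphic; the clean statement is via the Fourier expansion in $\theta=\arg u$: writing $h(re^{i\theta},v)=\sum_n h_n(r,v)e^{in\theta}$, your primitive $\bar u/(1+|u|^2)$ contributes only to the mode $n=-1$ while any entire $H(\cdot,v)$ contributes only to modes $n\ge 0$, so $h_{-1}(r,v)=g(v)\,r/(1+r^2)$ exactly, and
\[
\int_\C|h(\cdot,v)|^2\,dA\ \ge\ 2\pi|g(v)|^2\int_0^\infty\frac{r^3}{(1+r^2)^2}\,dr=\infty
\]
whenever $g(v)\neq 0$. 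This replaces the heuristic ``$1/u$ cannot be absorbed'' and makes the step airtight.
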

   
  \begin{proof}

   Let  $z=z_0/z_2$ and $w=z_1/z_2$. Then   $\hb^-$ is biholomorphic to $\C\times D$. 
   Let $\phi= f dz \wedge dw,$ where $f$ is holomorphic in $\C\times D$. 
   Since $\phi$ is a $(2,0)$-form, its $L^2$-norm is metric independent. We can just use the Euclidean metric on $\C\times D$.
   If $f\in L^2(\C\times D)$, then $f$ is $L^2$ on the  leaf  $(\cdot, w)$ a.e., where $w\in D$.  Since $f$ is holomorphic, this implies that $f=0$ on $\C\times D$.  Thus $\phi=0$  on $\hb^+$.  This proves (1).

    (2)  is already proved in     Corollary \ref{co:L2 vanishing}.  The proof of (3) uses  Lemma \ref{le:holoL2}. It    follows from \cite{LaurentShaw18} combining with  the results    in \cite{BFLS22} (see also  \cite{Shaw24}).
   \end{proof}

\section{Non-closed range property  for $\dbar$ on Hartogs triangles  $\CP^2$}\label{sec:nonclosed proof}

We now state and prove the main result in this paper.

\begin{theorem}\label{th:Nonclosed 21}    $\dbar  :L^2_{2,0}(\hb^+)\to L^2_{2,1}(\hb^+)$ does not have closed range. 

        \end{theorem}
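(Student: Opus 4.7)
The plan is a proof by contradiction combining $L^2$ Serre duality (Lemma \ref{le:dbar star c}) with an explicit construction at the singular boundary point $[1:0:0]$ of $\hb^+$. Suppose that $\dbar\colon L^2_{2,0}(\hb^+)\to L^2_{2,1}(\hb^+)$ has closed range. Applying Lemma \ref{le:dbar star c} with $p=2$, $q=1$, $n=2$, $t=0$, this is equivalent to the closed range of the minimal operator $\dbar_c\colon L^2_{0,1}(\hb^+)\to L^2_{0,2}(\hb^+)$, and by the open mapping theorem yields a constant $C>0$ such that every $f\in\mathrm{Range}(\dbar)\subset L^2_{2,1}(\hb^+)$ admits an $L^2$-primitive $h\in L^2_{2,0}(\hb^+)$ with $\dbar h=f$ and $\|h\|\le C\|f\|$. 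The task is then to exhibit a $\dbar$-closed $(2,1)$-form $f\in\overline{\mathrm{Range}(\dbar)}\setminus\mathrm{Range}(\dbar)$, contradicting the closed range assumption.

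The construction exploits the dual roles of the two charts. In $U_0=\{Z_0\neq 0\}$, with coordinates $(z_1,z_2)=(Z_1/Z_0,Z_2/Z_0)$, one has $\hb^+\cap U_0=\{|z_1|<|z_2|\}$ and the singular boundary point $[1:0:0]$ sits at the origin, so $\hb^+$ is locally modeled on the classical Hartogs triangle. In $U_2=\{Z_2\neq 0\}$ with coordinates $(z,w)$, one has $\hb^+=\C_z\times D_w$, which constrains the global extension of holomorphic $(2,0)$-forms. Using a cutoff $\chi$ equal to $1$ near $[1:0:0]$ and supported in a small neighborhood contained in $U_0$, and following the Laurent/Fourier constructions of \cite{LaurentShaw18,BFLS22}, one produces $f$ as the $\dbar$ of a smooth but non-$L^2$ $(2,0)$-form $h=\chi\cdot\Phi\,dz_1\wedge dz_2$ whose coefficient $\Phi$ is a holomorphic function with specific Laurent behavior in $z_2$ near $[1:0:0]$. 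The form $f=\bar\partial\chi\wedge\Phi\,dz_1\wedge dz_2$ lies in $L^2_{2,1}(\hb^+)$ because $\bar\partial\chi$ is supported away from the singular point, and smooth truncations of $h$ near the singularity give $L^2_{2,0}$-primitives whose images under $\dbar$ converge to $f$ in $L^2$, placing $f$ in $\overline{\mathrm{Range}(\dbar)}$.

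The crux is to show $f\notin\mathrm{Range}(\dbar)$. A candidate $L^2$-primitive $\tilde h$ would make $g=h-\tilde h$ a holomorphic $(2,0)$-form on $\hb^+$ whose Laurent behavior near $[1:0:0]$ matches that of $\Phi$. The Bergman/Laurent classification of $L^2$ holomorphic $(2,0)$-forms on $\hb^+$ under the Fubini-Study metric (Lemma \ref{le:holoL2}, Proposition \ref{prop:L^2holo}, and the analysis in \cite{ChakrabartiShaw12,LaurentShaw18,BFLS22}) rules out such $g$: the local singular behavior in $U_0$, combined with the global extension condition across $\{Z_0=0\}$ (where in the $U_2$ chart holomorphic $(2,0)$-forms correspond to entire functions in $z\in\C$ on each slice $\{w\}\times D$, and $L^2$ growth in $z$ is severely restricted), cannot simultaneously be met by a non-zero holomorphic $L^2$ form. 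Hence no $\tilde h$ exists, giving the contradiction.

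The main obstacle is the detailed Bergman classification in the last step, including the Fubini-Study weight bookkeeping that couples the local Hartogs-cusp analysis in $U_0$ with the global entire-function constraint in $U_2$. This is what upgrades the previous infinite-dimensionality statements of \cite{LaurentShaw18,BFLS22} to the topological assertion that the range is not closed; the sharper statement that $H^{2,1}_{L^2}(\hb^+)$ is uncountable-dimensional follows by parametrizing the construction over a continuous family of Laurent exponents.
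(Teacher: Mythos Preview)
Your plan takes a direct constructive route that the paper does not follow, and the step you yourself flag as the ``main obstacle'' is a genuine gap rather than deferred bookkeeping. The paper argues indirectly: Lemma~\ref{le:L2 W1 H} shows, via Serre duality and a $\dbar$-Cauchy argument using the Sobolev-extension property of $\hb^\pm$, that closed range of $\dbar\colon L^2_{2,0}(\hb^+)\to L^2_{2,1}(\hb^+)$ is equivalent to $H^{0,1}_{W^1}(\hb^-)=0$, hence by the biholomorphism $\hb^+\cong\hb^-$ to $H^{0,1}_{W^1}(\hb^+)=0$. Lemma~\ref{le:W1 separate} then runs an Oka-type construction (solve $\dbar$ in $W^1$ for a cutoff along a degree-two curve) to produce $W^1$ holomorphic functions separating any two points of $\hb^+$, contradicting Lemma~\ref{le:holoL2}(2). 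No explicit element of $\overline{\mathrm{Range}(\dbar)}\setminus\mathrm{Range}(\dbar)$ is ever exhibited.

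Your obstruction argument is not correct as stated. You claim an $L^2$ primitive $\tilde h$ would make $g=h-\tilde h$ a global holomorphic $(2,0)$-form ``ruled out by the Bergman classification'', but $g$ need not be in $L^2$ (only $\tilde h$ is), so Lemma~\ref{le:holo 20}(1) does not apply. Indeed, for $\Phi=z_2^{-k}$ with $k\ge 3$ one has in the $U_2$-chart $\Phi\,dz_1\wedge dz_2=z^{k-3}\,dz\wedge dw$, and $g=z^{k-3}\,dz\wedge dw$ is globally holomorphic on $\C\times D$ with $\tilde h=h-g$ supported in bounded $|z|$, hence in $L^2$; so your $f$ is exact. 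A choice such as $\Phi=z_2^{-2}$ does obstruct an $L^2$ primitive, but then the truncation argument for $f\in\overline{\mathrm{Range}(\dbar)}$ fails: with standard or logarithmic cutoffs one computes $\|\dbar\chi_n\wedge h\|_{L^2_{2,1}}\to\infty$. Both halves of your plan therefore need substantive new work, which the paper's $W^1$-cohomology route sidesteps entirely.
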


\begin{corollary}\label{co:non Hausdorff}  $H^{2,1}_{L^2}(\hb^+)$ is non-Hausdorff.   
\end{corollary}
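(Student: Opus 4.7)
The plan is to deduce Corollary \ref{co:non Hausdorff} immediately from Theorem \ref{th:Nonclosed 21}, so the entire substance is packed into the theorem; I outline this short deduction below, then comment on where the real difficulty lies.

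First, recall that the $L^2$ Dolbeault cohomology is defined as the quotient
\[
H^{2,1}_{L^2}(\hb^+) = \frac{\{f \in L^2_{2,1}(\hb^+) \mid \dbar f = 0\}}{\dbar\bigl(L^2_{2,0}(\hb^+)\bigr)},
\]
where $\dbar$ is taken as the weak maximal $L^2$ closure of the Cauchy--Riemann operator. The numerator is a closed subspace of $L^2_{2,1}(\hb^+)$ because $\dbar$ is a closed operator, so its kernel is closed. The quotient then inherits the quotient topology from the Hilbert norm, and the standard fact about quotients of a Hilbert space by a linear subspace says that this quotient is Hausdorff if and only if the subspace being quotiented out, here $\dbar\bigl(L^2_{2,0}(\hb^+)\bigr)$, is closed in the ambient space, equivalently closed in $L^2_{2,1}(\hb^+)$.

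Second, Theorem \ref{th:Nonclosed 21} asserts exactly that $\dbar\bigl(L^2_{2,0}(\hb^+)\bigr)$ is \emph{not} closed in $L^2_{2,1}(\hb^+)$. Combining the two gives that $H^{2,1}_{L^2}(\hb^+)$ fails to be Hausdorff, which is Corollary \ref{co:non Hausdorff}. Note that this is strictly stronger than the infinite dimensionality of Lemma \ref{le:holo 20}(3): a Hausdorff infinite-dimensional cohomology would still be an acceptable Banach-space quotient, whereas non-Hausdorff means the zero class is not closed, i.e.\ some non-exact $\dbar$-closed form is the $L^2$-limit of exact ones.

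The genuine work is in Theorem \ref{th:Nonclosed 21}. Since $\ker\bigl(\dbar \colon L^2_{2,0}(\hb^+) \to L^2_{2,1}(\hb^+)\bigr) = H^{2,0}_{L^2}(\hb^+) = 0$ by Lemma \ref{le:holo 20}(1), closed range in this degree would be equivalent to a Poincar\'e-type bound $\|u\| \le C\|\dbar u\|$ for every $u \in L^2_{2,0}(\hb^+)$. One refutes it by producing a sequence $u_n \in L^2_{2,0}(\hb^+)$ with $\|u_n\|$ bounded below and $\|\dbar u_n\|_{L^2_{2,1}} \to 0$, typically by exploiting the non-Lipschitz corner $[1:0:0]$ of $\hb^+$, where in the chart $U_0$ the domain looks like the classical Hartogs triangle $\{|\tilde z|<|\tilde w|\}$ and admits Laurent-type holomorphic data (e.g.\ negative powers of $\tilde w$) that is barely $L^2$ near the singularity. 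That construction is the main obstacle, but it enters the proof of Theorem \ref{th:Nonclosed 21}, not of the corollary itself.
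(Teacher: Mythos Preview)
Your deduction of the corollary from Theorem \ref{th:Nonclosed 21} is correct and matches the paper's own proof exactly: the paper simply cites the standard equivalence (Treves \cite{Treves67}) between closed range of $\dbar$ and Hausdorffness of the quotient cohomology, which is precisely what you spell out in your first two paragraphs.

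One remark on your final paragraph, though it is tangential to the corollary itself. You speculate that Theorem \ref{th:Nonclosed 21} is proved by an explicit sequence $u_n \in L^2_{2,0}(\hb^+)$ with $\|u_n\|$ bounded below and $\|\dbar u_n\| \to 0$, built from Laurent-type data near the corner. The paper does \emph{not} do this. Instead it argues by contradiction through duality: assuming closed range, Lemma \ref{le:L2 W1 H} (an $L^2$--Serre--duality and extension argument) yields $H^{0,1}_{W^1}(\hb^-)=0$, hence by the symmetry $\hb^+ \leftrightarrow \hb^-$ also $H^{0,1}_{W^1}(\hb^+)=0$; then Lemma \ref{le:W1 separate} (a Henkin--Iordan style Riemann-surface construction) shows this would force $W^1(\hb^+)\cap\mathcal{O}(\hb^+)$ to separate points, contradicting Lemma \ref{le:holoL2}(2). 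So the argument is indirect and global rather than a local sequence construction; your proposed approach may well work, but it is a genuinely different strategy.
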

\begin{proof} We will show that the  corollary follows easily from the theorem. 
It is well-known that     $\dbar  :L^2_{2,0}(\hb^+)\to L^2_{2,1}(\hb^+)$   has closed range if and only if 
 $H^{2,1}_{L^2}(\hb^+)$ is Hausdorff (see  e.g.,   Treves \cite{Treves67}).    
 \end{proof}

 It remains to prove Theorem \ref{th:Nonclosed 21}. To prove the theorems, we need two lemmas.

\begin{lemma}\label{le:L2 W1 H} The following are equivalent:
\begin{enumerate} 
\item  $\dbar  :L^2_{2,0}(\hb^+)\to L^2_{2,1}(\hb^+)$   has closed range. 

\item $\dbar_c: L^2_{0,1}(\hb^+)\to L^2_{0,2}(\hb^+)$ has closed range and the range is $L^2_{0,2}(\hb^+)$. 

\item 
  $H^{0,1}_{W^1}(\hb^-)=0.$

  \end{enumerate}
  \end{lemma}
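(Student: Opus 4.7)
The plan is to combine three ingredients: (i) Lemma~\ref{le:dbar star c} converts (1) into a $\dbar_c$ statement; (ii) Lemma~\ref{le:extension hartogs cp2} lets me transfer $L^2$- and $W^1$-forms between $\hb^\pm$ and $\CP^2$; (iii) Hodge theory on $\CP^2$ provides canonical solutions of $\dbar$ that gain one derivative, since $H^{0,1}(\CP^2)=H^{0,2}(\CP^2)=0$. For $(1) \iff (2)$, I apply Lemma~\ref{le:dbar star c} with $p=2$, $q=1$, $n=2$, $t=0$ to equate (1) with closed range of $\dbar_c\colon L^2_{0,1}(\hb^+)\to L^2_{0,2}(\hb^+)$. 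Since every $(0,2)$-form on a complex $2$-manifold is automatically $\dbar$-closed, $H^{0,2}_{\dbar_c,L^2}(\hb^+)=L^2_{0,2}(\hb^+)/\text{Range}(\dbar_c)$; under the closed-range assumption this quotient is Hausdorff, and $L^2$-Serre duality (as in \cite{ChakrabartiShaw12}) combined with Lemma~\ref{le:holo 20}(1) gives $H^{0,2}_{\dbar_c,L^2}(\hb^+)\cong H^{2,0}_{L^2}(\hb^+)^*=0$. Thus closed range of $\dbar_c$ automatically produces surjectivity, so (1) and (2) coincide.

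For $(2)\Rightarrow(3)$, I start with a $\dbar$-closed $f\in W^1_{0,1}(\hb^-)$ and extend via Lemma~\ref{le:extension hartogs cp2} to $\tilde f\in W^1_{0,1}(\CP^2)$. Then $\dbar\tilde f\in L^2_{0,2}(\CP^2)$ vanishes on $\hb^-$, so its restriction $g:=\dbar\tilde f|_{\hb^+}$ belongs to $L^2_{0,2}(\hb^+)$; by (2) there exists $u\in L^2_{0,1}(\hb^+)$ with $\dbar_c u=g$, and extending by zero gives $\tilde u\in L^2_{0,1}(\CP^2)$ satisfying $\dbar\tilde u=\dbar\tilde f$ in the distribution sense. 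Hence $\tilde f-\tilde u$ is $\dbar$-closed in $L^2_{0,1}(\CP^2)$, and since $H^{0,1}(\CP^2)=0$ the canonical solution $h=\dbarstar N(\tilde f-\tilde u)$ lies in $W^1(\CP^2)$. Restricting to $\hb^-$, where $\tilde u\equiv 0$, yields $h|_{\hb^-}\in W^1(\hb^-)$ with $\dbar h=f$, establishing (3).

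For $(3)\Rightarrow(2)$ I reverse the construction: given $g\in L^2_{0,2}(\hb^+)$ extended by zero to $\tilde g\in L^2_{0,2}(\CP^2)$, the canonical solution $G$ of $\dbar G=\tilde g$ lies in $W^1_{0,1}(\CP^2)$, so $G|_{\hb^-}$ is $\dbar$-closed in $W^1$. By (3) there is $v\in W^1(\hb^-)$ with $\dbar v=G|_{\hb^-}$; extending to $\tilde v\in W^1(\CP^2)$ via Lemma~\ref{le:extension hartogs cp2}, the form $u:=G-\dbar\tilde v\in L^2_{0,1}(\CP^2)$ vanishes on $\hb^-$ and satisfies $\dbar u=\tilde g$. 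The weak-strong equivalence of $\dbar$ on the non-Lipschitz domain $\hb^+$ (Theorem~3.13 of \cite{BFLS22}) then identifies $u|_{\hb^+}$ as an element of $\dom(\dbar_c)$ with $\dbar_c(u|_{\hb^+})=g$, giving (2). The main technical obstacles are the \emph{global} (not merely interior) $W^1$-regularity of the canonical solutions on $\CP^2$---needed so that restrictions to $\hb^-$ retain $W^1$-control up to the singular boundary point---and the weak-strong equivalence on the non-Lipschitz domain $\hb^+$ invoked from \cite{BFLS22}.
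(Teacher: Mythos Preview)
Your proposal is correct and follows essentially the same approach as the paper's proof: the equivalence $(1)\iff(2)$ via Lemma~\ref{le:dbar star c} and $L^2$ Serre duality with Lemma~\ref{le:holo 20}(1), the implication $(2)\Rightarrow(3)$ via Sobolev extension of $f$, solving $\dbar_c$ on $\hb^+$, and then solving $\dbar$ on $\CP^2$ with a $W^1$ gain, and the implication $(3)\Rightarrow(2)$ by reversing this construction. You make explicit two points the paper leaves implicit---the global $W^1$ regularity of the canonical solution on the compact K\"ahler manifold $\CP^2$, and the invocation of the weak--strong equivalence on $\hb^+$ from \cite{BFLS22} in the last step---but the argument is otherwise line-by-line the same.
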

  \begin{proof}

   It  follows from Lemma \ref{le:dbar star c} that  $\dbar_c: L^2_{0,1}(\hb^+)\to L^2_{0,2}(\hb^+)$ has closed range if and only if $\dbar  :L^2_{2,0}(\hb^+)\to L^2_{2,1}(\hb^+)$   has closed range. 
  Using (1) in  Lemma \ref{le:holo 20} and  $L^2$ Serre duality, we have 
  and 
  \begin{equation}\label{eq:dbar_c 02 hb}H^{0,2}_{\dbar_c, L^2}(\hb^+)\cong  H^{2,0}_{L^2}(\hb^+)=0.\end{equation}
  This proves that the range of $\dbar_c$  is $L^2_{0,2}(\hb^+)$. We have proved that (1) implies (2). Thus (1) and (2) are equivalent. 
  
  Next we  prove that (2) implies (3). 
   Since $\hb^-$ is an extension domain by Lemma \ref{le:extension hartogs cp2}, let $\tilde f$ be an extension of $f$ to $W^1_{0,1}(\CP^2)$.   Let $f_c=\dbar \tilde f$. Then $f_c\in L^2_{0,2}(\hb^+)$. Using  (2),  there exists 
$u_c\in L^2_{0,1}(\hb^+)$ such that $\dbar_c u_c =f_c$ in $\CP^2$. Letting $F= \tilde f-u_c$. Then $F\in L^2_{0,1}(\CP^2)$,  $\dbar F=0$ in $\CP^2$ and $F=f$ in $\Omega^+$. 
Any $\dbar$-closed  $f\in W^1_{0,1}(\Omega^+)$ extends to be an $L^2$ $\dbar$-closed form $F$  in $\CP^2$. So $F=\dbar U$ with $U$ in $W^1(\CP^2)$. Letting $u=U\mid_{\Omega^-}$, then $u\in W^1(\hb^-)$ and $\dbar u=f$. We have proved (3).

  Finally, we   prove that (3) implies (2).  Let $f\in L^2_{0,2}(\hb^+)$. Then there exists $V\in W^1_{0,1}(\CP^2)$ such that $\dbar V=f$ in $\CP^2$. We set $v=V|_{\hb^-}$.  Using (3), we there exists $u\in W^1(\hb^-)$ such that  $\dbar u=v$ on $\hb^-$.
  Let  $\tilde u\in W^1(\CP^2)$ be an extension of $u$.

  We define 
  $$u_c= V-\dbar \tilde u.$$
  Then $\dbar u_c =f$ in $\CP^2$ and $u_c=0$ on $\hb^-$. This proves (2). The lemma is proved. 
  \end{proof}  
  
   \begin{lemma}\label{le:W1 separate}   Suppose $H^{0,1}_{W^1}(\hb^+)=0$. Then  functions in $W^1(\hb^+)\cap \mathcal O(\hb^+)$ separate points in $\hb^+$.
 \end{lemma}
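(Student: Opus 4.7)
The plan is to construct, for any two distinct $p, q \in \hb^+$, a function $F \in W^1(\hb^+) \cap \mathcal{O}(\hb^+)$ with $F(p) \neq F(q)$, by using the solvability hypothesis to upgrade a separating Bergman function to the Sobolev regime. By Lemma \ref{le:holoL2}(1), pick $g \in L^2(\hb^+) \cap \mathcal{O}(\hb^+)$ with $g(p) \neq g(q)$. For a cutoff $\chi \in C^\infty_c(\hb^+)$ equal to $1$ on a compact neighborhood $V$ of $\{p, q\}$, the product $g\chi$ lies in $W^1(\hb^+)$ (it has compact support in $\hb^+$, and $g$ is smooth on $\mathrm{supp}\,\chi$ by interior holomorphicity), and $\dbar(g\chi) = g\,\dbar\chi$ is a smooth, compactly supported, $\dbar$-closed $(0,1)$-form in $W^1_{0,1}(\hb^+)$. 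The hypothesis $H^{0,1}_{W^1}(\hb^+)=0$ supplies $u \in W^1(\hb^+)$ with $\dbar u = g\,\dbar\chi$, and I set $F := g\chi - u \in W^1(\hb^+) \cap \mathcal{O}(\hb^+)$.

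Because $\dbar u = g\,\dbar\chi = 0$ on $V$, the function $u$ is holomorphic and hence smooth near both $p$ and $q$, so $u(p), u(q)$ are well-defined and so is $F$ at those points. A direct calculation gives
\[
F(p) - F(q) = \bigl(g(p) - g(q)\bigr) - \bigl(u(p) - u(q)\bigr),
\]
and the proof reduces to arranging the right-hand side to be nonzero. Since the solution $u$ is only unique modulo $W^1(\hb^+) \cap \mathcal{O}(\hb^+)$, the quantity $u(p)-u(q)$ can only be shifted by $h(p)-h(q)$ with $h\in W^1\cap\mathcal{O}$; if the holomorphic Sobolev space already separates $p,q$ one is done trivially, but in general this rigidity is the main obstacle.

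To bypass this, I would make $\|g\,\dbar\chi\|_{W^1(\hb^+)}$ small. Construct a sequence $\chi_k \in C^\infty_c(\hb^+)$ with $\chi_k \equiv 1$ on $V$, compact supports exhausting $\hb^+$ (in local coordinates $(z,w)$, truncating at $|z|\le R_k\to\infty$ and $|w|\le 1-1/k$ with smooth transitions of thickness $h_k\to\infty$), chosen so that $\|g\,\dbar\chi_k\|_{W^1(\hb^+)}\to 0$. The open mapping theorem applied to the continuous surjection $\dbar\colon W^1(\hb^+)\to \ker\dbar\cap W^1_{0,1}(\hb^+)$ supplied by the hypothesis then yields solutions $u_k$ with $\|u_k\|_{W^1}\le C\|g\,\dbar\chi_k\|_{W^1}\to 0$. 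Since each $u_k$ is holomorphic on the fixed neighborhood $V$ of $\{p,q\}$, interior subharmonic/Bergman estimates give $|u_k(p)|,|u_k(q)|\lesssim \|u_k\|_{L^2(V)}\le \|u_k\|_{W^1}\to 0$, so $F_k(p)-F_k(q)\to g(p)-g(q)\ne 0$; hence $F_k$ separates $p,q$ for all sufficiently large $k$. The principal technical difficulty is verifying that such cutoffs $\chi_k$ exist with $\|g\,\dbar\chi_k\|_{W^1}\to 0$ in the Fubini-Study metric — using that $|g|^2\,dV_\omega$ is a finite measure whose mass on $\mathrm{supp}(\dbar\chi_k)$ vanishes and that $|\dbar\chi_k|_\omega\lesssim h_k^{-1}$, $|\partial\dbar\chi_k|_\omega\lesssim h_k^{-2}$ — which is particularly delicate near the singular boundary point $[1:0:0]$ where the Fubini-Study metric degenerates.
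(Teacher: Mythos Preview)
Your strategy has a genuine gap at the step where you claim one can choose cutoffs $\chi_k\in C^\infty_c(\hb^+)$ with $\|g\,\dbar\chi_k\|_{W^1}\to 0$. In the coordinates $(z,w)$ with $\hb^+=\C\times D$, any compactly supported $\chi_k$ must cut off near the \emph{smooth} boundary piece $\{|w|=1\}$, and there the transition width in the Fubini--Study metric is bounded above by the finite diameter of $\hb^+$; hence $|\dbar\chi_k|_\omega$ is bounded \emph{below} on that part of the support, not by $h_k^{-1}$. Concretely, if $\psi_k$ transitions on $1-2\epsilon_k<|w|<1-\epsilon_k$, then $|\dbar\psi_k|_\omega^2\sim(1+|z|^2)\epsilon_k^{-2}$ and a direct computation gives $\|g\,\dbar\psi_k\|_{L^2}^2\gtrsim \epsilon_k^{-1}$ whenever $g$ does not vanish identically near $\{|w|=1\}$---which a nonzero holomorphic $g$ cannot. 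Equivalently: if your scheme worked, the sequence $g\chi_k$ would converge to $g$ in the graph norm of $\dbar$ with compact supports, forcing $g\in\mathrm{Dom}(\dbar_c)$, hence $g$ would have vanishing trace on the smooth Levi-flat part of $b\hb^+$ and therefore $g\equiv 0$. So the obstacle you flag as ``delicate near $[1{:}0{:}0]$'' is in fact fatal already near the smooth boundary, and no choice of $h_k$ or $R_k$ can rescue it.

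The paper circumvents the uncontrolled term $u(p)-u(q)$ by a different, classical device: it passes a degree-two curve $S=\{P=0\}$ through $a,b\in\hb^+$ (and a third point in $\hb^-$), picks a holomorphic $h$ on a Stein neighborhood of $S\cap\overline{\hb^+}$ with $h(a)\neq h(b)$, and sets $\alpha=(\dbar\chi)h/P$, which is smooth on $\overline{\hb^+}$ since $\dbar\chi$ vanishes near $S$. After solving $\dbar u=\alpha$ with $u\in W^1$, the function $F=\chi h-Pu$ is holomorphic in $W^1$ and satisfies $F(a)=h(a)$, $F(b)=h(b)$ \emph{regardless of $u$}, because $P(a)=P(b)=0$. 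The multiplication by $P$ is precisely what absorbs the ambiguity in $u$ that your argument cannot control.
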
 
 \begin{proof}  The proof is similar to the proof of Proposition 4.3 in \cite{HenkinIordan00}.  Let $a$ and $b$ be two distinct points in $\hb^+$ and let $c\in \hb^-$. There exists a Riemann surface $S$ of degree 2 passing through $a,\ b,\ c$. The Riemann surface is given by  $S=\{[Z]=[Z_1,Z_2,Z_3]\mid P(Z) =0\}$,  where $P(Z)$ is a   homogeneous polynomial of degree 2 in $[Z]$. We can choose $c$ such that $dP\neq 0$ on $S$. Let $\widetilde \hb^+$ be an open neighborhood of $\overline \hb^+$ and $c\notin \tilde \hb^+$.  
 The Riemann surface $S\cap \widetilde \hb^+$ is Stein and there exists a Stein neighborhood $U$ of $S \cap \widetilde \hb^+$ such that  $U$ is Stein (see \cite{GunningRossi}).  Let $h$ be a holomorphic function in $U$ such that $h(a)\neq h(b)$. 
 
 Let $\chi$ be a function in $C^\infty_0(U)$ such that $\chi=1$ in a neighborhood of $S\cap \widetilde \hb^+$. Consider the $(0,1)$-form
 $$\alpha =\frac {(\dbar \chi )h}{P}.$$
 Then $\alpha\in C^\infty_{0,1}(\overline \hb^+)$. Using the assumption that $H^{0,1}_{W^1}(\hb^+)=0$, there exists $u\in W^1(\hb^+)$ such that 
  $\dbar u= \alpha$ in $\hb^+$. Let 
  $$F=\chi h - P u.$$
  Then $F\in W^1(\hb^+)\cap \mathcal O(\hb^+)$. Furthermore,  $F(a)=h(a)\neq  h(b)=F(b).$  The lemma is proved. 
  \end{proof}

\subsection{Proof of Theorem \ref{th:Nonclosed 21}}
 
   Suppose that  $\dbar  :L^2_{2,0}(\hb^+)\to L^2_{2,1}(\hb^+)$   has closed range. Using Lemma \ref{le:L2 W1 H},  
   $$H^{0,1}_{W^1}(\hb^+)=0.$$
    It follows from Lemma \ref{le:W1 separate} that  holomorphic functions in $W^1(\hb^+)$ separate points. This is a contradiction
  to (2) in Lemma \ref{le:holoL2}.
  Theorem \ref{th:Nonclosed 21} is proved. \qed
  
  \bigskip

  We have also proved the following corollary.
  
  \begin{corollary}\label{co:H 21 W1}  $H^{0,1}_{W^1}(\hb^+)\neq 0.$
  \end{corollary}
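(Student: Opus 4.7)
The plan is a brief contradiction argument that repackages the work already done in the proof of Theorem~\ref{th:Nonclosed 21}. The key machinery is already in place: Lemma~\ref{le:W1 separate} converts the vanishing of $H^{0,1}_{W^1}(\hb^+)$ into a point-separation statement for $W^1$-holomorphic functions, while Lemma~\ref{le:holoL2}(2) asserts that $W^1(\hb^+)\cap\mathcal O(\hb^+)$ fails to separate points of $\hb^+$. These two facts cannot coexist unless the hypothesis of Lemma~\ref{le:W1 separate} fails.

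Concretely, the plan is to assume for contradiction that $H^{0,1}_{W^1}(\hb^+)=0$. Applying Lemma~\ref{le:W1 separate} then forces $W^1(\hb^+)\cap\mathcal O(\hb^+)$ to separate points of $\hb^+$, in direct contradiction to Lemma~\ref{le:holoL2}(2). Hence $H^{0,1}_{W^1}(\hb^+)\ne 0$, which is the assertion of the corollary.

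As an alternative route, the conclusion can be read off from Theorem~\ref{th:Nonclosed 21} via Lemma~\ref{le:L2 W1 H}: the failure of closed range for $\dbar:L^2_{2,0}(\hb^+)\to L^2_{2,1}(\hb^+)$ is equivalent to the non-vanishing of the relevant first-order Sobolev Dolbeault cohomology, with the roles of $\hb^+$ and $\hb^-$ interchangeable because the two triangles are biholomorphic to each other (the map swaps the coordinates $Z_1$ and $Z_2$) and their singularities and Sobolev properties match. Either derivation is very short.

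The main obstacle here is not analytic but bookkeeping: one has to confirm that the lemmas are invoked with the correct triangle (\,$\hb^+$ versus $\hb^-$\,), so that the contradiction lines up without ambiguity. Since Lemma~\ref{le:extension hartogs cp2}, Lemma~\ref{le:holoL2}(2) and Lemma~\ref{le:W1 separate} all apply verbatim to $\hb^+$, no further input is needed. No new estimates are required; the corollary is an immediate consequence of the previously established structural results.
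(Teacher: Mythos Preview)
Your proposal is correct and matches the paper's approach. The paper gives no separate proof of the corollary, stating only ``We have also proved the following corollary'' immediately after the proof of Theorem~\ref{th:Nonclosed 21}; the implicit argument is precisely your first route---assuming $H^{0,1}_{W^1}(\hb^+)=0$, invoking Lemma~\ref{le:W1 separate} to get point separation by $W^1$-holomorphic functions, and then contradicting Lemma~\ref{le:holoL2}(2). Your observation about the $\hb^+/\hb^-$ bookkeeping (and their biholomorphism via swapping $Z_1,Z_2$) is apt, since Lemma~\ref{le:L2 W1 H}(3) is stated for $\hb^-$ while the paper applies it to $\hb^+$ in the proof of Theorem~\ref{th:Nonclosed 21}.
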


 \section{Open problems}\label{sec:Open Problems} 
There are numerous open problems concerning $\dbar$ on domains in $\C^n$ and $\CP^n$. 
In this section we  list only a few   open  problems which are related to this article.

\begin{problem}[\textbf{Sobolev Estimates for $\dbar$ on Pseudoconvex domains  $\CP^n$}]\label{prob:Sobolev CPn}
Let $\Omega$ be a bounded pseudoconvex domain in $\CP^n$ with smooth boundary. Can one solve
 $\dbar$ with  $W^s$ estimates for all $s>0$? 
 In other words,  prove (or disprove) 
\begin{equation}\label{eq:H^s problem}H^{p,q}_{W^s}(\Omega)=0.\end{equation}
  
\end{problem}

We remark  that by Corollary \ref{co:H 21 W1}, some smoothness of $\Omega$ must be assumed. 
 When $\Omega$ is a bounded pseudoconvex  domain in $\C^n$ with smooth boundary, this is exactly Kohn's therorem (Theorem \ref{th:Kohn main}).

 \begin{problem}[\textbf{$L^2$ Existence  for $\dbar$ on Pseudoconcave domains  $\CP^n$}]\label{prob:L2 pseudoconcave}
Let $\Omega$ be a bounded pseudoconvex domain in $\CP^n$ with smooth (or Lipschitz) boundary and let $\Omega^+= \CP^n\setminus \overline \Omega$. Prove (or disprove) that  
 $$  H^{p,q}_{L^2}(\Omega^+)=0$$
 if $p\neq q$ and $q<n-1$. 
  
\end{problem}

 \begin{problem}\label{prop:Lip W1} Let $\Omega$ be a bounded pseudoconvex domain in $\C^n$ with Lipschitz boundary.  Determine if 
 $$H^{0,1}_{W^1}(\Omega)=0.$$
 \end{problem} 
 In other words, can one extend Kohn's results for $s=1$  (Theorem \ref{th:Kohn main})  to Lipschitz pseudoconvex domains?  This.question has been raised many years ago. It has been shown that $W^1$ estimates hold for pseudoconvex domains with $C^2$ boundary  (see \cite{Harrington07}). But it remains unsolved for general Lipschitz domains.  When
 the domain is the bidisk $D\times D$, this is proved rather recently (see \cite{CLS18}).  One can also ask similar questions for the Hartogs triangle in $\C^2$
 (see Problem \ref{prob:T Hausdorff} and Problem \ref{prob:T W1}).

\bigskip

Let $\Omega$ be a bounded   domain in $\CP^n$ with smooth  boundary. Let  $0\le p\le n$ and $1\le q\le n-1$. 
 Consider the induced operator $\dbar_b:L^2_{p,q-1}(b\Omega)\to L^2_{p,q}(b\Omega)$, the tangential Cauchy-Riemann operator on $b\Omega$. 
 
Let  $\vartheta_b : L^2_{p,q}(b\Omega)\to L^2_{p,q-1}(b\Omega)$ be the adjoint operatorr of $\dbar_b$ with respect to the Fubini-Study metric. Let
$$\square_b=\dbar_b\vartheta_b+\vartheta_b\dbar_b:L^2_{p,q}(b\Omega)\to L^2_{p,q}(b\Omega)$$
be the $\dbar_b$-Laplacian (or Kohn-Rossi operator).  
 
 \begin{problem}[\textbf{Kohn-Rossi Cohomology}]
Let $\Omega$ be a bounded pseudoconvex domain in $\CP^n$ with smooth  boundary.  Does $\dbar_b:L^2_{p,q-1}(b\Omega)\to L^2_{p,q}(b\Omega)$ have closed range   where $0\le p\le n$ and $1\le q\le n-1$?

If $\dbar_b$ has closed range for all degrees, show that for $1\le q<n-1$, 
  show that  the dimension of the Kohn-Rossi cohomology vanishes, i.e., 
  $$\dim_\C H^{p,q}_{\dbar_b, L^2}(b\Omega)=\dim_\C\text{Ker}(\square_b^{p,q})=0.$$ 
  If not, what is the dimension of the Kohn-Rossi cohomology $\dim_\C H^{p,q}_{\dbar_b, L^2}(b\Omega)$?

\end{problem}

We remark the following results are known: 
 If $\Omega$ is strongly pseudoconvex,  then we have that $\dbar_b$ has closed range following  the results of Kohn-Rossi \cite{KohnRossi65}.
 
   If $\Omega$ is a bounded pseudoconvex domain with smooth boundary in $\C^n$, the closed range property and $L^2$ existence  for $\dbar_b$ is proved in \cite{Shaw85Invent} for $q<n-1$ and in \cite{BoasShaw86,Kohn86} for $q=n-1$. 
In this case, we have that the Kohn-Rossi cohomology vanishes for $q<n-1$. 
 
 .

 \begin{problem}\label{prob:H 11 Hausdorff} 
 Determine if the $L^2$ Dolbeault cohomology on the Hartogs triangles for (1,1)-form  $H^{1,1}_{L^2}(\hb^\pm)$ satisfies
 
\begin{enumerate}
 \item $H^{1,1}_{L^2}(\hb^\pm)$ is Hausdorff,
 \item $H^{1,1}_{L^2}(\hb^\pm)=0.$
 \end{enumerate}

 \end{problem} 
 
 We  have proved that $H^{2,1}_{L^2}(\hb^\pm)$ is non-Hausdorff in Theorem \ref{th:Nonclosed 21}. We also know that $H^{0,1}_{L^2}(\hb^\pm)=0.$. It remains to investigate the closed range property for $(1,1)$-form.

\begin{problem}\label{prob:T Hausdorff} Let  $T$ be the Hartogs triangle in $\C^2$.

\begin{enumerate}
\item Let  $B$ be a ball of radius 2 centered at 0. Determine if $H^{0,1}(B\setminus \overline T)$ is Hausdorff. 
\item Determine the spectrum of the $\dbar$-Neumann operatoor on $T$,
\item Determine the spectrum of the $d$-Neumann operator,
\end{enumerate}
\end{problem}
This problem is raised in \cite{BFLS22}.   Since $T$ is pseudoconvex and bounded, H\"ormander's $L^2$ existence theorem  holds for $T$. We have 
 $$H^{p,1}_{L^2}(T)=0\qquad \text{for all }0\le p\le 2.$$
  
Notice that $H^{0,1}(B\setminus \overline T)$ is Hausdorff is equivalent to $H^{2,1}(B\setminus \overline T)$ is Hausdorff.    This question is equivalent to the following question (see  \cite{BFLS22} and  also \cite{LaurentShaw13}). 
 
\begin{problem}\label{prob:T W1} Determine if 
  $$H^{0,1}_{W^1}(T)=0.$$   
 \end{problem}
It  is proved in a recent paper \cite{PanZhang}  that 
 $$H^{0,1}_{W^{k,p}}(T)=0,\qquad k\in \N,  \ p>  4.$$
 However, it is still not known if this holds  for $W^{1,2}(T)=W^1(T).$
 By Corollary   \ref{co:H 21 W1}, we  have    $$H^{0,1}_{W^1}(\hb^+)\neq 0.$$

There are numerous other interesting problems on $\dbar$ which are yet to be understood. 
 We  list  only these few  problems  to highlight  the   importance of understanding the $L^2$-Sobolev theory for $\dbar$ on domains in  complex manifolds.
 
\bibliography{survey}
\providecommand{\bysame}{\leavevmode\hbox
	to3em{\hrulefill}\thinspace}

\end{document}